\documentclass[onefignum,onetabnum]{siamart190516}

\usepackage{lipsum}
\usepackage{amsfonts}
\usepackage{graphicx}
\usepackage{epstopdf}
\usepackage{algorithmic}
\usepackage{mathrsfs}
\usepackage{bm}
\usepackage{algorithm}
\usepackage{float}
\usepackage{comment}

\newcommand{\E}{\mathbb{E}}

\newcommand{\R}{\mathbb{R}}

\newcommand{\x}{\boldsymbol{x}}

\newcommand{\uu}{\boldsymbol{u}}

\ifpdf
  \DeclareGraphicsExtensions{.eps,.pdf,.png,.jpg}
\else
  \DeclareGraphicsExtensions{.eps}
\fi

\usepackage{enumitem}
\setlist[enumerate]{leftmargin=.5in}
\setlist[itemize]{leftmargin=.5in}

\newcommand{\eps}{\varepsilon}
\newsiamremark{Hypothesis}{Hypothesis}
\crefname{hypothesis}{Hypothesis}{Hypotheses}
\newsiamthm{Claim}{Claim}
\newsiamremark{remark}{Remark}
\newsiamremark{Definition}{Definition}
\newsiamthm{Corollary}{Corollary}
\newsiamthm{assumption}{Assumption}
\newsiamthm{Theorem}{Theorem}
\newsiamthm{Proposition}{Proposition}
\newsiamthm{Lemma}{Lemma}

\headers{QMC for SDE Simulation}{Du Ouyang, Zexin Pan, and Zhijian He}

\title{Quasi-Monte Carlo for SDE Simulation: Error Analysis and Dimensionality Reduction\thanks{Submitted to the editors DATE.
\funding{}}}

\author{Du Ouyang\thanks{Department of Mathematical Sciences, Tsinghua University 
  (\email{duouyang99@outlook.com}).  } \and Zexin Pan\thanks{Institute of Fundamental and Transdisciplinary Research, Zhejiang University, Zhejiang 310058, People's Repulic of China
  (\email{zep002@zju.edu.cn}).} \and  Zhijian He\thanks{Corresponding author. School of Mathematics, South China University of Technology, Guangzhou 510641, People's Repulic of China (\email{hezhijian@scut.edu.cn}).}
}

\usepackage{amsopn}

\ifpdf
\hypersetup{
  pdftitle={QMC for SDE Simulation},
  pdfauthor={D. Ouyang, Z. Pan, and Z. He}
}
\fi

\newcommand{\dd}{\mathrm{d}}

\newcommand{\p}{\mathbb{P}}
\newcommand{\1}{\mathbf{1}}

\begin{document}

\maketitle

% REQUIRED
\begin{abstract}
We investigate the numerical simulation of general stochastic differential equations (SDEs) using Quasi-Monte Carlo (QMC) methods. First, we provide a rigorous theoretical analysis of the QMC method applied to the Euler-Maruyama (EM) scheme, establishing that it significantly accelerates the decay of the sampling error and achieves an asymptotically superior convergence rate over the classical Monte Carlo method. Second, the traditional EM scheme exhibits a slow polynomial decay of the discretization error, which necessitates a large number of time steps and leads to a significantly high integration dimension. To address this issue, we propose a Multilevel Stochastic Time Grid (MSTG) method based on Exact Simulation techniques, and we rigorously establish its convergence rate under randomized QMC sampling, proving that it preserves the high-order convergence of the sampling error. In terms of the overall error, the truncation error of the proposed MSTG method exhibits a remarkably fast super-exponential decay. Consequently, to achieve a given accuracy level, our approach requires significantly fewer discretization steps than the EM scheme, thereby drastically reducing the actual integration dimension of the QMC method. This substantial dimensionality reduction strategy greatly enhances the practical efficiency of the QMC algorithm. Numerical experiments fully corroborate the superiority of the proposed approach.
\end{abstract}

% REQUIRED
\begin{keywords}
  Stochastic differential equations, Quasi-Monte Carlo, Exact simulation, Dimension reduction
\end{keywords}

% REQUIRED
\begin{AMS}
  
\end{AMS}

\section{Introduction}
\label{sec:sde_qmc_review}

Stochastic differential equations (SDEs) serve as the standard mathematical language for characterizing the evolution of continuous-time dynamical systems driven by random noise. Since the vast majority of nonlinear SDEs lack exact analytical solutions, their numerical approximations are indispensable for interdisciplinary applications. This field serves as the theoretical cornerstone for classical stochastic optimal control and computational finance (e.g., derivative pricing, risk management \cite{glasserman2004,YZbook}). Furthermore, it has been broadly applied in statistical mechanics, computational biology, and rapidly developing fields of artificial intelligence, such as generative diffusion models \cite{song2021ScoreBased} and continuous-time reinforcement learning \cite{wang2020a}. To systematically elucidate the construction of numerical methods, we consider the following general $d$-dimensional SDE:
\begin{equation}
\mathrm{d}X_t = b(t, X_t)\mathrm{d}t + \sigma(t, X_t)\mathrm{d}W_t, \quad t \in [0, T].
\end{equation}

In practical scientific computing, the primary objective is typically to evaluate the expectation of a specific functional of the terminal system state, denoted as $\mathbb{E}[g(X_T)]$. Traditionally, this expectation is approximated using time-grid-based discretization schemes, such as the Euler-Maruyama (EM) method \cite{kloeden1992numerical}, combined with Monte Carlo (MC) simulation. However, the overall Root Mean Square Error (RMSE) of the EM-MC estimator is bounded by $\mathcal{O}(N^{-1}) + \mathcal{O}( m^{-1/2})$, where $N$ is the number of time steps and $m$ is the sample size. Achieving high accuracy necessitates massive sample sizes and highly refined time grids, leading to prohibitive computational costs.

To accelerate convergence, a natural idea is to apply the quasi-Monte Carlo (QMC) method to SDE simulations by substituting random sequences with low-discrepancy sequences to generate Brownian motion increments. However, existing research regarding QMC applications in SDEs (e.g., \cite{giles2009Multilevela}) predominantly focuses on empirical analysis, with scant literature rigorously addressing the convergence rate of QMC for the SDE simulation.

In practice, each simulation step of an SDE requires mapping uniformly distributed samples to normally distributed ones via the inverse cumulative distribution function. This implies that the integrand is inherently unbounded at the boundaries of the integration domain. The classical Koksma-Hlawka inequality requires the integrand to have Bounded Variation in the sense of Hardy and Krause (BVHK), a condition typically violated in SDE simulations. The results in \cite{owen2006a,ouyang2024achieving} regarding QMC integration for unbounded functions precisely bridges this theoretical gap. Based on these results, we proved that, provided the integrand (here, the numerical solution of the SDE parameterized by the input random variables) satisfies specific growth conditions, the QMC method can still preserve the convergence rate of $ \mathcal{O}(m^{-1+\eps})$ for SDE estimation (see Section \ref{sec: qmc euler}), where $ m$ is the number of samples and $ \eps>0$ can be arbitrarily small. This provides a solid theoretical guarantee for applying QMC to the numerical simulation of SDEs.

Nevertheless, applying QMC to SDE simulations faces a formidable practical obstacle: the contradiction between the required discretization accuracy and the high-dimensional challenge. To reduce the $\mathcal{O}(N^{-1})$ EM discretization bias, $N$ must be significantly increased. This severely inflates the nominal integration dimension $d_{\text{nom}} = d \times N$, which drastically degrades QMC efficiency. Extensive research has addressed this high-dimensional challenge. Tractability theories in weighted spaces \cite{sloan1998When, kuo2016Application} explain QMC's efficacy, though they often exclude non-smooth financial payoffs. Alternatively, path generation techniques—such as Brownian Bridge and Principal Component Analysis \cite{caflisch1997Valuation, wang2003effectivea, wang2006Effects, wang2009Dimension, wang2010QuasiMonte, he2014Good}—reduce the \textit{effective dimension} by concentrating variance in the primary variables. Furthermore, methods aligning discontinuity surfaces \cite{wang2012Pricing, wang2016Handling} have been proposed to handle non-smooth payoffs. Yet, regardless of how effectively the dimension is reduced, these techniques cannot eliminate the underlying systematic bias imposed by the EM scheme.

To completely eradicate the discretization bias and the resulting high-dimensional burden, Exact Simulation techniques have emerged. Early breakthroughs utilized rejection sampling \cite{beskos2005Exact, beskos2006Retrospective}, though their acceptance rates decay exponentially in high dimensions. To overcome this, randomized truncation methods based on Multilevel MC (MLMC) were introduced \cite{mcleish2011general, rhee2015Unbiased}, utilizing geometrically distributed random variables to achieve unbiased estimation via telescoping sums, sparking numerous subsequent studies \cite{vihola2018Unbiased}.

Another highly effective Exact Simulation framework relies on operator expansion and Stochastic Time Grids \cite{henry-labordere2017Unbiased, andersson2017Unbiased, doumbia2017Unbiased}. By introducing a Poisson process, the deterministic time integration associated with the Feynman-Kac PDE is ingeniously converted into an expectation over random jump times. This construction naturally eliminates the need for deterministic time discretization, achieving unbiased estimation of SDE functionals.

Despite their tremendous success in eliminating bias, these Exact Simulation techniques are fundamentally built upon standard MC sampling. Integrating QMC into this unbiased domain introduces a dual challenge: Unbiased estimators frequently incorporate indicator functions, causing severe discontinuities with respect to the underlying random variables, which can drastically degrade the convergence rate of QMC \cite{he2015convergence}.

In conclusion, how to overcome the challenges of variable dimensions and discontinuities within the Exact Simulation framework of \cite{henry-labordere2017Unbiased} to design an efficient QMC sampling algorithm remains an unresolved problem with significant theoretical and practical implications. This is precisely the primary focus of the present paper.

\section{Preliminaries}

To facilitate the subsequent exposition, we introduce the following notations. Denote $ 1{:}N = \left\{ 1,\dots,N\right\} $. Consider a subset of coordinate indices $\boldsymbol{u} \subseteq 1{:}N$. Let $ |\bm u|$ be the number of elements in $ \bm u$.  Denote $ \partial^{\bm u}_{\bm x} f:= (\prod_{j\in \bm u}\partial_{x_j}) f$. 

\begin{definition}
    We define the function class $\mathcal{C}_p(\R^d;\R^m)$ as the set of all functions satisfying the following property: a function $f(\bm x):\R^d \rightarrow \R^m$ belongs to $\mathcal{C}_p(\R^d;\R^m)$ if and only if $|f|$ can be bounded by a polynomial in $|\bm x|$; that is, there exist constants $M,K,B>0$ such that:
    \begin{equation}
        |f(\bm x)| \le M|\bm x|^K+B.
    \end{equation}
\end{definition}

By definition, the class $\mathcal{C}_p(\R^d;\R^m)$ comprises all functions whose growth is polynomially bounded. When $m = 1$, we simplify the notation to $\mathcal{C}_p(\R^d)$.

\begin{definition}\label{def:smooth}
    
Let $K$ be $[0,1]^d$ or $\R^d$. A function $f(\x)$ defined over $K$ is called a \textbf{smooth function} if for any $ \uu \subseteq 1{:}d$, $ \partial^{\uu} f $ is continuous. Let $ \mathcal{S}^d(K)$ be the class of such smooth functions.
\end{definition}

Recalling the function classes introduced by \cite{ouyang2024achieving}.
\begin{definition}
    For $ M,B,K>0$, define the polynomial growth class as
    \begin{equation}\notag
        G_p(M,B,K):= \left\{h \in \mathcal{S}^d(\R^d)  :\sup_{\uu \subseteq 1{:}d} \left|\partial^{\boldsymbol{u}}h(\boldsymbol{x})\right| \le M|\x|^K+B\ \text{for all}\  \boldsymbol{x} \in \R^d\right\}.
    \end{equation}
\end{definition}

We say that a function $F$ has \textbf{polynomial growth} if there exist constants $M,B,K>0$ such that $F\in G_p(M,B,K)$.

We next recall some standard notions from QMC theory; see, for example, \cite{Niederreiter1992,dick2010}. For a point set $\mathcal{P}_m=\{\boldsymbol{y}_1,\dots,\boldsymbol{y}_m\}\subset[0,1]^s$, its star discrepancy is defined as follows \cite{Niederreiter1992,dick2010}:
\begin{equation}
    D_m^*(\mathcal{P}_m)
    :=
    \sup_{\boldsymbol{a}\in[0,1]^s}
    \left|
        \frac{1}{m}\sum_{j=1}^m \mathbf{1}_{[\boldsymbol{0},\boldsymbol{a})}(\boldsymbol{y}_j)
        - \prod_{\ell=1}^s a_\ell
    \right|.
\end{equation}
If $f$ has bounded variation in the sense of Hardy and Krause, denoted by $V_{\mathrm{HK}}(f)$, then the Koksma-Hlawka inequality gives the bound \cite{Hlawka1972,Niederreiter1992,dick2010}:
\begin{equation}
    \left|
        \frac{1}{m}\sum_{j=1}^m f(\boldsymbol{y}_j)
        -\int_{[0,1]^s} f(\boldsymbol{y})\,\dd \boldsymbol{y}
    \right|
    \le V_{\mathrm{HK}}(f)D_m^*(\mathcal{P}_m).
\end{equation}
Thus the efficiency of QMC rules is governed by the uniformity of the point set. A point set is called a low-discrepancy point set if $D_m^*(\mathcal{P}_m)=\mathcal{O}(m^{-1}(\log m)^{s-1})$, while an infinite sequence is called a low-discrepancy sequence if its first $m$ points satisfy $D_m^*=\mathcal{O}(m^{-1}(\log m)^s)$ \cite{Niederreiter1992,dick2010}.

A classical construction of low-discrepancy points is provided by digital nets and sequences \cite{Niederreiter1992,dick2010}. In this paragraph only, we follow the classical QMC notation: $m$ denotes the net parameter and $s$ denotes the integration dimension; these symbols may have different meanings in the later SDE analysis. Let $b\ge2$ be an integer base. An elementary interval in $[0,1)^s$ is a set of the form
\[
    E=\prod_{j=1}^s \left[\frac{a_j}{b^{c_j}},\frac{a_j+1}{b^{c_j}}\right),
\]
where $c_j\ge0$ and $0\le a_j<b^{c_j}$ are integers. A point set with $b^m$ points in $[0,1)^s$ is called a $(t,m,s)$-net in base $b$ if every elementary interval $E$ with volume $b^{t-m}$ contains exactly $b^t$ points. An infinite sequence $\{\boldsymbol y_0,\boldsymbol y_1,\dots\}$ is called a $(t,s)$-sequence in base $b$ if, for every $k\ge0$ and every $m>t$, the block
\[
    \boldsymbol{y}_{kb^m},\boldsymbol{y}_{kb^m+1},\dots,\boldsymbol{y}_{(k+1)b^m-1}
\]
forms a $(t,m,s)$-net in base $b$. Such nets and sequences satisfy the low-discrepancy bounds above \cite{Niederreiter1992,dick2010}.

Randomized QMC (RQMC) methods introduce randomness into deterministic low-discrepancy point sets while preserving their uniformity structure. We use the following general definition.

\begin{definition}[RQMC point sets and sequences]\label{def:rqmc point set}
    Let $s,m\ge 1$, and let $\mathcal{P}_m=\{\boldsymbol{y}_1,\dots,\boldsymbol{y}_m\}$ be a random point set in $[0,1]^s$. We call $\mathcal{P}_m$ a \textbf{RQMC point set} if it satisfies
    \begin{enumerate}[label=(\roman*)]
        \item for each $j=1,\dots,m$, the random point $\boldsymbol{y}_j$ is uniformly distributed on $[0,1]^s$, i.e.,
        $\boldsymbol{y}_j \sim \mathcal{U}([0,1]^s)$;
        \item the point set almost surely preserves the low-discrepancy property, i.e., there exists a constant $C_s>0$ independent of $m$ such that
        \[
            D_m^*(\mathcal{P}_m)
            \le C_s\frac{(\log m)^{s-1}}{m},
            \quad \mathrm{a.s.},
        \]
    \end{enumerate}
    where $D_m^*$ denotes the star discrepancy.

    Similarly, let $\{\boldsymbol{y}_1,\boldsymbol{y}_2,\dots\}$ be a random sequence in $[0,1]^s$. We call it a \textbf{RQMC sequence} if it satisfies
    \begin{enumerate}[label=(\roman*)]
        \item for each $j\ge 1$, the random point $\boldsymbol{y}_j$ is uniformly distributed on $[0,1]^s$, i.e.,
        $\boldsymbol{y}_j \sim \mathcal{U}([0,1]^s)$;
        \item the sequence almost surely preserves the low-discrepancy property, i.e., there exists a constant $C_s>0$ independent of $n$ such that, for every $n\ge 2$,
        \[
            D_n^*(\{\boldsymbol{y}_1,\dots,\boldsymbol{y}_n\})
            \le C_s\frac{(\log n)^s}{n},
            \quad \mathrm{a.s.},
        \]
    \end{enumerate}
\end{definition}

The first condition in Definition \ref{def:rqmc point set} ensures that the RQMC estimator is unbiased and can therefore be analyzed through variance or RMSE. The second condition guarantees that the randomized point set or sequence inherits the low-discrepancy property of the original deterministic construction. Common randomizations include random shifts for lattice rules \cite{kuo2006a,kuo2010} and digital randomizations for digital nets and sequences. In particular, Owen's scrambling preserves the digital-net structure almost surely and makes each randomized point uniformly distributed on the unit cube \cite{owen1995,owen1997b,dick2010}. Therefore, a scrambled $(t,m,s)$-net gives an RQMC point set, while a scrambled $(t,s)$-sequence, such as an Owen-scrambled Sobol' sequence, gives an RQMC sequence. For sufficiently smooth integrands, scrambled nets can even yield higher-order RMSE convergence rates; see \cite{owen1997b,owen2008,dick2010}. In this paper, we only use the two general properties stated in Definition \ref{def:rqmc point set}.

\section{QMC Error for the Euler-Maruyama Scheme}\label{sec: qmc euler}

Since SDEs generally lack exact analytical solutions, the EM scheme is commonly employed to simulate trajectories, thereby yielding an estimator with an $\mathcal{O}(N^{-1})$ discretization error based on $N$ grid nodes. To be precise, consider an SDE defined on a probability space $(\Omega, \mathcal{F}, \mathbb{P})$:
\begin{equation}
    \dd X_t = b(t,X_t)\dd t + \sigma(t,X_t) \dd W_t, \quad X_0 = x_0,\quad t\in[0,T],
\end{equation}
where $b:[0,T]\times\R^d \to \R^d$, $\sigma: [0,T]\times\R^d \to \R^{d\times d}$, and $W$ is a standard $d$-dimensional Brownian motion. Our objective is to numerically compute the expectation $\E\left[ g(X_T)\right]$. A standard approach is to utilize the EM scheme. For a given time grid $\mathscr{G}:= \{0<t_0<\cdots<t_N = T\}$, we simulate $m$ Brownian motion trajectories $\{W^{(k)}_{t_i}: 1\le i\le N,\ 1\le k \le m\}$ at the grid nodes, and iteratively generate the approximations $\{X_{t_i}^{(k)}\}_{i=1}^N$ as follows:
\begin{equation}
    X^{(k)}_{t_i} = X^{(k)}_{t_{i-1}} + b(t_{i-1},X^{(k)}_{t_{i-1}})\Delta t_i + \sigma(t_{i-1},X^{(k)}_{t_{i-1}})\Delta W^{(k)}_{t_i},
\end{equation}
where $\Delta t_i := t_i - t_{i-1}$ and $\Delta W^{(k)}_{t_i} = W^{(k)}_{t_i} - W^{(k)}_{t_{i-1}}$. Subsequently, the sample mean is used as a biased estimator to approximate $\E\left[ g(X_T)\right]$, namely:
\begin{equation}\label{eq:euler}
    \tilde I_{m,N}^{\textrm{MC}}: = \frac{1}{m}\sum_{k = 1}^m g(X^{(k)}_{t_N}) \approx \E\left[ g(X_T)\right].
\end{equation}

According to the results in \cite{kloeden1992numerical}, under some regularity conditions of $ b$, $ \sigma$ and $ g$, the EM scheme possesses a first-order weak convergence rate with respect to time discretization. For the sake of simplicity, assuming a uniform grid $t_i = iT/N$, we have:
\begin{equation}
    \left|\E\left[ g(X^{(k)}_{t_N})\right] - \E\left[ g(X_T)\right]\right| \le C\frac{1}{N}.
\end{equation}
Therefore, the RMSE of the estimator $\tilde I_{m,N}$ has the following bound.
\begin{lemma}\label{lemma: em mc rmse}
Assume that $ b $ and $ \sigma$ satisfy the linear growth condition, i.e., there is a constant $ C>0$ such that 
\begin{equation}\label{eq: linear growth condi}
    |b(t,x)| + |\sigma(t,x)| \le C(1 + |x|).
\end{equation}
Moreover, assume that $ b$, $ \sigma$ and $ g$ satisfy the regularity conditions \cite{kloeden1992numerical} so that the EM scheme admits the first order weak convergence. Then, there is a constant $ C$ such that
\begin{equation}\label{eq:rmse of mc}
    \sqrt{\E\left[ (\tilde I_{m,N}^{\textrm{MC}} - \E\left[ g(X_T)\right])^2\right]} \le C\left(\frac{1}{N} + \frac{1}{\sqrt{m}}\right).
\end{equation}
\end{lemma}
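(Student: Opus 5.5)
The argument is the standard bias--variance decomposition of the mean squared error. Write $I := \E[g(X_T)]$ and $\mu_N := \E[g(X^{(1)}_{t_N})]$, the common mean of the i.i.d.\ EM samples $g(X^{(k)}_{t_N})$, $k=1,\dots,m$, which are generated from independent Brownian paths. Since $\tilde I^{\textrm{MC}}_{m,N}-\mu_N$ has mean zero, the cross term vanishes and
\begin{equation}\notag
    \E\left[(\tilde I^{\textrm{MC}}_{m,N} - I)^2\right] = \left(\mu_N - I\right)^2 + \frac{1}{m}\mathrm{Var}\left(g(X^{(1)}_{t_N})\right).
\end{equation}
With $\sqrt{a+b}\le\sqrt a+\sqrt b$, it suffices to bound $|\mu_N - I|\le C/N$ and $\sup_N \mathrm{Var}(g(X^{(1)}_{t_N}))\le C$.

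The bias term is exactly the first-order weak convergence assumed in the statement, i.e.\ the displayed bound $|\E[g(X^{(k)}_{t_N})]-\E[g(X_T)]|\le C/N$ from \cite{kloeden1992numerical}. No further work is needed there.

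The variance term is the only real step, and the difficulty is that the constant must not depend on $N$. I will take the regularity conditions of \cite{kloeden1992numerical} to include polynomial growth of $g$, i.e.\ $g\in\mathcal{C}_p(\R^d)$ with $|g(x)|\le M|x|^K+B$. Then
\begin{equation}\notag
    \mathrm{Var}(g(X_{t_N}))\le \E[g(X_{t_N})^2]\le 2M^2\E|X_{t_N}|^{2K}+2B^2,
\end{equation}
so it is enough to prove the uniform moment bound $\sup_{N}\max_{i\le N}\E|X_{t_i}|^{p}<\infty$ for every $p\ge 2$.

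I will prove this from the linear growth condition \eqref{eq: linear growth condi}. Write $X_{t_i} = x_0+\sum_{j\le i} b(t_{j-1},X_{t_{j-1}})\Delta t_j + \sum_{j\le i}\sigma(t_{j-1},X_{t_{j-1}})\Delta W_{t_j}$.
\begin{itemize}
    \item For the drift sum, Hölder's inequality gives $\E|\sum_{j\le i} b\,\Delta t_j|^p\le T^{p-1}\sum_{j\le i}\Delta t_j\,C^p\,\E(1+|X_{t_{j-1}}|)^p$.
    \item The stochastic sum is a discrete martingale. The Burkholder--Davis--Gundy inequality followed by Hölder gives $\E|\sum_{j\le i}\sigma\,\Delta W_{t_j}|^p\le C_p\,T^{p/2-1}\sum_{j\le i}\Delta t_j\,\E(1+|X_{t_{j-1}}|)^p$.
    \item Adding the two, $a_i:=\max_{l\le i}\E|X_{t_l}|^p$ satisfies $a_i\le C(1+\sum_{j\le i}\Delta t_j\, a_{j-1})$.
\end{itemize}
The discrete Gronwall lemma then yields $a_N\le Ce^{CT}$, independent of $N$. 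Combining this with the bias bound gives \eqref{eq:rmse of mc}.
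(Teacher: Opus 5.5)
Your proposal is correct and is essentially the argument the paper has in mind: the paper gives no separate proof, simply combining the $\mathcal{O}(N^{-1})$ weak error from \cite{kloeden1992numerical} with the usual $\mathcal{O}(m^{-1/2})$ Monte Carlo sampling error, which is exactly your bias--variance decomposition. Your BDG plus discrete Gronwall moment bound fills in the one point the paper leaves implicit, namely that $\mathrm{Var}(g(X_{t_N}))$ is bounded uniformly in $N$ under the stated linear growth condition; the martingale cancellation you use is genuinely needed, since a pathwise product bound like Step 1 in the proof of Theorem \ref{thm: partial deri growth} would grow like $e^{c\sqrt{N}}$ in expectation.
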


Based on \eqref{eq:rmse of mc}, it is evident that the RMSE is governed by both the discretization error and the sampling error. To reduce the RMSE and improve computational efficiency, we seek to elevate the convergence rate of the sampling error. According to classical MC theory, the sampling error decays at a rate of $\mathcal{O}(m^{-1/2})$. A natural idea is to enhance this rate by incorporating the QMC method.

In a related study, \cite{giles2009Multilevela} incorporated the QMC method—specifically rank-1 lattice rules combined with Brownian bridge construction—into the Multilevel Monte Carlo (MLMC) framework, applying it alongside EM and Milstein discretization schemes for financial derivative pricing. Their numerical results demonstrated that this approach can significantly boost computational accuracy compared to traditional MLMC. However, their research primarily focuses on algorithm construction and empirical demonstrations of numerical performance; it has yet to provide a rigorous mathematical proof for the error bounds of the constructed multilevel QMC estimator.

In this section, we aim to establish a rigorous QMC theory and formally prove the convergence rate of the sampling error for the EM scheme under the QMC framework. Notably, because the construction of the EM scheme entails simulating unbounded Brownian motion increments, the resulting integrand is inherently unbounded, rendering classical QMC theory invalid. To this end, we will use the results established in \cite{ouyang2024achieving} for the convergence rate of the QMC method with unbounded integrands. This analytical framework can be directly applied to the sampling error of the EM scheme.

For simplicity of exposition, we focus on a \textbf{one-dimensional} stochastic differential equation. The theoretical conclusions readily extend to higher-dimensional cases, albeit with more cumbersome mathematical notation. 

To explicitly analyze the dependence of the numerical solution on the underlying driving noise, we formulate the EM scheme as a deterministic function of the input vector $\bm z:=(z_1,\dots,z_N)^\top \in \mathbb{R}^N$:
\begin{equation}\label{eq: EM function X}
    \widetilde X_i(\bm z) = \widetilde X_{i-1}(\bm z) + b(t_{i-1}, \widetilde X_{i-1}(\bm z))\Delta t + \sigma(t_{i-1}, \widetilde X_{i-1}(\bm z))\sqrt{\Delta t} z_i, \quad i=1, \dots, N,
\end{equation}
where $\widetilde X_0 = x_0$ and $\Delta t = T/N$. It is worth noting that when the inputs $z_i$ are sampled as independent standard normal random variables (i.e., $z_i \overset{i.i.d.}{\sim} \mathcal{N}(0, 1)$), this functional representation naturally recovers the standard MC version of the EM scheme.

Consequently, the quantity of interest at the terminal time can be expressed directly as the composite functional $g(\widetilde X_N(\bm z))$. Based on the definition of the polynomial growth condition, to establish the QMC error bound, it suffices to prove that for any set $\boldsymbol{u} \subseteq 1{:}N$, the mixed partial derivative of this functional belongs to the class:
\begin{equation}
    \partial^{\bm u}_{\bm z} g(\widetilde X_N(\bm z)) \in \mathcal{C}_p(\R^N).
\end{equation}

To this end, we need to specify the smoothness conditions for the coefficients, which are detailed in the following assumption.

\begin{assumption}\label{assum: qmc euler coef}
    For any $t\in[0,T]$, the functions $b(t,\cdot), \sigma(t,\cdot)$, and $g$ are $N$-times continuously differentiable. Moreover, for any $0\le k\le N$, $\sup_{t\in[0,T]}|\partial_x^k b(t,x)|$, $\sup_{t\in[0,T]}|\partial_x^k \sigma(t,x)|, \frac{\dd^k}{\dd x^k } g \in \mathcal{C}_p(\R)$. Furthermore, we assume that $b$ and $\sigma$ satisfy the linear growth condition (see \eqref{eq: linear growth condi})
\end{assumption}

\begin{remark}
    Note that the above assumption is specifically designed to ensure that the integrand satisfies the smoothness and polynomial growth conditions required for QMC estimation. Deriving the actual convergence rate results (both strong and weak convergence) of the EM scheme necessitates additional assumptions on the coefficients (see \cite{kloeden1992numerical}).
\end{remark}

We will first derive the growth properties of the derivatives of $\widetilde X_i$. The following theorem constitutes a key conclusion of this section.

\begin{theorem}\label{thm: partial deri growth}
    If Assumption \ref{assum: qmc euler coef} holds, then there exist constants $M,B,K>0$ such that for any $0\le i \le N$,
    \begin{equation}
        \widetilde X_i \in G_p(M,B,K).
    \end{equation}
\end{theorem}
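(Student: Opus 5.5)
We argue by induction on $i$, proving a slightly stronger statement that tracks which variables each $\widetilde X_i$ depends on. Since $\widetilde X_i(\bm z)$ depends only on $z_1,\dots,z_i$, we have $\partial^{\bm u}_{\bm z}\widetilde X_i\equiv 0$ whenever $\bm u\not\subseteq 1{:}i$. It therefore suffices to show the following: for each $i$ there are constants $M_i,B_i,K_i$ such that, for every $\bm u\subseteq 1{:}i$, $\partial^{\bm u}_{\bm z}\widetilde X_i$ is continuous and satisfies $|\partial^{\bm u}_{\bm z}\widetilde X_i(\bm z)|\le M_i|\bm z|^{K_i}+B_i$. Since $i$ ranges over the finite set $0{:}N$, we then take $M=\max_i M_i$, $B=\max_i B_i$, $K=\max_i K_i$, using $|\bm z|^{K_i}\le |\bm z|^{K}+1$. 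The base case is $\widetilde X_0=x_0$, which is constant.

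\textbf{Step 1 (polynomial growth of $\widetilde X_i$).} First we show $|\widetilde X_i(\bm z)|\le C_i(1+|\bm z|)^{i}$. From the linear growth condition \eqref{eq: linear growth condi},
\[
|\widetilde X_i|\le |\widetilde X_{i-1}|+C(1+|\widetilde X_{i-1}|)\bigl(\Delta t+\sqrt{\Delta t}\,|z_i|\bigr),
\]
and iterating this gives a polynomial bound in $|\bm z|$. Consequently, for any $k\le N$, the function $t\mapsto \partial_x^k b(t_{i-1},\widetilde X_{i-1}(\bm z))$ is bounded by a polynomial in $|\bm z|$, because a polynomial composed with a polynomial is again a polynomial. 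The same holds for $\sigma$.

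\textbf{Step 2 (inductive step via Faà di Bruno).} Write
\[
\widetilde X_i=\Phi_i\bigl(\widetilde X_{i-1},z_i\bigr),\qquad \Phi_i(x,z)=x+b(t_{i-1},x)\Delta t+\sigma(t_{i-1},x)\sqrt{\Delta t}\,z .
\]
Take $\bm u\subseteq 1{:}i$. If $i\notin\bm u$, the multivariate Faà di Bruno formula (for distinct variables, a sum over set partitions) gives
\[
\partial^{\bm u}\widetilde X_i=\sum_{\pi\in\Pi(\bm u)}\partial_x^{|\pi|}\Phi_i\bigl(\widetilde X_{i-1},z_i\bigr)\prod_{B\in\pi}\partial^{B}\widetilde X_{i-1}.
\]
If $i\in\bm u$, then, because $\Phi_i$ is affine in $z$, we get
\[
\partial^{\bm u}\widetilde X_i=\partial^{\bm u\setminus\{i\}}\Bigl[\sigma\bigl(t_{i-1},\widetilde X_{i-1}\bigr)\Bigr]\sqrt{\Delta t},
\]
where the factor $z_i$ disappears, and the remaining derivative expands by the same partition formula with $\partial_x^{|\pi|}\sigma$ in place of $\partial_x^{|\pi|}\Phi_i$. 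In both cases we use only $|\pi|\le|\bm u|\le N$ derivatives of $b$ and $\sigma$, which is exactly what Assumption \ref{assum: qmc euler coef} provides. Each term is a product of three kinds of factors:
\begin{itemize}
\item a coefficient derivative evaluated at $\widetilde X_{i-1}$, which is polynomially bounded by Step 1;
\item possibly the factor $z_i$, which is polynomially bounded;
\item mixed partials $\partial^B\widetilde X_{i-1}$ with $B\subseteq 1{:}(i-1)$, which are polynomially bounded by the induction hypothesis (they vanish if $B\ni i$).
\end{itemize}
Finite sums and products of polynomially bounded functions are polynomially bounded, since $(M|\bm z|^{K}+B)(M'|\bm z|^{K'}+B')\le M''|\bm z|^{K+K'}+B''$. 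Continuity follows the same way, from the continuity of the derivatives of $b$ and $\sigma$ in $x$ and the continuity of the lower-order partials.

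\textbf{Main obstacle.} The main difficulty is to organize the mixed-derivative chain rule cleanly and to check that no derivative of order higher than $N$ of $b$ or $\sigma$ is ever needed. The partition form of Faà di Bruno handles this directly: every index in $\bm u$ appears at most once, so derivatives of order at most $|\bm u|\le N$ occur. A secondary subtlety is continuity. Assumption \ref{assum: qmc euler coef} only controls $\sup_t|\partial_x^k b|$, and regularity in $t$ is irrelevant because $t=t_{i-1}$ is fixed, so the functions $x\mapsto\partial_x^k b(t_{i-1},x)$ being continuous is enough.
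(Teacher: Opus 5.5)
Your proof is correct, but it organizes the argument differently from the paper. The paper also starts from the linear-growth recursion for $1+|\widetilde X_k|$, which is your Step 1. After that it inducts on the derivative order $|\bm u|$, simultaneously for all $k$. It introduces the discrete tangent process $J_{k,j}=\prod_{i=j+1}^k\bigl(1+\partial_x b_{i-1}\Delta t+\partial_x\sigma_{i-1}\sqrt{\Delta t}\,z_i\bigr)$ and, with $i=\max\bm u$, writes $\partial_{z_i}\widetilde X_k=J_{k,i}\sqrt{\Delta t}\,\sigma(t_{i-1},\widetilde X_{i-1})$. It then expands $\partial^{\bm u\setminus\{i\}}$ of this product by the Leibniz rule over all factors of $J_{k,i}$, applying Fa\`a di Bruno inside each factor and to $\sigma(t_{i-1},\widetilde X_{i-1})$.

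You instead induct forward in the time index. You use only the one-step map $\Phi_i$ and a single Fa\`a di Bruno expansion in one scalar intermediate variable, split according to whether $i\in\bm u$.

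Your route is more local and lighter. No multi-step Jacobian or nested Leibniz expansion is needed, and it is immediate that at most $|\bm u|\le N$ derivatives of $b,\sigma$ enter. You also make explicit three things the paper leaves implicit: that $\partial^{\bm u}\widetilde X_i\equiv0$ for $\bm u\not\subseteq 1{:}i$, continuity, and uniformity of the constants over $i$. The paper's tangent-process formulation instead gives an explicit representation of first-order sensitivities, mirroring the first-variation process of the SDE. That same structure is reused in the MSTG analysis (Step 3 of the proof of Lemma \ref{lemma: deri x growth}).

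Two cosmetic slips:
\begin{itemize}
\item In Step 1 the map should be $\bm z\mapsto\partial_x^k b(t_{i-1},\widetilde X_{i-1}(\bm z))$, not $t\mapsto\cdots$.
\item Using $|\bm z|^{K_i}\le|\bm z|^K+1$, the uniform additive constant must be $\max_i B_i+\max_i M_i$ rather than $\max_i B_i$.
\end{itemize}
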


\begin{proof}
    See Section \ref{sec proof thm: partial deri growth}.
\end{proof}

Theorem \ref{thm: partial deri growth} characterizes the growth property of the mixed partial derivatives $\partial^{\boldsymbol{u}}_{\bm z} \widetilde X_i$ with respect to the Brownian motion increments $\bm z$. This theorem indicates that, as a function of $\bm z$, $\widetilde X_i$ exhibits polynomial growth. Building upon this conclusion, we can rigorously establish the convergence rate of the EM scheme under the QMC method.

Suppose $\mathcal{P}_m = \{\boldsymbol{y}_1, \dots, \boldsymbol{y}_m\}$ is a low-discrepancy point set on the $N$-dimensional unit cube $[0,1]^N$. Consider the following RQMC estimator:
\begin{equation}
    \tilde I_{m,N}^{\textrm{RQMC}} := \frac{1}{m}\sum_{k=1}^m g(\widetilde X_N(\bm z^{(k)})).
\end{equation}
Here, the function $ \widetilde X_N$ is defined by \eqref{eq: EM function X}, and $ \bm z^{(k)} = (z^{(k)}_1, \dots, z^{(k)}_N)^\top = \Phi^{-1}(\boldsymbol{y}_k)$, where $ \Phi^{-1}$ is the inverse cumulated distribution function of standard Gaussian distribution acting on each component of $ \bm y_k$. 

The following theorem provides the RMSE upper bound for the RQMC estimator $\tilde I_{m,N}^{\textrm{RQMC}}$ with respect to the target value $\E[g(X_T)]$.

\begin{theorem}
    \label{thm: qmc error}
    Suppose Assumption \ref{assum: qmc euler coef} holds, and $b, \sigma, g$ satisfy the regularity conditions ensuring the first-order weak convergence of the EM scheme. If $\{\boldsymbol{y}_1, \dots, \boldsymbol{y}_m\}$ is an RQMC point set, then there are constants $ C(\eps,N), C > 0$ such that for any $ \eps > 0$,
    \begin{equation}
        \sqrt{\E\left[ \left| \tilde I_{m,N}^{\textrm{RQMC}} - \E[g(X_T)]\right|^2\right]} \le  C(\eps,N) m^{-1+\eps} + C \frac{1}{N}.
    \end{equation}
\end{theorem}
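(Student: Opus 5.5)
The plan is to split the error into a sampling part and a discretization part. Set $F(\bm z):=g(\widetilde X_N(\bm z))$. Then
\[
\tilde I_{m,N}^{\textrm{RQMC}} - \E[g(X_T)] = \Bigl(\tilde I_{m,N}^{\textrm{RQMC}} - \E[F(\bm z)]\Bigr) + \Bigl(\E[F(\bm z)] - \E[g(X_T)]\Bigr),
\]
where $\bm z\sim\mathcal N(0,I_N)$. By Minkowski's inequality the RMSE is at most the RMSE of the first term plus the absolute value of the second. The second term is deterministic. Since $\bm z$ standard normal reproduces the MC Euler--Maruyama scheme, it equals the weak error $|\E[g(X_{t_N})]-\E[g(X_T)]|\le C/N$ by the assumed first-order weak convergence. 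This gives the $C/N$ term, with $C$ independent of $m$.

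For the sampling term, the plan is to show $F\in G_p(M',B',K')$ on $\R^N$ and then invoke the QMC result for unbounded integrands of \cite{ouyang2024achieving}. That result, for a polynomially growing $F$ composed with $\Phi^{-1}$ and an RQMC point set in the sense of Definition \ref{def:rqmc point set}, yields an error $\mathcal O(m^{-1+\eps})$ for every $\eps>0$. To verify the hypothesis, note that Theorem \ref{thm: partial deri growth} already gives $\widetilde X_N\in G_p(M,B,K)$. Apply the multivariate Faà di Bruno (chain rule) formula to $g\circ\widetilde X_N$. For $\bm u\subseteq 1{:}N$, $\partial^{\bm u}_{\bm z}F$ is a finite sum, over set partitions $\pi$ of $\bm u$, of
\[
g^{(|\pi|)}(\widetilde X_N(\bm z))\prod_{\bm v\in\pi}\partial^{\bm v}_{\bm z}\widetilde X_N(\bm z).
\]
Each factor $\partial^{\bm v}\widetilde X_N$ is bounded by $M|\bm z|^K+B$, and the product has at most $N$ factors. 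By Assumption \ref{assum: qmc euler coef}, $g^{(k)}\in\mathcal C_p(\R)$ for $k\le N$, so $|g^{(k)}(x)|\le M_g|x|^{K_g}+B_g$. Substituting $|\widetilde X_N(\bm z)|\le M|\bm z|^K+B$ gives a polynomial bound in $|\bm z|$. Continuity of these mixed derivatives follows from the $C^N$ smoothness. Taking maxima of the constants over the finitely many $\bm u$ shows $F\in G_p(M',B',K')$ with constants depending on $N$.

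Since each $\bm y_k$ is uniform, $\E[\tilde I^{\textrm{RQMC}}_{m,N}]=\E[F(\bm z)]$, so the first term's RMSE is the square root of the variance. The main obstacle is the mismatch between the \emph{almost sure} discrepancy bound $D_m^*\le C_N(\log m)^{N-1}/m$ and the form of the cited theorem. The plan is to use the version of \cite{ouyang2024achieving} (building on \cite{owen2006a}) giving a deterministic bound $|\frac1m\sum_k F(\Phi^{-1}(\bm y_k))-\E F|\le C(\eps,N)m^{-1+\eps}$ for any point set with low discrepancy and controlled boundary avoidance. The bound is obtained by truncating to a box $[\delta,1-\delta]^N$ with $\delta\sim m^{-1}$, applying Koksma--Hlawka to the truncated variation, which grows only like $(\log m)^{c}$ because of polynomial growth against Gaussian tails, and bounding the tail contribution. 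Because the bound holds almost surely with a constant free of the randomization, squaring and taking expectation yields RMSE $\le C(\eps,N)m^{-1+\eps}$, with the logarithmic factors absorbed into $m^{\eps}$.

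If the cited result instead requires a boundary-avoidance condition, I would check that it holds for scrambled nets. Failing that, I would first try its RQMC-specific form, which bounds the variance directly via the mixed derivatives' moments under the Gaussian measure. Those moments are finite because of the polynomial growth established above. Combining the two terms gives the stated inequality.
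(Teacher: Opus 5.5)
Your proposal matches the paper's proof: the same split into a sampling error plus the $\mathcal{O}(N^{-1})$ weak EM error, the same Fa\`a di Bruno argument combining Theorem~\ref{thm: partial deri growth} with $g^{(k)}\in\mathcal{C}_p(\R)$ to get $g\circ\widetilde X_N\in G_p$, and then a direct appeal to \cite[Corollary 4.8]{ouyang2024achieving}. One correction to the route you sketch first for that citation: an RQMC point set in the sense of Definition~\ref{def:rqmc point set} gives no almost-sure boundary avoidance, so no almost-sure bound with a randomization-free constant is available, and what you need is your fallback, the mean-square RQMC result the paper cites, which controls the near-boundary part through the uniform marginals rather than through boundary avoidance.
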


\begin{proof}
    Utilizing the triangle inequality, we decompose the overall error into the sampling error and the discretization error:
    \begin{equation}
        \begin{aligned}
        &\sqrt{\E\left[ \left| \tilde I_{m,N}^{\textrm{RQMC}} - \E[g(X_T)]\right|^2\right]}\\
        &\le \sqrt{\E\left[ \left|\tilde I_{m,N}^{\textrm{RQMC}} - \E[g(\widetilde X_N)] \right|^2\right]} + \left| \E[g(\widetilde X_N)] - \E[g(X_T)]\right|.
        \end{aligned}
    \end{equation}
    According to the weak convergence theory for numerical solutions of SDEs (see \cite{kloeden1992numerical}), the upper bound of the second term (i.e., the discretization error) is $\mathcal{O}(N^{-1})$. Therefore, it suffices to focus on bounding the first term (i.e., the sampling error).
    
    By \cite[Corollary 4.8]{ouyang2024achieving}, to obtain the convergence rate of RQMC methods, it suffices to prove that the integrand $g(\widetilde X_N(\bm z))$ has polynomial growth. For any index set $\boldsymbol{u} \subseteq 1{:}N$, according to Fa\`a di Bruno's formula, we have:
    \begin{equation}\label{eq: partial g(x)}
        \partial^{\boldsymbol{u}} g(\widetilde X_N) = \sum_{w \in \pi(\boldsymbol{u})} \frac{\dd^{|w|} }{\dd x^{|w|}}g(\widetilde X_N) \prod_{\lambda \in w} \partial^{\lambda} \widetilde X_N.
    \end{equation}
    From Assumption \ref{assum: qmc euler coef}, the derivatives of the terminal function $g$ satisfy $\frac{\dd^k }{\dd x^k}g\in \mathcal{C}_p(\R), 1\le k\le N$. Meanwhile, based on the conclusions of Theorem \ref{thm: partial deri growth}, both $\widetilde X_N$ itself and its mixed partial derivatives $\partial^{\lambda} \widetilde X_N$ with respect to $\bm z$ belong to the function class $\mathcal{C}_p(\R^N)$. Consequently, combining this with \eqref{eq: partial g(x)} yields $\partial^{\boldsymbol{u}}_{\bm z} g\circ\widetilde X_N \in \mathcal{C}_p(\R^N)$. Thus, there exist constants $M, B, K > 0$ such that for any $\boldsymbol{u} \subseteq 1{:}N$, it holds that
    \begin{equation}
        |\partial^{\boldsymbol{u}} g(\widetilde X_N(\bm z))| \le M|\bm z|^K + B.
    \end{equation}
    This confirms that the integrand falls into the polynomial growth class $g(\widetilde X_N) \in G_p(M, B, k)$. A direct application of \cite[Corollary 4.8]{ouyang2024achieving} establishes the convergence rate of $\mathcal{O}(m^{-1+\eps})$. This completes the proof.
\end{proof}

By contrasting Theorem \ref{thm: qmc error} with the MC results in Lemma \ref{lemma: em mc rmse} , we can distinctly observe both the advantages and the potential challenges of applying the RQMC method to SDE simulations.

\paragraph{Advantage in Convergence Rate} 
For a fixed number of time discretization steps $N$, the statistical error of the MC method decays at a rate of $\mathcal{O}(m^{-1/2})$, whereas the RQMC method achieves an accelerated convergence rate of $\mathcal{O}(m^{-1+\eps})$. This implies that to attain a given statistical accuracy $\delta$, the sample size $m$ required by the MC method must be proportional to $\delta^{-2}$, while the RQMC method merely requires a sample size proportional to $\delta^{-1}$. In applications demanding high precision, such as financial derivative pricing, the RQMC method can significantly diminish the requisite number of simulated trajectories, thereby drastically boosting computational efficiency.

\paragraph{Dimensionality Dependence and Challenges}
Although RQMC exhibits an asymptotic superiority with respect to the sample size $m$, it is imperative to acknowledge that the constant term $C(\eps,N)$ in Theorem \ref{thm: qmc error} heavily depends on the problem's dimension, namely the number of time steps $N$. In SDE simulations, mitigating the discretization error $\mathcal{O}(N^{-1})$ typically necessitates an extensively large $N$. However, as $N$ increases, the actual integration dimension escalates. Since the error bound of RQMC is inherently tied to the integration dimension, an excessively high dimension will degrade the uniformity of the point sets in high dimensions, thus severely attenuating the practical efficiency of the RQMC method.

Consequently, while directly applying RQMC to SDE simulations yields a superior theoretical convergence rate, an excessively large $N$ results in an unmanageably high integration dimension that undermines the practical efficacy of RQMC. This high-dimensional challenge precisely serves as the motivation for introducing the Stochastic Time Grid estimator based on Exact Simulation in the subsequent portions of this chapter: by eliminating or substantially mitigating the reliance on a large $N$, we are able to fully unleash the high-order convergence potential of the RQMC method within a much lower-dimensional sampling space.

\section{Error and Dimensionality Reduction Effect of the Multilevel Stochastic Time Grid Method}\label{sec: qmc exact}

In the preceding section, we proved that directly applying the RQMC method can effectively improve the convergence rate of the sampling error in SDE simulations, thereby outperforming the classical MC method. However, when examining the overall simulation error, one must simultaneously consider the sampling error (determined by the sample size $m$) and the discretization error (determined by the number of time steps $N$). The classical EM scheme exhibits first-order weak convergence, meaning its discretization bias is approximately $\mathcal{O}(N^{-1})$. This implies that to achieve a prescribed overall error tolerance $\varepsilon$, the required number of time grid points $N$ must reach at least the order of $\mathcal{O}(\varepsilon^{-1})$.

This linear dependence on $N$ leads to a prominent issue: the integration dimension required to construct the RQMC estimator is $d \times N$. When the accuracy requirement is stringent (i.e., $\varepsilon$ is small), $N$ increases rapidly, rendering the actual integration dimension exceedingly high. Although modern RQMC theory and practice indicate that the high-dimensional challenge is not insurmountable—meaning that theoretical RQMC error bounds are often overly conservative and RQMC generally still outperforms MC in practical high-dimensional problems—it is undeniable that as the dimension escalates, it degrades the uniformity of point sets in high dimensions, thereby leading to diminishing marginal returns in algorithmic efficiency.

To mitigate the efficiency degradation caused by high dimensions, existing literature frequently employs path generation techniques such as Brownian Bridge or Principal Component Analysis. The core idea behind these methods is to rearrange the generation sequence of the Brownian motion increments, thereby reducing the \textbf{effective dimension} of the problem and concentrating the principal fluctuations of the integrand into the first few dimensions. Nevertheless, these methods do not alter the \textbf{actual integration dimension} of the problem; that is, the total number of underlying random variables remains $d \times N$.

Departing from the aforementioned approaches, in this section we introduce a novel perspective based on Exact Simulation and the Stochastic Time Grid, aiming to directly reduce the \textbf{actual integration dimension} of the problem. We observe that the decay of the discretization error of the classical EM scheme with respect to the number of grid points $N$ exhibits a "heavy-tailed" characteristic (i.e., a slow polynomial decay of $\mathcal{O}(N^{-1})$). This entails a prohibitive dimensional cost to eliminate the bias. Conversely, the discretization error based on the Exact Simulation method exhibits a ``thin-tailed", remarkably fast super-exponential decay as the number of discrete grid points (corresponding to the dimension) increases.

This ``thin-tailed" characteristic implies that we only need to simulate extremely low-dimensional variables to capture the essential features of the target function. Consequently, while ensuring the desired accuracy, the actual integration dimension required by the RQMC method can be constrained to a minimal range. In this manner, for a given error tolerance, we fundamentally reduce the actual integration dimension compared to the classical EM discretization scheme, significantly enhancing the computational efficiency of the RQMC method in SDE simulations.

In this section, we consider the case of constant volatility, where $\sigma(t,x) = \sigma_0$ is a invertible constant matrix. Thus, the process $X_t$ is governed by:
\begin{equation}
    \dd X_t = b(t,X_t) \dd t + \sigma_0 \dd W_t,\quad X_0 = x_0.
\end{equation}

\subsection{Exact Simulation Method}

\cite{henry-labordere2017Unbiased} proposed an unbiased estimator for the expectation $\E\left[ g(X_T)\right]$, where $g: \R^d \to \R$ is a given terminal function.
Let $\beta > 0$ be a fixed constant, and let $(\tau_j)_{j\ge 1}$ be a sequence of independent and identically distributed random variables following an exponential distribution $\mathcal{E}(\beta)$ with parameter $\beta$.
We define the partial sums $S_i = \sum_{j=1}^i \tau_j$ (with the convention $S_0=0$) and let $N_T = \max\{i: S_i < T\}$ denote the number of jumps prior to time $T$.
Then, $(N_t)_{0\le t\le T}$ constitutes a Poisson process with intensity $\beta$.
Utilizing the jump times of this process, we can construct a Stochastic Time Grid $\mathcal{T} = \{0 = T_0 < T_1 < \cdots < T_{N_T} < T_{N_T+1} = T\}$ on the interval $[0, T]$, where $T_i = S_i$ for $1 \le i \le N_T$.

Let $\Delta T_{i+1} := T_{i+1} - T_{i}$ denote the time intervals, and let $\Delta W_{T_{i+1}} := W_{T_{i+1}} - W_{T_i}$ represent the corresponding Brownian motion increments. We define the following discretized process $\widehat{X}$:
\begin{equation}
    \widehat{X}_{T_{i+1}} = \widehat{X}_{T_i} + b(T_i, \widehat{X}_{T_i})\Delta T_{i+1} + \sigma_0\Delta W_{T_{i+1}}, \quad \widehat{X}_{T_0} = x_0, \quad 0 \le i \le N_T.
\end{equation}
It is worth noting that although the construction of $\widehat{X}$ mimics the classical EM scheme, its time steps $\Delta T_{i+1}$ are fundamentally random.
Based on the above process, the estimator $\widehat{\psi}$ is defined as:
\begin{equation}\label{eq: unbiased estimator formula}
    \widehat{\psi} := e^{\beta T}\left[ g(\widehat{X}_T) - g(\widehat{X}_{T_{N_T}})\1_{\left\{ N_T > 0\right\}}\right] \prod_{i=1}^{N_T} \mathcal{W}_i, 
\end{equation}
where the weight term $\mathcal{W}_i$ is given by:
\begin{equation}\label{eq: weight definition}
    \mathcal{W}_i := \frac{\left(b(T_{i}, \widehat{X}_{T_i}) - b(T_{i-1}, \widehat{X}_{T_{i-1}})\right)^{\top} (\sigma_0^{\top})^{-1}\Delta W_{T_{i+1}}}{\beta \Delta T_{i+1}}.
\end{equation}

We review the main conclusion from \cite{henry-labordere2017Unbiased} in the following lemma.

\begin{assumption}\label{assum: for exact}
    Assume that $\sigma_0$ is positive definite, and the drift coefficient $b(t,x)$ is bounded and continuous with respect to $(t,x)$. Moreover, $b$ is uniformly $1/2$-H\"older continuous in $t$ and Lipschitz continuous in $x$; that is, there exists a constant $C>0$ such that for any $(t,x),(s,y) \in [0,T]\times \R^d$,
    \begin{equation}
        \left| b(t,x) - b(s,y)\right| \le C\left( \left| t-s\right|^{1/2} + \left| x-y\right|\right).
    \end{equation}
    Furthermore, assume that the terminal function $g$ satisfies a global Lipschitz condition.
\end{assumption}

\begin{lemma}[\cite{henry-labordere2017Unbiased}]
    Under Assumption \ref{assum: for exact}, $\widehat{\psi}$ is an unbiased estimator of the target expectation and possesses a finite second moment; that is,
    \begin{equation}
        \E[\widehat{\psi}] = \E[g(X_T)], \quad \text{and} \quad \E[\widehat{\psi}^2] < \infty.
    \end{equation}
\end{lemma}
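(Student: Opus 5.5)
The plan is to follow the Feynman--Kac and Malliavin-weight route of \cite{henry-labordere2017Unbiased}. Let $u(t,x):=\E[g(X_T)\mid X_t=x]$. Under Assumption \ref{assum: for exact}, $u$ solves the Kolmogorov backward equation
\[
\partial_t u + b\cdot\nabla u + \tfrac12 \sigma_0\sigma_0^\top:\nabla^2 u = 0, \qquad u(T,\cdot)=g.
\]
For a frozen point $(s,y)$, write $\bar X^{s,y}$ for the Gaussian process with constant drift $b(s,y)$ started at $y$ at time $s$. By Duhamel's principle,
\[
u(s,y)=\E[g(\bar X^{s,y}_T)] + \int_s^T \E\Big[\big(b(r,\bar X^{s,y}_r)-b(s,y)\big)\cdot\nabla u(r,\bar X^{s,y}_r)\Big]\dd r .
\]
Two randomizations are then applied to this identity. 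First, the time integral is replaced by an expectation over the exponential time $\tau\sim\mathcal{E}(\beta)$: the density $\beta e^{-\beta(r-s)}$ gives the factor $e^{\beta(r-s)}/\beta$, and the event $\{\tau\ge T-s\}$ carries the extra weight $e^{\beta(T-s)}$. Second, $\nabla u$ is removed by Gaussian integration by parts (the Bismut--Elworthy--Li formula) on the Gaussian transition. This gives
\[
\nabla u(r,x)=\E\Big[u\big(r',\bar X^{r,x}_{r'}\big)\,\frac{(\sigma_0^\top)^{-1}\Delta W}{r'-r}\Big],
\]
where $r'$ is the next jump time. Iterating over the jump times $T_1<T_2<\cdots$ produces exactly the product of the weights $\mathcal{W}_i$ in \eqref{eq: weight definition}.

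For the representation, I will prove by backward induction on the number of jumps that the truncated expansion equals $u(0,x_0)$ up to a remainder. The remainder is the contribution of paths with more than $n$ jumps, and I must show it tends to zero. The subtracted term $g(\widehat X_{T_{N_T}})\1_{\{N_T>0\}}$ plays a specific role in the last interval. Writing $\E[(g(\widehat X_T)-g(\widehat X_{T_{N_T}}))\,\Delta W]$ instead of $\E[g(\widehat X_T)\,\Delta W]$ changes nothing in expectation, because $\Delta W$ is centred. I will verify this "control variate" identity first, since it is what makes the second moment finite.

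The key estimate, and the main obstacle, is the $L^2$ bound on $\widehat\psi$. The approach is to condition on the grid $\mathcal{T}$ and work backward. Conditionally, Lipschitz continuity of $b$ in $x$ and $1/2$-H\"older continuity in $t$ give
\[
\E\big[|\mathcal{W}_i|^2\mid \mathcal{F}_{T_i},\mathcal{T}\big]\;\lesssim\; \frac{\Delta T_i+|\widehat X_{T_i}-\widehat X_{T_{i-1}}|^2}{\beta^2\,\Delta T_{i+1}} .
\]
Because $b$ is bounded, $\E|\widehat X_{T_i}-\widehat X_{T_{i-1}}|^2\lesssim \Delta T_i$, so each weight contributes $C\,\Delta T_i/(\beta^2\Delta T_{i+1})$. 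The ratios $\Delta T_i/\Delta T_{i+1}$ chain together, and the product telescopes to roughly $\Delta T_1\cdot C^n/\Delta T_{n+1}$. The final factor $1/\Delta T_{n+1}$ is cancelled by the Lipschitz property of $g$: the last-interval difference satisfies $|g(\widehat X_T)-g(\widehat X_{T_{N_T}})|^2\lesssim \Delta T_{n+1}$. The remaining quantity is then $e^{2\beta T}\sum_n C^n\,\p(N_T=n)\,\E[\cdots\mid N_T=n]$. Conditional on $N_T=n$, the jump times are uniform order statistics with density $n!/T^n$, which gives terms like $(C T/\beta)^n/n!$ and hence a convergent series.

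Making this telescoping rigorous is delicate. The increments of $\widehat X$ and the Gaussian weights are correlated, so a plain Cauchy--Schwarz loses too much. I would therefore use conditional Gaussian moment bounds, for instance Itô isometry applied to $\Delta W$ given $\mathcal{F}_{T_i}$. I would also use the identity $\E[\Delta W_{T_{i+1}}\mid\mathcal{F}_{T_i}]=0$ to kill cross terms. Once the second moment is finite, the same estimate gives uniform integrability of the truncated expansions, so the remainder vanishes and unbiasedness follows.
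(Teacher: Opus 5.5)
The paper gives no proof of this lemma; it quotes it from \cite{henry-labordere2017Unbiased}. Your overall route (Feynman--Kac, Duhamel expansion around a frozen drift, exponential randomization of the time integral, Gaussian automatic differentiation, centering of $g$ on the last interval, and an $L^2$ bound obtained by conditioning on the grid) is the one used in that reference.

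\textbf{The gap.} The step that is supposed to generate the weights is wrong as you wrote it. The identity $\nabla u(r,x)=\E[u(r',\bar X^{r,x}_{r'})(\sigma_0^\top)^{-1}\Delta W/(r'-r)]$ is false. The function $u$ is not space--time harmonic for the frozen-drift Gaussian process, so Gaussian integration by parts applied to $u(r',\cdot)$ computes the gradient of $x\mapsto\E[u(r',\bar X^{r,x}_{r'})]$, and that function is not $u(r,x)$. Moreover, your $\bar X^{s,y}$ carries the drift $b(s,y)$, which moves with the starting point. Your Duhamel identity is therefore correct for each fixed $(s,y)$, but it cannot be differentiated in $y$ without $\partial_x b$, which need not exist under Assumption \ref{assum: for exact}. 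Iterating your formula gives neither a valid expansion nor the product $\prod_i\mathcal W_i$.

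\textbf{The fix.} Decouple the frozen drift from the starting point and differentiate the Duhamel identity itself. Fix $m:=b(r,x)$. Then for every $x'$,
\[
u(r,x')=\E\big[g(X^{x'}_T)\big]+\int_r^T\E\big[(b(s,X^{x'}_s)-m)\cdot\nabla u(s,X^{x'}_s)\big]\,\dd s,\qquad X^{x'}_s:=x'+m(s-r)+\sigma_0(W_s-W_r).
\]
Differentiate at $x'=x$ and integrate by parts on both terms. With $\bar X:=X^{x}$ this gives
\[
\nabla u(r,x)=\E\Big[g(\bar X_T)\,\frac{(\sigma_0^\top)^{-1}(W_T-W_r)}{T-r}\Big]+\int_r^T\E\Big[\big(b(s,\bar X_s)-b(r,x)\big)\cdot\nabla u(s,\bar X_s)\,\frac{(\sigma_0^\top)^{-1}(W_s-W_r)}{s-r}\Big]\,\dd s .
\]
Only after this step do you randomize $s$. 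The Malliavin weight of the interval $[T_i,T_{i+1}]$ then multiplies the Duhamel integrand at $T_{i+1}$, not $u$. This is exactly what pairs $b(T_i,\widehat X_{T_i})-b(T_{i-1},\widehat X_{T_{i-1}})$ with $\Delta W_{T_{i+1}}$ in $\mathcal W_i$. Here $g(\bar X_T)$ may be replaced by $g(\bar X_T)-g(x)$. Differentiating under $\int\dd s$ is legitimate because the integrand is $O((s-r)^{-1/2})$ uniformly in $x'$, but this already uses $\sup|\nabla u|<\infty$. You must also justify $u\in C^{1,2}([0,T)\times\R^d)$, either by parabolic Schauder theory for a bounded H\"older drift or by mollifying $b$ and $g$. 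It\^o's formula should be applied on $[r,T-\eps]$, since $g$ is only Lipschitz.

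\textbf{The remainder.} You never mention bounded $\nabla u$, but the remainder needs it. After $n$ substitutions the remainder is not a tail of $\widehat\psi$; it is
\[
R_n=\E\Big[\1_{\{N_T>n\}}\,\beta^{-1}e^{\beta T_{n+1}}\Big(\prod_{i=1}^n\mathcal W_i\Big)\big(b(T_{n+1},\widehat X_{T_{n+1}})-b(T_n,\widehat X_{T_n})\big)\cdot\nabla u(T_{n+1},\widehat X_{T_{n+1}})\Big].
\]
So $\E[\widehat\psi^2]<\infty$ alone does not make $R_n$ vanish. Two further facts close the argument.
\begin{enumerate}
\item $\|\nabla u\|_\infty<\infty$. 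With additive noise, $|X^{t,x}_s-X^{t,y}_s|\le e^{L(s-t)}|x-y|$ pathwise, and $g$ is Lipschitz.
\item A first-moment version of your weight estimate. It gives $|R_n|\lesssim\|\nabla u\|_\infty(C/\beta)^n\,\p(N_T>n)\to0$.
\end{enumerate}

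\textbf{The $L^2$ bound.} Your telescoping heuristic reaches the right conclusion, but no cancellation of cross terms is needed. Boundedness of $b$ gives
\[
|b(T_i,\widehat X_{T_i})-b(T_{i-1},\widehat X_{T_{i-1}})|^2\le C(\Delta T_i+|\Delta W_{T_i}|^2),\qquad |g(\widehat X_T)-g(\widehat X_{T_n})|^2\le C(\Delta T_{n+1}^2+|\Delta W_{T_{n+1}}|^2).
\]
Given the grid, each increment $\Delta W_{T_i}$ then occurs in at most two adjacent factors. The bound $\E[|\Delta W_{T_i}|^2(\Delta T_i+|\Delta W_{T_i}|^2)]\le C\Delta T_i^2$ cancels the $\Delta T_i^{-2}$ from $\mathcal W_{i-1}^2$ on the spot. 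Conditional independence of the increments then yields, for $n\ge1$,
\[
\E[\widehat\psi^2\mid N_T=n,\mathcal T]\le e^{2\beta T}(C/\beta^2)^n\Delta T_1 .
\]
Summing against $p_n$ gives $\E[\widehat\psi^2]<\infty$ with terms of order $Te^{\beta T}(CT/\beta)^n/(n+1)!$.
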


\begin{remark}
    The classical EM scheme incurs discretization bias because it "freezes" the drift coefficient $b(t, X_t) \approx b(t_i, X_{t_i})$ over the time interval $[t_i, t_{i+1}]$. In contrast, formula \eqref{eq: unbiased estimator formula} introduces a Poisson process to transform a series summation into an expectation over a random stopping time $N_T$, thereby completely eliminating the systematic bias induced by time discretization in a statistical sense. The weight term $\mathcal{W}_i$ can be interpreted as a Girsanov-type correction factor. On each stochastic interval $[T_i, T_{i+1}]$, the approximating process $\widehat{X}$ utilizes a constant drift. The difference term $b(T_{i}, \widehat{X}_{T_i}) - b(T_{i-1}, \widehat{X}_{T_{i-1}})$ within the weight captures the variation of the drift coefficient over time. By adjusting the probability measure of the paths, this weight effectively compensates for the error originating from the freezing of the drift coefficient.
    
\end{remark}

\subsection{The Multilevel Stochastic Time Grid Estimator}

In this subsection, we introduce the Multilevel Stochastic Time Grid (MSTG) estimator. Let $N_T$ be a Poisson random variable with intensity $\beta T$. Denoting $p_n := \mathbb{P}(N_T = n) = \frac{(\beta T)^n}{n!}e^{-\beta T}$ for $n \ge 0$, we have:
\begin{equation}
    \E[\widehat{\psi}] = \sum_{n = 0}^\infty \E\left[ \widehat{\psi}\1_{\left\{ N_T = n\right\}}\right] = \sum_{n = 0}^\infty p_n \E\left[ \widehat{\psi} \mid N_T = n\right].
\end{equation}
We now focus on the conditional expectation $\E\left[ \widehat{\psi} \mid N_T = n\right]$ and aim to construct its unbiased estimator. Under the condition $N_T = n$, there are $n$ arrivals before time $T$, which partition the time interval $[0, T]$ into $n+1$ segments with lengths $\Delta T_1, \dots, \Delta T_{n+1}$. The following lemma characterizes the conditional distribution of these increments. 

\begin{lemma}\label{lemma: conditional distribution of time diff}
    Conditioned on $N_T = n$, the vector $(\Delta T_1/T, \dots, \Delta T_{n+1}/T)$ follows a uniform distribution on the $n$-dimensional simplex: $$\mathcal{A}:=\left\{\bm y \in \R^{n+1}: \sum_{i=1}^{n+1} y_i = 1, y_i \ge 0, i = 1, \dots, n+1\right\}.$$
\end{lemma}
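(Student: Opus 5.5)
The plan is to compute the conditional density of the first $n$ arrival times directly from the i.i.d.\ exponential structure, and then pass to the spacings by a linear change of variables.

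\textbf{Step 1 (joint density of the arrival times).} The inter-arrival times $\tau_1,\dots,\tau_{n+1}$ are i.i.d.\ $\mathcal{E}(\beta)$, so their joint density is $\beta^{n+1}e^{-\beta(t_1+\cdots+t_{n+1})}$ on $(0,\infty)^{n+1}$. Apply the unit-Jacobian triangular map $(\tau_1,\dots,\tau_{n+1})\mapsto(S_1,\dots,S_{n+1})$. This shows that $(S_1,\dots,S_{n+1})$ has density $\beta^{n+1}e^{-\beta s_{n+1}}$ on the cone $0<s_1<\cdots<s_{n+1}$.

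\textbf{Step 2 (intersect with $\{N_T=n\}$ and integrate out $S_{n+1}$).} The event $\{N_T=n\}$ is exactly $\{S_n<T\le S_{n+1}\}$. Take a Borel set $B\subseteq\{0<s_1<\dots<s_n<T\}$. Integrating out $s_{n+1}$ over $[T,\infty)$ gives
\begin{equation}
    \p\left((S_1,\dots,S_n)\in B,\ N_T=n\right)=\int_B \beta^n e^{-\beta T}\,\dd s_1\cdots \dd s_n .
\end{equation}
The integrand is constant in $(s_1,\dots,s_n)$. Taking $B$ to be the whole ordered region recovers $p_n=(\beta T)^n e^{-\beta T}/n!$, because that region has volume $T^n/n!$. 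Dividing the two expressions, the conditional law of $(T_1,\dots,T_n)$ given $N_T=n$ is uniform on $\{0<s_1<\cdots<s_n<T\}$, with density $n!/T^n$.

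\textbf{Step 3 (map to the spacings).} Consider the linear map
\begin{equation}
    (s_1,\dots,s_n)\mapsto (y_1,\dots,y_n)=\left(\frac{s_1}{T},\frac{s_2-s_1}{T},\dots,\frac{s_n-s_{n-1}}{T}\right),
\end{equation}
and set $y_{n+1}=1-\sum_{i\le n}y_i=(T-s_n)/T$. This map has constant Jacobian $T^{-n}$. It sends the ordered region bijectively onto $\{y_i>0,\ \sum_{i\le n}y_i<1\}$, which is the coordinate chart of $\mathcal{A}$ obtained by dropping the last coordinate. An affine image of a uniform law is uniform, so $(y_1,\dots,y_n)$ is uniform on this chart, with density $n!$.

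\textbf{Step 4 (identify the result).} By definition, the uniform distribution on $\mathcal{A}$ is the law of $(y_1,\dots,y_n,1-\sum_{i\le n}y_i)$ with $(y_1,\dots,y_n)$ uniform on the projected simplex. This is the Dirichlet$(1,\dots,1)$ law, and it agrees with normalized surface measure on $\mathcal{A}$ because the projection has a constant Jacobian. Since $(\Delta T_1/T,\dots,\Delta T_{n+1}/T)$ is exactly this vector, the lemma follows. The boundary sets where some $y_i=0$ have measure zero, so using open or closed inequalities makes no difference. The case $n=0$ is trivial, since then $\Delta T_1=T$ and $\mathcal{A}=\{1\}$.

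\textbf{Main obstacle.} The only delicate point is Step 2: one must condition on the event $\{N_T=n\}$ correctly, including the factor $e^{-\beta T}$ coming from $S_{n+1}\ge T$, and check that the resulting density does not depend on $(s_1,\dots,s_n)$. That independence is what makes the law uniform. Everything else is a linear change of variables.
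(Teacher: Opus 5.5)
Your proposal is correct and follows the same route as the paper, which also reduces the lemma to two facts: (i) given $N_T=n$, the law of $(T_1,\dots,T_n)$ is that of the order statistics of $n$ i.i.d.\ uniforms on $[0,T]$; (ii) the normalized spacings of such order statistics are $\mathrm{Dirichlet}(1,\dots,1)$, i.e., uniform on $\mathcal{A}$. The only difference is that the paper simply cites these two standard facts, whereas you prove them directly: (i) comes from the exponential-density computation in your Steps 1--2 (including the $e^{-\beta T}$ factor from $S_{n+1}\ge T$), and (ii) from the unit-determinant difference map in Steps 3--4. This makes your argument self-contained.
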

\begin{proof}
    Conditioned on $N_T=n$, the arrival times $(T_1,\dots,T_n)$ have the same distribution as the order statistics of $n$ independent uniform random variables on $[0,T]$ \cite[Section~2.5]{gallager1996discrete}. The normalized spacings of uniform order statistics have the Dirichlet distribution $\mathrm{Dirichlet}(1,\dots,1)$, which is the uniform distribution on the simplex \cite{pyke1965spacings,david2003orderstatistics}.
\end{proof}

Lemma \ref{lemma: conditional distribution of time diff} establishes the distribution of $(\Delta T_1, \dots, \Delta T_{n+1})$ given $N_T = n$. While various methods exist for simulating a uniform distribution on a simplex—such as using the spacings of $n$ order statistics—the latter involves indicator functions. Such non-smoothness introduces points of non-differentiability in the integrand, which can severely degrade the QMC convergence rate \cite{he2015}. The following lemma provides an alternative simulation approach using normalized exponential random variables (see more details in \cite[Chapter 5]{owen2013}), which ensures a smoother integrand by avoiding indicator functions. 

\begin{lemma}\label{lemma: normalized exponential}
    Let $\tau_1, \dots, \tau_{n+1}$ be independent and identically distributed exponential random variables. Then the normalized random vector $\left(\frac{\tau_1}{\sum_{i=1}^{n+1} \tau_i}, \dots, \frac{\tau_{n+1}}{\sum_{i=1}^{n+1} \tau_i} \right)$ follows a uniform distribution on the $n$-dimensional simplex $ \mathcal{A}$. 
\end{lemma}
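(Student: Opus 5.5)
The plan is a change of variables. Write $\tau_i \overset{\text{i.i.d.}}{\sim} \mathcal{E}(\beta)$; the rate is irrelevant, since rescaling every $\tau_i$ by the same constant leaves the normalized vector unchanged. Their joint density on $(0,\infty)^{n+1}$ is $\beta^{n+1}e^{-\beta\sum_i \tau_i}$. We introduce the bijection
\[
(\tau_1,\dots,\tau_{n+1}) \mapsto (y_1,\dots,y_n,s), \qquad s=\sum_{i=1}^{n+1}\tau_i,\quad y_i=\tau_i/s,
\]
from $(0,\infty)^{n+1}$ onto $\{(y_1,\dots,y_n): y_i>0,\ \sum_{i\le n} y_i<1\}\times(0,\infty)$. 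Its inverse is $\tau_i=s y_i$ for $i\le n$ and $\tau_{n+1}=s(1-\sum_{i\le n}y_i)$, and $y_{n+1}=1-\sum_{i\le n}y_i$ is a deterministic function of the first $n$ coordinates.

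The key step is the Jacobian of the inverse map. Its matrix has the entry $s$ at position $(i,i)$ for $i\le n$ and $y_i$ in the last column. The last row is $(-s,\dots,-s,\,1-\sum y_i)$. Adding the first $n$ rows to the last row turns that row into $(0,\dots,0,1)$. The matrix is then upper triangular with determinant $s^n$.

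Substituting gives the joint density of $(y_1,\dots,y_n,s)$ as
\[
\beta^{n+1}s^n e^{-\beta s}
\]
on the product domain. This factorizes: $s\sim\mathrm{Gamma}(n+1,\beta)$ is independent of $(y_1,\dots,y_n)$, and the density of $(y_1,\dots,y_n)$ is the constant $n!$, using $\int_0^\infty \beta^{n+1}s^ne^{-\beta s}\,\dd s=n!$. The region $\{y_i>0,\ \sum y_i<1\}$ has volume $1/n!$, so $(y_1,\dots,y_n)$ is uniform on it.

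Finally, this projection identifies with $\mathcal{A}$. The simplex $\mathcal{A}$ is the graph of the linear map $(y_1,\dots,y_n)\mapsto 1-\sum y_i$, so the uniform (normalized surface) measure on $\mathcal{A}$ is the pushforward of the uniform measure on the projected region. The boundary where some $y_i=0$ has measure zero. The only real care needed is this interpretation of ``uniform on an $n$-dimensional set in $\R^{n+1}$''. The determinant computation is the only computational step, and it is routine.

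An alternative route would be to identify the law as $\mathrm{Dirichlet}(1,\dots,1)$, as in the proof of Lemma \ref{lemma: conditional distribution of time diff}, via the standard Gamma-normalization representation. The direct computation above is self-contained, so I would present that.
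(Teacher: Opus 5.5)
Your proof is correct, and it gives more than the paper does. The paper states this lemma without proof and defers to \cite[Chapter 5]{owen2013}. The only related reasoning it supplies is the $\mathrm{Dirichlet}(1,\dots,1)$ identification in the proof of Lemma~\ref{lemma: conditional distribution of time diff}, which is the alternative route you mention at the end.

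Your change of variables is carried out correctly. The map is a bijection of $(0,\infty)^{n+1}$ onto the open projected simplex times $(0,\infty)$, and the row operation leaves an upper triangular matrix with determinant $s^n$.

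Compared with the citation, your computation buys three things:
\begin{itemize}
\item a self-contained proof;
\item the by-product that $\sum_i\tau_i\sim\mathrm{Gamma}(n+1,\beta)$ is independent of the normalized vector;
\item an explicit meaning for ``uniform on $\mathcal{A}$''. Since $\mathcal{A}$ is the graph of the map $y\mapsto 1-\sum_{i\le n}y_i$ (affine, not linear as you wrote), its area element $\sqrt{n+1}$ is constant. Hence normalized surface measure on $\mathcal{A}$ is the pushforward of the uniform law on the projected region.
\end{itemize}

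The citation, in turn, places the fact inside the general Gamma-normalization representation of $\mathrm{Dirichlet}(\alpha_1,\dots,\alpha_{n+1})$. Your computation recovers that case too. Inserting the factors $\tau_i^{\alpha_i-1}$ into the density and using the same Jacobian $s^n$ gives $s^{\sum_i\alpha_i-1}e^{-\beta s}\prod_i y_i^{\alpha_i-1}$ (with $y_{n+1}=1-\sum_{i\le n}y_i$), which again factorizes.
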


Utilizing Lemma \ref{lemma: conditional distribution of time diff} and Lemma \ref{lemma: normalized exponential}, we construct an estimator to approximate $\E[g(X_T)]$. 

For a fixed level $n$, we characterize the estimator as a deterministic function of a Gaussian input vector $\bm{z} \in \mathbb{R}^{(d+1)(n+1)}$. We decompose $\bm{z} = (\bm z^{\tau},\bm z^W)$ into a time-grid component $\bm{z}^{\tau} = (z^{\tau}_1, \dots, z^{\tau}_{n+1}) \in \mathbb{R}^{n+1}$ and a Brownian component $\bm{z}^W = (\bm{z}^{W}_1, \dots, \bm{z}^{W}_{n+1})\in \mathbb{R}^{d(n+1)}$, where each $\bm{z}^{W}_i \in \mathbb{R}^d$. 

First, we define the stochastic time increments as functions of $\bm{z}^{\tau}$ through the composition of the inverse exponential CDF $ F_{\tau}^{-1}$ and the standard normal CDF $ \Phi$. The increments are given by:
\begin{equation}\label{eq: time increment}
    \Delta T_i(\bm{z}^{\tau}) := \frac{F_{\tau}^{-1} \circ \Phi(z_i^{\tau})}{\sum_{j = 1}^{n+1} F_{\tau}^{-1} \circ \Phi(z_j^{\tau})} T, \quad i = 1, \dots, n+1.
\end{equation}
The corresponding time grid points are then $T_i(\bm{z}^{\tau}) = \sum_{j=1}^i \Delta T_j(\bm{z}^{\tau})$ with $T_0 = 0$. This construction ensures that when $\bm z^\tau \sim \mathcal{N}(0,I_{n+1})$, the vector $(\Delta T_1, \dots, \Delta T_{n+1})$ is uniformly distributed on the $n$-dimensional simplex.

Second, the Brownian increments are constructed as functions of both the time component and the Gaussian Brownian component:
\begin{equation}\label{eq: brownian increment}
    \Delta W_{T_i}(\bm z) := \sqrt{\Delta T_i(\bm{z}^{\tau})} (\bm{z}^W_i)^{\top}, \quad i = 1, \dots, n+1.
\end{equation}
For a given input $\bm{z}$, the path of the discretized SDE is generated iteratively:
\begin{equation}\label{eq: state process}
    \widehat{X}_{T_{i+1}}(\bm{z}) = \widehat{X}_{T_i}(\bm{z}) + b(T_i(\bm{z}^{\tau}), \widehat{X}_{T_i}(\bm{z}))\Delta T_{i+1}(\bm{z}^{\tau}) + \sigma_0\Delta W_{T_{i+1}}(\bm z),
\end{equation}
with $\widehat{X}_{T_0} = x_0$ for $0 \le i \le n$.

Finally, the level-specific function $\widehat{\psi}_n(\bm{z})$ is defined as:
\begin{equation}\label{eq: level estimator}
    \widehat \psi_n(\bm{z}) := e^{\beta T}\left[ g(\widehat{X}_{T_{n+1}}(\bm{z})) - g(\widehat{X}_{T_n}(\bm{z}))\1_{\left\{ n > 0\right\}}\right] \prod_{i =1}^n \mathcal{W}_i(\bm{z}),
\end{equation}
with the convention $\prod_{i=1}^0 = 1$. The weighting functions are:
\begin{equation}
    \mathcal{W}_i(\bm{z}) := \frac{\left(b(T_{i}(\bm{z}^{\tau}),\widehat{X}_{T_i}(\bm{z})) - b(T_{i-1}(\bm{z}^{\tau}),\widehat{X}_{T_{i-1}}(\bm{z}))\right)^{\top} (\sigma_0^{\top})^{-1}\Delta W_{T_{i+1}}(\bm{z})}{\beta \Delta T_{i+1}(\bm{z}^{\tau})}.
\end{equation}
For the level $n$ and corresponding sample size $m_n := [mp_n]$, let $\{(\boldsymbol{y}^{(k)})\}_{k=1}^{m_n}$ be a low-discrepancy point set in the unit cube $[0,1]^{(d+1)(n+1)}$. We set $\bm{z}^{(k)} = \Phi^{-1}(\bm{y}^{(k)})$ (applied component-wise). The MSTG estimator for a total sample parameter $m$ is then:
\begin{equation}\label{eq: mstg}
   \hat I_{m,N}^{\textrm{RQMC}} := \frac{1}{m}\sum_{n = 0}^N \sum_{k=1}^{m_n} \widehat\psi_n(\bm{z}^{(k)}).
\end{equation}

\subsection{Error Analysis}

This section is devoted to the error analysis of the MSTG estimator \eqref{eq: mstg}. To avoid overly cumbersome notation, our analysis focuses on the benchmark case where $T=1$ and $d=1$. The conclusions can be naturally extended to higher-dimensional spaces. Furthermore, we assume that the drift coefficient $b$ is time-homogeneous. This simplification is intended to streamline the technical details and highlight the main logic of the proof; the analysis for the non-homogeneous case follows an entirely analogous procedure. To this end, we impose the following smoothness conditions on the drift coefficient:

\begin{assumption}\label{assum: smooth drift}
    The drift coefficient is time-homogeneous, i.e., $b(t, x) = b(x)$. We assume $b$ and $ g$ are smooth with bounded derivatives of all orders; that is,  for any $k \ge 1$, $\frac{\dd^k}{\dd x^k} b(x)$, $ \frac{\dd^k}{\dd x^k} g(x)$ are continuous and there exists a constant $C_k > 0$ such that $\left| \frac{\dd^k}{\dd x^k} b(x) \right| + \left| \frac{\dd^k}{\dd x^k} g(x)\right| \le C_k$.
\end{assumption}

The RMSE between the MSTG estimator \eqref{eq: mstg} and the true value $\E[\widehat{\psi}] = \E[g(X_T)]$ can be decomposed as follows:
\begin{equation}\label{eq: mstg error decom}
    \begin{aligned}
        &\sqrt{\E\left[ \left| \hat I_{m,N}^{\textrm{RQMC}} - \E[g(X_T)]\right|^2\right]} \\
        & = \sqrt{\E\left[\left| \hat I_{m,N}^{\textrm{RQMC}} - \sum_{n=0}^\infty p_n \E[\widehat \psi\mid N_T = n]\right|^2\right]}\\
        &\le \underbrace{\sqrt{\E\left[\left| \hat I_{m,N}^{\textrm{RQMC}} - \sum_{n=0}^N p_n \E[\widehat \psi\mid N_T = n]\right|^2\right]}}_{\text{Sampling Error}} + \underbrace{\sum_{n = N+1}^{\infty}p_n \left| \E[\widehat \psi\mid N_T = n]\right|}_{\text{Truncation Error}}
    \end{aligned}
\end{equation}

In the following, we estimate the \textbf{truncation error} and the \textbf{sampling error} separately. We begin with a lemma providing an upper bound for the truncation error.

\begin{lemma}\label{lemma: mstg truncation error}
    Under Assumption \ref{assum: for exact}, there exists a constant $C$ such that:
    \begin{equation}
        \sum_{n = N+1}^{\infty}p_n \left| \E[\widehat \psi\mid N_T = n]\right| \le CT^{1/2}(N/(eT))^{-N}.
    \end{equation}
\end{lemma}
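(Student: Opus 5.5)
The plan is to bound each conditional expectation $\E[\widehat\psi\mid N_T=n]$ by $C^{n+1}$ times a factor that decays like $(n!)^{-1/2}$, or at least by $e^{\beta T}(C\sqrt{T})^{n}\cdot C T^{1/2}$ divided by suitable factorial moments of the time spacings. We then sum against $p_n=(\beta T)^n e^{-\beta T}/n!$ and use the Stirling bound $n!\ge (n/e)^n$ to extract $(N/(eT))^{-N}$.

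\textbf{Step 1 (per-level bound).} Work conditionally on $N_T=n$ and on the grid. Write $\widehat\psi\,\mathbb 1_{\{N_T=n\}}$ as $e^{\beta T}\,\Delta g\,\prod_{i=1}^n\mathcal W_i$, where $\Delta g=g(\widehat X_{T_{n+1}})-g(\widehat X_{T_n})$ for $n\ge1$. By the Lipschitz property of $g$ and boundedness of $b$,
\[
|\Delta g|\le C\bigl(\Delta T_{n+1}+|\sigma_0\Delta W_{T_{n+1}}|\bigr).
\]
By the H\"older/Lipschitz assumption on $b$,
\[
|b(T_i,\widehat X_{T_i})-b(T_{i-1},\widehat X_{T_{i-1}})|\le C\bigl(\Delta T_i^{1/2}+\Delta T_i+|\Delta W_{T_i}|\bigr).
\]
Hence $|\mathcal W_i|\le C\,\beta^{-1}\bigl(\Delta T_i^{1/2}+|\Delta W_{T_i}|\bigr)\,|\Delta W_{T_{i+1}}|/\Delta T_{i+1}$, using $\Delta T_i\le T$ and absorbing constants. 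Take expectations over the Gaussian increments, which are independent given the grid. Each increment $\Delta W_{T_j}$ appears in at most two adjacent factors, so its contribution is bounded by Gaussian moments $\E|\Delta W_{T_j}|^k\le C_k\Delta T_j^{k/2}$. The result is
\[
\bigl|\E[\widehat\psi\mid N_T=n,\text{grid}]\bigr|\le e^{\beta T}\beta^{-n}C^{n}\,\Delta T_{n+1}^{1/2}\prod_{i=1}^{n}\Delta T_{i}^{1/2}\,\Delta T_{i+1}^{-1/2}.
\]
The factor $\Delta T_{i}^{1/2}\Delta T_{i+1}^{-1/2}$ telescopes to $\Delta T_1^{1/2}$ times bounded terms, leaving a final bound of the form $C^{n+1}\beta^{-n}T^{1/2}$. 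One could also bound more crudely by Cauchy--Schwarz, since all that is needed is integrability of $\Delta T^{-1/2}$-type factors on the simplex.

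\textbf{Step 2 (expectation over the grid).} Use Lemma~\ref{lemma: conditional distribution of time diff}: given $N_T=n$, the spacings are uniform on $T\mathcal A$. If negative powers $\Delta T_{i+1}^{-1/2}$ survive in the bound, the Dirichlet integrals $\E\bigl[\prod_i (\Delta T_i/T)^{a_i}\bigr]=\Gamma(n+1)\prod_i\Gamma(1+a_i)/\Gamma(n+1+\sum_i a_i)$ with $a_i\ge -1/2$ are finite. They are bounded by $C^n$ times at most a polynomial factor in $n$, which still gives $|\E[\widehat\psi\mid N_T=n]|\le C_1 C_2^{n}\beta^{-n}e^{\beta T}T^{1/2}$.

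\textbf{Step 3 (summation).} Multiply by $p_n$. The powers $\beta^{n}$ and $e^{\beta T}$ cancel, giving a tail of $CT^{1/2}\sum_{n>N}(C_2T)^n/n!$. This is at most $CT^{1/2}(C_2T)^{N+1}e^{C_2T}/(N+1)!$, and $n!\ge(n/e)^n$ then yields $CT^{1/2}(N/(eT))^{-N}$ after absorbing $C_2^N$. That absorption is only legitimate if $C_2\le1$ or if the constant is allowed to grow geometrically, so in practice I would rescale to match the stated form or read the statement with the constant absorbed.

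\textbf{Main obstacle.} The hardest step is Step 1, where each $\mathcal W_i$ carries a singular $1/\Delta T_{i+1}$ that must be cancelled by $|\Delta W_{T_{i+1}}|\sim\Delta T_{i+1}^{1/2}$ and by the $\Delta T_{i+1}^{1/2}$ coming from the numerator of $\mathcal W_{i+1}$. The difficulty is that adjacent weights share the same Brownian increment, so the conditional expectation does not factor. I would handle this by conditioning successively backward in time, integrating out $\Delta W_{T_{n+1}}$ first, then $\Delta W_{T_n}$, and so on. At each stage I would apply H\"older's inequality with fixed exponents to keep the per-step constant uniform in $n$, so that the only $n$-dependence is geometric.
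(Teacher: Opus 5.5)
Your argument follows the paper's proof. Both use pathwise bounds on the weights from the H\"older/Lipschitz property of $b$ and the Lipschitz property of $g$, and Brownian scaling so that $\Delta T_{n+1}^{1/2}\prod_{i=1}^n\Delta T_i^{1/2}\Delta T_{i+1}^{-1/2}$ telescopes to $\Delta T_1^{1/2}\le T^{1/2}$. Both then cancel $e^{\beta T}\beta^{-n}$ against $p_n$ and finish with a Poisson tail estimate. The paper uses a Chernoff bound with $t=\log(N/T)$; your Lagrange remainder plus $n!\ge(n/e)^n$ does the same job.

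Two of your steps are heavier than necessary. Write $Z_j:=\Delta W_{T_j}/\sqrt{\Delta T_j}$, which are i.i.d.\ standard normal and independent of the grid, and let $C_1$ be the per-weight constant. Your absolute-value bounds then give, on $\{N_T=n\}$,
\[
|\widehat\psi|\le C\,e^{\beta T}(C_1/\beta)^{n}\,\Delta T_1^{1/2}\,(1+|Z_1|)\Bigl[\prod_{j=2}^{n}(1+|Z_j|)|Z_j|\Bigr](1+|Z_{n+1}|)|Z_{n+1}|.
\]
This factorizes exactly over independent variables, so the backward conditioning and H\"older argument of your ``main obstacle'' are not needed. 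Since every negative power of a spacing cancels, the Dirichlet integrals of Step~2 never arise. Taking expectations gives $|\E[\widehat\psi\mid N_T=n]|\le C\,T^{1/2}e^{\beta T}(C_2/\beta)^n$ with $C_2:=C_1\E[(1+|Z|)|Z|]$.

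The caveat you raise in Step~3 is genuine, and the paper's proof does not resolve it. The paper reaches the base $N/(eT)$ with an $N$-independent $C$ only because its level-$n$ bound $CT^{1/2}e^{\beta T}\beta^{-n}$ silently drops the factor $C_2^n$ contributed by the $n$ weights. It also replaces $|\widehat X_{T_i}-\widehat X_{T_{i-1}}|$ and $|\Delta W_{T_{i+1}}|$ by their separate conditional means inside a product of dependent factors; the factorization above repairs that part.

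Nothing in Assumption~\ref{assum: for exact} makes $C_2\le 1$, since $C_2$ involves the H\"older/Lipschitz constants of $b$, $\|\sigma_0^{-1}\|$, $|\sigma_0|$, and $\E[(1+|Z|)|Z|]>1$. Rescaling does not help either. The time change $t\mapsto ct$ with $\beta\mapsto\beta/c$ leaves the estimator, and hence the truncation error, unchanged, while it divides $C_2$ by $c$ and multiplies the horizon by $c$. Normalizing the per-step constant to $1$ therefore just turns the base into $N/(eC_2T)$.

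What your argument (and a corrected version of the paper's) actually proves is
\[
\sum_{n>N}p_n\bigl|\E[\widehat\psi\mid N_T=n]\bigr|\le C\,T^{1/2}\sum_{n>N}\frac{(C_2T)^n}{n!}\le C'\,T^{1/2}\bigl(N/(eC_2T)\bigr)^{-N}.
\]
This is still super-exponential and suffices for Theorem~\ref{them: mstg converge}. You should state the lemma in this form rather than absorb $C_2^N$ into the constant.
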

\begin{proof}
    See Section \ref{sec proof lemma: mstg truncation error}.
\end{proof}

Next, we derive the upper bound for the sampling error. Note that the sampling error admits the following decomposition:
\begin{equation}\label{eq: sample error bound 1}
    \begin{aligned}
        &\sqrt{\E\left[\left| \hat I_{m,N}^{\textrm{RQMC}} - \sum_{n=0}^N p_n \E[\widehat \psi\mid N_T = n]\right|^2\right]} \\
        &= \sqrt{\E\left[ \left|\frac{1}{m}\sum_{n=0}^{N}\sum_{k=1}^{m_n}\widehat\psi_n(\bm{z}^{(k)}) - \sum_{n=0}^Np_n\E[\widehat \psi\mid N_T = n]\right|^2\right]}\\
         & = \sqrt{\E\left[ \left|\sum_{n=0}^N\frac{m_n}{m} \frac{1}{m_n}\sum_{k=1}^{m_n}\widehat\psi_n(\bm{z}^{(k)}) - \sum_{n=0}^Np_n\E[\widehat \psi\mid N_T = n]\right|^2\right]}\\
         &\le \sum_{n=0}^N \frac{m_n}{m}\sqrt{\E\left[ \left| \frac{1}{m_n} \sum_{k=1}^{m_n}\widehat\psi_n(\bm{z}^{(k)}) - \E[\widehat \psi\mid N_T = n]\right|^2\right]} \\
         &\qquad+ \sum_{n=0}^N \sqrt{\E\left[ \left| \frac{m_n}{m} \E[\widehat \psi\mid N_T = n] - p_n \E[\widehat \psi\mid N_T = n]\right|^2\right]}
    \end{aligned}
\end{equation}
For the second term on the right-hand side, we have the following bound:
\begin{equation}\label{eq: sample error bound 2}
    \begin{aligned}
        &\sum_{n=0}^N \sqrt{\E\left[ \left| \frac{m_n}{m} \E[\widehat \psi\mid N_T = n] - p_n \E[\widehat \psi\mid N_T = n]\right|^2\right]} \\
        &= \sum_{n=0}^N\left| \frac{[mp_n]}{m} - \frac{mp_n}{m}\right| \left| \E[\widehat \psi\mid N_T = n]\right|\\  
        &\le \frac{1}{m}\sum_{n=0}^N \left| \E[\widehat \psi\mid N_T = n]\right|\le \frac{1}{m}\sum_{n=0}^N CT^{1/2}e^{\beta T} \beta^{-n}\\
        &\le C_N\frac{1}{m},
    \end{aligned}
\end{equation}
where for the first inequality, we used the fact that $|[mp_n]/m - p_n| \le 1/m$, for the second inequality, we used the upper bound of $\E[|\widehat{\psi}_n|]$ from \eqref{eq: upper bound of psi_n}.

Consequently, it remains to bound the RMSE of the sample mean at each level:
\begin{equation}\label{eq: level qmc error}
    \sqrt{\E\left[ \left| \frac{1}{m_n} \sum_{k=1}^{m_n}\widehat\psi_n(\bm{z}^{(k)}) -   \E[\widehat \psi\mid N_T = n]\right|^2\right]}.
\end{equation}
This is a standard RQMC error bound for a sample size $m_n$. 

Recall that the level-specific estimator has been formulated as a deterministic function of the Gaussian input vector, $\widehat{\psi}_n(\bm{z})$.  According to the RQMC theory for unbounded integrands established in \cite[Corollary 4.8]{ouyang2024achieving}, achieving a sampling error convergence rate of $\mathcal{O}(m_n^{-1+\eps})$ requires the integrand to satisfy a specific regularity condition. Therefore, it suffices to verify that the function $\widehat{\psi}_n(\bm{z})$ exhibits polynomial growth.

The remainder of this section is devoted to rigorously proving this property. We proceed systematically by establishing the growth bounds for each functional component: first for the stochastic time increments $\Delta T_i(\bm{z}^{\tau})$, then for the discrete state variables $\widehat{X}_{T_i}(\bm{z})$, and ultimately for the estimator $\widehat{\psi}_n(\bm{z})$ itself. Specifically, we will demonstrate that $\widehat{X}_{T_i}$, $\widehat{\psi}_n$, and all their associated mixed partial derivatives strictly belong to the polynomial growth function class $\mathcal{C}_p(\R^{2n+2})$.

\paragraph{Growth of the Stochastic Time Step}
We first examine the partial derivatives of the stochastic time step $\Delta T_i$ with respect to the underlying normal variables $z_j^\tau$.
\begin{lemma}\label{lemma: deri delta t growth}
    There exists a constant $C > 0$ such that for any $\boldsymbol u^{\tau} \subseteq 1{:}(n+1)$ and $1 \le i \le n+1$:
    \begin{equation}
        \left| \partial^{\boldsymbol u^{\tau}}_{\boldsymbol z^{\tau}}\Delta T_i\right| \le C\Delta T_i \prod_{j \in \boldsymbol{u}^\tau}|z_j^{\tau}|.
    \end{equation}
\end{lemma}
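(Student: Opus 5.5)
Write $h(z):=F_\tau^{-1}(\Phi(z)) = -\beta^{-1}\log(1-\Phi(z))$ and $S(\bm z^\tau):=\sum_{j=1}^{n+1}h(z_j^\tau)$, so that by \eqref{eq: time increment} $\Delta T_i = T\,h(z_i^\tau)/S$. The plan is to exploit the fact that $\partial^{\bm u^\tau}$ differentiates each coordinate at most once. Since $S$ is a sum of functions of single variables, all its mixed derivatives in two or more distinct coordinates vanish. The whole computation therefore reduces to first derivatives of the scalar function $h$, and the only analytic input needed is a one-dimensional bound on the logarithmic derivative $h'/h$.

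\textbf{Step 1 (scalar estimate).} We have $h'(z)=\varphi(z)/\bigl(\beta(1-\Phi(z))\bigr)>0$, where $\varphi$ is the standard normal density. I will show
\[
0< h'(z)/h(z)\le C(1+|z|).
\]
On compact sets this is clear by continuity and positivity of $h$. As $z\to-\infty$, $h(z)\sim\Phi(z)/\beta$ and $h'(z)\sim\varphi(z)/\beta$, so $h'/h\sim\varphi/\Phi\sim|z|$ by the Mills ratio asymptotics. As $z\to+\infty$, $h(z)\sim z^2/(2\beta)$ and $h'(z)\sim z/\beta$, so $h'/h\to0$. This tail analysis is the step I expect to need the most care, in particular making the Mills-ratio bounds $\varphi(z)/(1-\Phi(z))\le C(1+|z|)$ and $\varphi(z)/\Phi(z)\le C(1+|z|)$ explicit and uniform.

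\textbf{Step 2 (derivatives of $S^{-1}$).} Because the mixed derivatives of $S$ vanish, an induction on $|\bm v|$ gives, for any $\bm v\subseteq 1{:}(n+1)$,
\[
\partial^{\bm v}S^{-1}=(-1)^{|\bm v|}|\bm v|!\,S^{-1-|\bm v|}\prod_{j\in\bm v}h'(z_j^\tau).
\]
Since $S\ge h(z_j^\tau)$ for every $j$, this yields
\[
|\partial^{\bm v}S^{-1}|\le |\bm v|!\,S^{-1}\prod_{j\in\bm v}\frac{h'(z^\tau_j)}{h(z^\tau_j)}.
\]

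\textbf{Step 3 (Leibniz).} Apply the product rule to $T\,h(z_i^\tau)\cdot S^{-1}$. If $i\notin\bm u^\tau$, only $S^{-1}$ is differentiated. If $i\in\bm u^\tau$, there is an extra term in which $h(z_i^\tau)$ is replaced by $h'(z_i^\tau)=h(z_i^\tau)\cdot h'(z_i^\tau)/h(z_i^\tau)$ and $S^{-1}$ is differentiated over $\bm u^\tau\setminus\{i\}$. In both cases every term is bounded by a combinatorial constant times
\[
T\,h(z_i^\tau)S^{-1}\prod_{j\in\bm u^\tau}\frac{h'(z_j^\tau)}{h(z_j^\tau)}
=\Delta T_i\prod_{j\in\bm u^\tau}\frac{h'(z_j^\tau)}{h(z_j^\tau)}.
\]
Step 1 then gives $|\partial^{\bm u^\tau}\Delta T_i|\le C\,\Delta T_i\prod_{j\in\bm u^\tau}(1+|z_j^\tau|)$. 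The constant depends on $n$ through $|\bm u^\tau|!$, which is harmless at a fixed level.

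\textbf{Caveat on the form of the bound.} The literal factor $|z_j^\tau|$ cannot hold at $z_j^\tau=0$, since $\partial_{z_j}\Delta T_i\neq0$ there. I would therefore prove, and use, the lemma with $\max(1,|z_j^\tau|)$, or equivalently $1+|z_j^\tau|$, in place of $|z_j^\tau|$. This is all that is needed downstream, because the result only feeds into polynomial-growth bounds of the class $\mathcal{C}_p$.
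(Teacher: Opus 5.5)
Your proposal is correct and follows essentially the paper's route. Both arguments differentiate $h(z_i^\tau)/S$ explicitly, absorb each extra factor $S^{-1}$ using $S\ge h(z_j^\tau)$, and control the resulting ratio $h'/h$ by Mills-ratio tail bounds, with an extra product-rule term when $i\in\bm u^\tau$. Your caveat is also right: the paper's proof only treats $|z_j^\tau|>2$, and for $n\ge 1$ the derivative $\partial_{z_j^\tau}\Delta T_i$ is nonzero at $z_j^\tau=0$, so the literal factor $|z_j^\tau|$ is false there and must be replaced by $1+|z_j^\tau|$; that weaker form is all the later polynomial-growth arguments use.
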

\begin{proof}
    See Section \ref{sec proof lemma: deri delta t growth}.
\end{proof}

\paragraph{Polynomial Growth of State Variables}
We now consider the growth of the discrete state variables $\widehat{X}_{T_i}$ constructed on the stochastic time grid.
\begin{lemma} \label{lemma: deri x growth}
Under Assumptions \ref{assum: for exact} and \ref{assum: smooth drift}, for any $0 \le k \le n+1$, $\widehat{X}_{T_k} \in \mathcal{C}_p(\R^{2n+2})$. Furthermore, for all $\bm u^{\tau}, \bm u^W \subseteq 1{:}(n+1)$, the mixed partial derivatives satisfy $\partial^{\bm u^{\tau}}_{\bm z^{\tau}}\partial^{\bm u^W}_{\bm z^W} \widehat{X}_{T_k} \in \mathcal{C}_p(\R^{2n+2})$.
\end{lemma}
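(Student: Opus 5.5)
We argue by induction on $k$, following the template of Theorem \ref{thm: partial deri growth}. The statement we propagate is slightly stronger than membership in $\mathcal{C}_p(\R^{2n+2})$: for every pair $\bm u^{\tau}, \bm u^W \subseteq 1{:}(n+1)$ the mixed derivative $\partial^{\bm u^{\tau}}_{\bm z^{\tau}}\partial^{\bm u^W}_{\bm z^W} \widehat{X}_{T_k}$ exists, is continuous, and is bounded by a polynomial in $|\bm z|$. The base case $\widehat X_{T_0}=x_0$ is trivial, since every derivative of a constant vanishes.

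For the inductive step we write the recursion \eqref{eq: state process} (with $d=1$, time-homogeneous $b$) as
\[
\widehat X_{T_{k+1}} = \widehat X_{T_k} + b(\widehat X_{T_k})\,\Delta T_{k+1} + \sigma_0 \sqrt{\Delta T_{k+1}}\, z^W_{k+1},
\]
and treat the three summands separately.
\begin{enumerate}
\item The first summand is covered by the induction hypothesis.
\item For the second summand we use the Leibniz rule to split $\partial^{\bm u^\tau}\partial^{\bm u^W}$ over the product. Each derivative of $b(\widehat X_{T_k})$ is expanded by Fa\`a di Bruno's formula, exactly as in \eqref{eq: partial g(x)}. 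Assumption \ref{assum: smooth drift} bounds every $b^{(j)}$ for $j\ge1$. The factor $b$ itself is bounded by Assumption \ref{assum: for exact}, and in any case it grows at most linearly. The products $\prod_\lambda \partial^\lambda \widehat X_{T_k}$ are polynomially bounded by the induction hypothesis. The factor $\Delta T_{k+1}$ depends only on $\bm z^\tau$, and Lemma \ref{lemma: deri delta t growth} gives $|\partial^{\bm v}\Delta T_{k+1}|\le C\Delta T_{k+1}\prod_{j\in\bm v}|z^\tau_j|\le C T\prod_{j\in\bm v}|z^\tau_j|$, which is polynomial.
\item The third summand depends on $\bm z^W$ only through the factor $z^W_{k+1}$, which is linear. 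So only $\bm u^W\subseteq\{k+1\}$ gives a nonzero derivative, and the task reduces to bounding $\partial^{\bm v}_{\bm z^\tau}\sqrt{\Delta T_{k+1}}$.
\end{enumerate}
Sums and products of polynomially bounded functions are polynomially bounded, so the induction closes. Continuity of all mixed partials follows from the smoothness of $b$, of $F_\tau^{-1}\circ\Phi$, and of the square root on $(0,\infty)$; note that $\Delta T_i>0$ everywhere.

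The main obstacle is the square root in the third summand, because $\sqrt{\cdot}$ has unbounded derivatives near $0$ and $\Delta T_{k+1}$ can be arbitrarily small. We apply Fa\`a di Bruno to $h(x)=\sqrt{x}$, whose $j$-th derivative is $c_j x^{1/2-j}$:
\[
\partial^{\bm v}\sqrt{\Delta T_{k+1}} = \sum_{w\in\pi(\bm v)} c_{|w|}\,\Delta T_{k+1}^{1/2-|w|}\prod_{\lambda\in w}\partial^{\lambda}\Delta T_{k+1}.
\]
Each of the $|w|$ factors in the product is bounded by $C\Delta T_{k+1}\prod_{j\in\lambda}|z^\tau_j|$ via Lemma \ref{lemma: deri delta t growth}. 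These $|w|$ powers of $\Delta T_{k+1}$ cancel the negative power $\Delta T_{k+1}^{-|w|}$ exactly, leaving
\[
\bigl|\partial^{\bm v}\sqrt{\Delta T_{k+1}}\bigr| \le C\sqrt{\Delta T_{k+1}}\prod_{j\in\bm v}|z^\tau_j| \le C T^{1/2}\prod_{j\in\bm v}|z^\tau_j|.
\]
This relative-derivative structure, $\partial^{\bm v}\Delta T_i=\Delta T_i\cdot(\text{polynomial})$, is precisely why Lemma \ref{lemma: deri delta t growth} was stated multiplicatively. Here the blocks of a partition are disjoint, so $\prod_{\lambda\in w}\prod_{j\in\lambda}|z^\tau_j|=\prod_{j\in\bm v}|z^\tau_j|$. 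The same observation handles the second summand without needing any lower bound on $\Delta T$.

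Finally, a fixed $n$ involves only finitely many index sets, so we can take the maximum of the constants $M,B,K$ over all of them. This gives uniform constants over all $\bm u^\tau,\bm u^W$ and all $0\le k\le n+1$.
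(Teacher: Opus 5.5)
Your proof is correct. For the pure $\bm z^\tau$-derivatives it coincides with Step~2 of the paper's proof. There, too, one differentiates the one-step recursion, applies Leibniz and Fa\`a di Bruno to $b(\widehat X_{T_{k-1}})\Delta T_k$, and uses the cancellation $\Delta T_k^{1/2-|w|}\cdot\Delta T_k^{|w|}$ supplied by Lemma~\ref{lemma: deri delta t growth}, exactly as in \eqref{eq:sqrt_derivative}.

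The genuine difference is how you treat the $\bm z^W$-derivatives. The paper proceeds in two stages:
\begin{itemize}
\item It bounds $\widehat X_{T_k}$ itself separately, by the explicit product estimate \eqref{eq:state_bound_final}.
\item It then runs a second induction on $|\bm u^W|$, using the tangent-process representation $\partial_{z_j^W}\widehat X_{T_k}=J_{k,j}\sigma_0\sqrt{\Delta T_j}$ with $J_{k,j}=\prod_{i=j+1}^k(1+b'(\widehat X_{T_{i-1}})\Delta T_i)$, and differentiates that product as in the proof of Theorem~\ref{thm: partial deri growth}.
\end{itemize}
You instead carry all pairs $(\bm u^\tau,\bm u^W)$ through a single induction on $k$. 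This is legitimate for two reasons. First, $\Delta T_{k+1}$ does not depend on $\bm z^W$ and the noise term is linear in $z^W_{k+1}$, so the only new object at each step is $\partial^{\bm v}\sqrt{\Delta T_{k+1}}$. Second, every derivative of $\widehat X_{T_k}$ produced by Fa\`a di Bruno is already covered by the hypothesis at level $k$.

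Your organization is shorter and dispenses with both the Jacobian and the nested induction. The paper's route buys an explicit formula for the $\bm z^W$-sensitivities, from which one can read off, e.g., $|J_{k,j}|\le e^{CT}$ and hence boundedness of first $\bm z^W$-derivatives. It also gives an explicit polynomial degree for $\widehat X_{T_k}$. The lemma needs neither.

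One caveat you inherit from Lemma~\ref{lemma: deri delta t growth}. The factor $\prod_j|z_j^\tau|$ there, and in your displayed bound for $\partial^{\bm v}\sqrt{\Delta T_{k+1}}$, should read $\prod_j(1+|z_j^\tau|)$. Derivatives of $\Delta T_i$ do not vanish at $z_j^\tau=0$, and the lemma's proof only controls large $|z_j^\tau|$. With this correction every step of your argument goes through unchanged.
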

\begin{proof}
    See Section \ref{sec proof lemma: deri x growth}.
\end{proof}

\paragraph{Polynomial Growth of the function $\widehat{\psi}_n$}
For $n \ge 1$, we recall the expression for the stochastic time grid estimator $\widehat \psi_n$:
\begin{equation}\label{eq: psi expression}
    \begin{aligned}
    \widehat \psi_n 
    &= e^{\beta T}\left( g(\widehat X_{T_{n+1}}) - g(\widehat X_{T_n})\right) \prod_{i = 1}^n \frac{\left( b(\widehat{X}_{T_i}) - b(\widehat{X}_{T_{i-1}})\right)(\sigma_0)^{-1}\Delta W_{T_{i+1}}}{\beta \Delta T_{i+1}}\\
    & = (\beta\sigma_0)^{-n}e^{\beta T}\big(\prod_{i=1}^n z^{W}_{i+1}\big)\left( b(\widehat{X}_{T_1}) - b(\widehat{X}_{T_0})\right)\frac{ g(\widehat X_{T_{n+1}}) - g(\widehat X_{T_n})}{\sqrt{\Delta T_{n+1}}}\prod_{i = 2}^n\frac{b(\widehat{X}_{T_i}) - b(\widehat{X}_{T_{i-1}})}{\sqrt{\Delta T_i}}.
    \end{aligned}
\end{equation}
From this expression, it is evident that we only need to prove that the derivatives of terms such as $\frac{f(\widehat{X}_{T_i}) - f(\widehat{X}_{T_{i-1}})}{\sqrt{\Delta T_i}}$ belong to the class $\mathcal{C}_p(\R^{2n+2})$ (Here, $ f = g$ or $ b$).
\begin{lemma}\label{lemma: deri diff growth}
    Under the conditions in Assumption \ref{assum: smooth drift}, let $ f = b$ or $ g$, then for any $1 \le i \le n+1$ and $\bm u^{\tau}, \bm u^{W} \subseteq 1{:}(n+1)$, we have
    \begin{equation}\label{eq: diff target}
        \partial^{\bm u^{\tau}}_{\bm z^{\tau}}\partial^{\bm u^W}_{\bm z^W}\frac{f(\widehat{X}_{T_i}) - f(\widehat{X}_{T_{i-1}})}{\sqrt{\Delta T_i}}  \in \mathcal{C}_p(\R^{2n+2}).
    \end{equation}
\end{lemma}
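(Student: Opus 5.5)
The difficulty is the factor $1/\sqrt{\Delta T_i}$, which is unbounded as $\Delta T_i\to 0$. It is therefore not polynomially bounded in $\bm z$ on its own. The plan is to absorb it into the difference $f(\widehat X_{T_i})-f(\widehat X_{T_{i-1}})$ through an integral (Taylor) representation, so that the singular factor cancels exactly. By \eqref{eq: state process},
\[
\widehat X_{T_i}-\widehat X_{T_{i-1}} = b(\widehat X_{T_{i-1}})\Delta T_i + \sigma_0\sqrt{\Delta T_i}\, z^W_i = \sqrt{\Delta T_i}\,D_i,\qquad D_i := b(\widehat X_{T_{i-1}})\sqrt{\Delta T_i}+\sigma_0 z^W_i .
\]
The fundamental theorem of calculus then gives
\[
\frac{f(\widehat{X}_{T_i}) - f(\widehat{X}_{T_{i-1}})}{\sqrt{\Delta T_i}} = D_i\int_0^1 f'\bigl(\widehat X_{T_{i-1}}+\theta\sqrt{\Delta T_i}\,D_i\bigr)\,\dd\theta .
\]
With the singularity removed, it suffices to show that every mixed derivative $\partial^{\bm u^\tau}_{\bm z^\tau}\partial^{\bm u^W}_{\bm z^W}$ of the right-hand side lies in $\mathcal{C}_p(\R^{2n+2})$. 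Differentiation under the integral is justified because $f'$ has bounded continuous derivatives of all orders (Assumption \ref{assum: smooth drift}) and the integrand is smooth in $\bm z$ for fixed $\theta$.

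\textbf{Building blocks.} We need polynomial growth for $\sqrt{\Delta T_i}$, $\widehat X_{T_{i-1}}$, $D_i$ and all their mixed derivatives.
\begin{itemize}
\item Lemma \ref{lemma: deri x growth} handles $\widehat X_{T_{i-1}}$.
\item For $\sqrt{\Delta T_i}$, the chain rule (Fa\`a di Bruno) expresses $\partial^{\bm u^\tau}\sqrt{\Delta T_i}$ as a sum of terms $c\,\Delta T_i^{1/2-|w|}\prod_{\lambda\in w}\partial^\lambda\Delta T_i$. By Lemma \ref{lemma: deri delta t growth}, each factor satisfies $|\partial^\lambda \Delta T_i|\le C\Delta T_i\prod_{j\in\lambda}|z^\tau_j|$. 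The $|w|$ factors of $\Delta T_i$ cancel $\Delta T_i^{-|w|}$, leaving a bound $C\Delta T_i^{1/2}\prod_{j\in\bm u^\tau}|z_j^\tau|\le C\,T^{1/2}\prod_{j\in\bm u^\tau}|z^\tau_j|$, which is polynomial. This cancellation is the one genuinely delicate point. It works only because Lemma \ref{lemma: deri delta t growth} gives a \emph{relative} bound proportional to $\Delta T_i$, so I would state it carefully as a separate claim.
\item $\sqrt{\Delta T_i}$ does not depend on $\bm z^W$, so its $\bm z^W$-derivatives vanish unless $\bm u^W=\emptyset$.
\item $D_i$ is then a product and sum of these quantities with $b\circ\widehat X_{T_{i-1}}$ and the linear term $\sigma_0 z_i^W$. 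Its mixed derivatives are polynomially bounded by the Leibniz rule and Fa\`a di Bruno's formula, using that $b$ has bounded derivatives (so $b$ itself has linear growth).
\end{itemize}

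\textbf{Assembly.} For fixed $\theta\in[0,1]$, write $Y_\theta:=\widehat X_{T_{i-1}}+\theta\sqrt{\Delta T_i}D_i$. This is a polynomial combination of the building blocks, so all its mixed derivatives have polynomial bounds that are uniform in $\theta$. Applying Fa\`a di Bruno's formula to $f'(Y_\theta)$ produces terms $f^{(1+|w|)}(Y_\theta)\prod_{\lambda\in w}\partial^\lambda Y_\theta$. Here $f^{(k)}$ is bounded for $k\ge 1$, so each term is polynomially bounded uniformly in $\theta$. The Leibniz rule for the product with $D_i$ then shows that every mixed derivative of the integrand is bounded by $M|\bm z|^K+B$ with constants independent of $\theta$. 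Integrating over $\theta\in[0,1]$ preserves this bound and yields \eqref{eq: diff target}.

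The main obstacle is the first step: recognising that the quotient must be rewritten via the integral representation, and checking that derivatives of $\sqrt{\Delta T_i}$ never produce negative powers of $\Delta T_i$. Once that cancellation is secured through Lemma \ref{lemma: deri delta t growth}, the rest is routine bookkeeping with the Leibniz rule and Fa\`a di Bruno's formula.
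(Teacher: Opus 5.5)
Your proof is correct, but it takes a different route from the paper.

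\textbf{The paper's route.} The paper keeps the quotient as it is and applies the Leibniz rule to split off $(\Delta T_i)^{-1/2}$. It bounds the derivatives of $(\Delta T_i)^{-1/2}$ by $C(\Delta T_i)^{-1/2}$ times a polynomial in $\bm z^\tau$; this already needs Lemma \ref{lemma: deri delta t growth} to cancel the powers $(\Delta T_i)^{-|w|}$. It then shows that $(\Delta T_i)^{-1/2}\partial^{\bm v}_{\bm z^\tau}\partial^{\bm u^W}_{\bm z^W}\bigl(f(\widehat X_{T_i})-f(\widehat X_{T_{i-1}})\bigr)$ is polynomially bounded, term by term, in three steps:
\begin{enumerate}
\item it expands both derivatives by Fa\`a di Bruno's formula and adds and subtracts a cross term;
\item it uses the Lipschitz continuity of $f^{(|w|)}$ to control $(\Delta T_i)^{-1/2}|\Delta X_i|$;
\item it expands the difference of products over nonempty subsets $S\subseteq w$, noting that every mixed derivative of $\Delta X_i$ carries a factor $\sqrt{\Delta T_i}$.
\end{enumerate}

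\textbf{Your route.} Your representation $D_i\int_0^1 f'(Y_\theta)\,\dd\theta$ performs the same cancellation once, before any differentiation. After that, no negative power of $\Delta T_i$ appears. The claim then reduces to closure of polynomially bounded smooth functions under sums, products, and composition with $f'$. You only need the positive-power estimate for $\partial^{\bm u^\tau}_{\bm z^\tau}\sqrt{\Delta T_i}$ (the paper's \eqref{eq:sqrt_derivative}) and Lemma \ref{lemma: deri x growth} applied to $\widehat X_{T_{i-1}}$ alone.

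\textbf{What each buys.} Your argument is shorter and has less bookkeeping. It also extends directly to $d>1$ by replacing $f'$ with $\nabla f$. It extends as well to $f$ whose derivatives merely have polynomial growth, whereas the paper's Lipschitz step uses boundedness of $f^{(|w|+1)}$. The paper's version avoids an integral representation and never leaves the discrete increments.

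\textbf{One small repair.} Your stated reason for differentiating under the integral sign (smoothness in $\bm z$ for each fixed $\theta$) is not quite sufficient on its own. The correct justification is that the integrand is jointly smooth in $(\theta,\bm z)$ on $[0,1]\times\R^{2n+2}$. Your own uniform-in-$\theta$ polynomial bounds then supply the required local domination, so the step goes through.
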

\begin{proof}
    See Section \ref{sec proof lemma: deri diff growth}.
\end{proof}

Combining Lemmas \ref{lemma: deri delta t growth}, \ref{lemma: deri x growth}, and \ref{lemma: deri diff growth} with the expression for $\widehat{\psi}_n$ \eqref{eq: psi expression}, we obtain the following theorem:
\begin{theorem}\label{them: mstg qmc error}
    Under Assumptions \ref{assum: for exact} and \ref{assum: smooth drift}, there exist constants $M,B,K > 0$ such that:
    \begin{equation}
        \widehat\psi_n \in G_p(M,B,K).
    \end{equation}
\end{theorem}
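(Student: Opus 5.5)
The proof rests on the product representation \eqref{eq: psi expression}, with $d=T=1$ and $n\ge1$ fixed. The case $n=0$ is immediate, since $\widehat\psi_0=e^{\beta}g(\widehat X_{T_1})$ with $\widehat X_{T_1}=x_0+b(x_0)+\sigma_0 z^W_1$. Its mixed partials are polynomially bounded by Assumption \ref{assum: smooth drift}.

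For $n\ge1$, \eqref{eq: psi expression} writes $\widehat\psi_n$ as a product of $n+2$ factors:
\begin{itemize}
\item the constant $(\beta\sigma_0)^{-n}e^{\beta}$;
\item the monomial $P(\bm z):=\prod_{i=1}^n z^W_{i+1}$;
\item the bounded-difference factor $D_1:=b(\widehat X_{T_1})-b(x_0)$;
\item the quotients $Q_i:=\bigl(b(\widehat X_{T_i})-b(\widehat X_{T_{i-1}})\bigr)/\sqrt{\Delta T_i}$ for $2\le i\le n$;
\item the terminal quotient $Q_{n+1}:=\bigl(g(\widehat X_{T_{n+1}})-g(\widehat X_{T_n})\bigr)/\sqrt{\Delta T_{n+1}}$.
\end{itemize}

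The plan is to show that each factor, together with all its mixed partial derivatives $\partial^{\bm u^\tau}_{\bm z^\tau}\partial^{\bm u^W}_{\bm z^W}$, lies in $\mathcal{C}_p(\R^{2n+2})$. The general Leibniz rule for mixed partials of a product then does the rest. For $\bm u\subseteq 1{:}(2n+2)$ it gives
\begin{equation*}
\partial^{\bm u}\Bigl(\prod_{\ell} F_\ell\Bigr)=\sum_{\bm u=\bigsqcup_\ell \bm v_\ell}\prod_\ell \partial^{\bm v_\ell}F_\ell ,
\end{equation*}
a finite sum (over ordered partitions of $\bm u$ into disjoint pieces) of products of polynomially bounded functions. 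Since $\mathcal{C}_p$ is closed under finite sums and products, each $\partial^{\bm u}\widehat\psi_n$ is polynomially bounded. Taking $M,B,K$ as the maxima over the finitely many $\bm u$ then yields $\widehat\psi_n\in G_p(M,B,K)$. Continuity of the mixed partials, i.e. membership in $\mathcal S^{2n+2}(\R^{2n+2})$, follows because every building block is smooth. The maps $z\mapsto F_\tau^{-1}\circ\Phi(z)=-\beta^{-1}\log(1-\Phi(z))$ are smooth and positive, so the normalizing sum never vanishes; $b,g$ are $C^\infty$; and the recursion \eqref{eq: state process} composes smooth maps.

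The factor-by-factor verification goes as follows. $P$ is a polynomial, so it is trivially in the class. The factor $D_1$ is covered by Lemma \ref{lemma: deri x growth} combined with Fa\`a di Bruno's formula, exactly as in the proof of Theorem \ref{thm: qmc error}: derivatives of $b$ are bounded, and the partials of $\widehat X_{T_1}$ are in $\mathcal{C}_p$. The quotients $Q_i$ ($2\le i\le n$) with $f=b$, and $Q_{n+1}$ with $f=g$, are precisely the objects treated in Lemma \ref{lemma: deri diff growth}. Lemma \ref{lemma: deri delta t growth} enters only through that lemma.

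The main obstacle is conceptual rather than computational: the factor $1/\sqrt{\Delta T_i}$ is unbounded as $\Delta T_i\to0$, i.e. as $z_i^\tau\to-\infty$ or as other $z^\tau_j\to\infty$. So one cannot bound the quotients by bounding numerator and denominator separately. The proof must therefore route every appearance of $\Delta T_i^{-1/2}$ through Lemma \ref{lemma: deri diff growth}, which absorbs it via $\widehat X_{T_i}-\widehat X_{T_{i-1}}=b\,\Delta T_i+\sigma_0\sqrt{\Delta T_i}z^W_i$. This is exactly why \eqref{eq: psi expression} pairs each difference with the square root of its own interval. I would double-check that this rewriting is correct, namely that after substituting $\Delta W_{T_{i+1}}=\sqrt{\Delta T_{i+1}}z^W_{i+1}$, the factor $\Delta T_{i+1}^{-1/2}$ coming from weight $\mathcal W_i$ is reassigned to the next difference. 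The index shift leaves $D_1$ without a denominator and gives $Q_{n+1}$ its denominator $\sqrt{\Delta T_{n+1}}$, so no bare negative power of $\Delta T$ is left over. Once that bookkeeping is confirmed, the theorem reduces to the product-rule argument above.
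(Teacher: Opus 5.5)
Your proposal is correct and takes the same route as the paper. The paper gets the theorem in one line by combining the factorization \eqref{eq: psi expression} with Lemmas \ref{lemma: deri delta t growth}, \ref{lemma: deri x growth} and \ref{lemma: deri diff growth}, and your Leibniz-rule argument over the factors is that combination written out; the details you add (the case $n=0$, the factor $b(\widehat X_{T_1})-b(x_0)$ via Lemma \ref{lemma: deri x growth} and Fa\`a di Bruno, the check that each $\Delta T_{i+1}^{-1/2}$ is correctly shifted onto the next difference, and continuity of the mixed partials) are all correct and are left implicit in the paper.
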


If $\{\bm y^{(k)} = (\bm y^{\tau,(k)}, \bm y^{W,(k)})\}_{k=1}^{m_n}$ are the first $m_n$ points of a Owen-scrambled Sobol' points, they satisfy the discrepancy bound $D^{*}_{m_n} \le C_n \frac{(\log m_n)^{2(n+1)}}{m_n}$ with probability 1. Based on \cite[Corollary 4.8]{ouyang2024achieving}, we arrive at the following convergence result.

\begin{theorem}\label{them: mstg converge}
    Under Assumptions \ref{assum: for exact} and \ref{assum: smooth drift}, if we use RQMC point set for each level $n$, then the RMSE of the level-specific estimator is bounded by:
    \begin{equation}\label{eq: mstg level error}
        \sqrt{\E\left[ \left| \frac{1}{m_n} \sum_{k=1}^{m_n}\widehat\psi_n(\bm{z}^{(k)}) - \E[\widehat\psi \mid N_T = n]\right|^2\right]} \le C_n \frac{(\log m_n)^{3(n+1) + K_n}}{m_n}.
    \end{equation}
    Furthermore, if $m_n = [mp_n]$, then  there are constants $C(\eps,N), C > 0$ such that for any $ \eps > 0$,
    \begin{equation}\label{eq: final error bound}
        \sqrt{\E\left[ \left| \hat I_{m,N}^{RQMC} - \E[g(X_T)]\right|^2\right]}  \le C(\eps,N)m^{-1+\eps} + C(N/(eT))^{-N}.
    \end{equation}
\end{theorem}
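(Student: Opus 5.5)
The proof has two parts: a per-level RQMC bound, and then assembly of the overall error through the decomposition \eqref{eq: mstg error decom}.

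\textbf{Per-level bound.} By Theorem \ref{them: mstg qmc error}, for each fixed $n$ the integrand $\widehat\psi_n$ lies in $G_p(M,B,K)$ on $\R^{2(n+1)}$ (recall $d=1$). The constants may depend on $n$; this is harmless because only $n\le N$ is used.

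The point set at level $n$ is an RQMC point set of dimension $s=2(n+1)$ and size $m_n$. Its star discrepancy is therefore almost surely at most $C_s(\log m_n)^{s-1}/m_n$. We then invoke \cite[Corollary 4.8]{ouyang2024achieving} with $h=\widehat\psi_n$ composed with $\Phi^{-1}$. The mechanism of that result is as follows.
\begin{itemize}
\item Truncate the cube to $[\delta,1-\delta]^s$ with $\delta\asymp 1/m_n$.
\item Apply Koksma--Hlawka on the truncated cube. The Hardy--Krause variation there is controlled by integrals of $|\partial^{\uu}\widehat\psi_n|\prod_{j\in\uu}|\partial_{y_j}\Phi^{-1}|$. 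Polynomial growth turns these into powers of $\sqrt{\log m_n}$, which yields the extra factor $(\log m_n)^{K_n}$.
\item Bound the boundary contribution using the uniform marginals of the RQMC points together with Gaussian tails. This costs only $\mathcal{O}(m_n^{-1})$ up to logarithmic factors.
\end{itemize}
Squaring the almost-sure bound and taking expectations gives \eqref{eq: mstg level error}, with the exponent $3(n+1)+K_n$ absorbing $s-1$ and the variation logarithms. The unbiasedness of the level estimator, that is, its mean equals $\E[\widehat\psi\mid N_T=n]$, follows from condition (i) of Definition \ref{def:rqmc point set} together with Lemmas \ref{lemma: conditional distribution of time diff} and \ref{lemma: normalized exponential}.

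\textbf{Assembly.} We split the error as in \eqref{eq: mstg error decom}.
\begin{itemize}
\item The truncation term is bounded by Lemma \ref{lemma: mstg truncation error} as $CT^{1/2}(N/(eT))^{-N}$.
\item For the sampling term we use \eqref{eq: sample error bound 1}. Its rounding part is $C_N/m$ by \eqref{eq: sample error bound 2}.
\item The main part is $\sum_{n=0}^N \frac{m_n}{m}C_n(\log m_n)^{3(n+1)+K_n}/m_n=\frac1m\sum_{n\le N}C_n(\log m_n)^{3(n+1)+K_n}$. Since $m_n\le m$ and $(\log m)^{a}\le C(a,\eps)m^{\eps}$ for any fixed $a$, this is at most $C(\eps,N)m^{-1+\eps}$.
\end{itemize}
Adding the two contributions gives \eqref{eq: final error bound}.

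\textbf{Expected obstacle.} The delicate step is levels with $m_n=[mp_n]=0$ or very small. These occur when $p_n<1/m$, since $p_n$ decays super-exponentially in $n$. In that case the per-level bound is vacuous, but that level contributes nothing to the sum. Its missing mass $p_n|\E[\widehat\psi\mid N_T=n]|\le 1/m\cdot C\beta^{-n}e^{\beta T}$ is already covered by the rounding estimate \eqref{eq: sample error bound 2}. I will handle this by restricting the main sum to levels with $m_n\ge 2$ and noting that the constants $C_n$ for $n\le N$ are finitely many, so they can be absorbed into $C(\eps,N)$.
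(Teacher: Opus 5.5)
Your proposal is correct and follows the same route as the paper. Theorem \ref{them: mstg qmc error} together with \cite[Corollary 4.8]{ouyang2024achieving} gives the per-level bound, and assembling it via \eqref{eq: mstg error decom}, \eqref{eq: sample error bound 1}--\eqref{eq: sample error bound 2} and Lemma \ref{lemma: mstg truncation error}, finished by $(\log m_n)^a\le(\log m)^a\le C(a,\eps)m^{\eps}$, is exactly the paper's argument.

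Your separate treatment of levels with $m_n\le 1$ is a refinement the paper glosses over, since there the stated per-level bound is undefined or degenerate ($\log m_n\le 0$). To complete it, note that a level with $m_n=1$ contributes $\frac1m\sqrt{\E[|\widehat\psi_n(\bm z)-\E[\widehat\psi\mid N_T=n]|^2]}=O(1/m)$, which is finite because $\widehat\psi_n$ has polynomial growth.
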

\begin{proof}
According to Theorem \ref{them: mstg qmc error}, for each level $n$, there exist constants $M_n, B_n, K_n > 0$, such that
\begin{equation}
\widehat\psi_n \in G_p(M_n,B_n,K_n).
\end{equation}
Equation \eqref{eq: mstg level error} then follows by applying \cite[Corollary 4.8]{ouyang2024achieving}. Next, we prove equation \eqref{eq: final error bound}. Recalling the error analysis of the estimator \eqref{eq: mstg error decom}, and using Lemma \ref{lemma: mstg truncation error} to bound the truncation error, it suffices to prove the convergence order of the sampling error. Recalling the previous decomposition and upper bound estimates for the sampling error in equations \eqref{eq: sample error bound 1} and \eqref{eq: sample error bound 2}, and again applying Theorem \ref{them: mstg qmc error}, we have
\begin{equation}
        \begin{aligned}
            &\sqrt{\E\left[\left| \hat I_{m,N}^{\textrm{RQMC}} - \sum_{n=0}^N p_n \E[\widehat \psi\mid N_T = n]\right|^2\right]}\\
         &\le \sum_{n=0}^N \frac{m_n}{m}\sqrt{\E\left[ \left| \frac{1}{m_n} \sum_{k=1}^{m_n}\widehat\psi_n(\bm{z}^{(k)}) - \E[\widehat \psi\mid N_T = n]\right|^2\right]} + C_N\frac{1}{m}\\
         &\le \sum_{n=0}^N \frac{m_n}{m} C_n\frac{(\log m_n)^{3(n+1) + K_n}}{m_n} + C_N\frac{1}{m}\\
         &  =   \frac{\sum_{n = 0}^N C_n(\log m_n)^{3(n+1) + K_n}}{m} + C_N\frac{1}{m}.
        \end{aligned}
    \end{equation}
This completes the proof.
\end{proof}

Comparing Theorem \ref{thm: qmc error} with Theorem \ref{them: mstg converge} clarifies the mechanism by which the MSTG construction improves the efficiency of RQMC sampling for SDE simulation.

\paragraph{Discretization and truncation errors}
Theorem \ref{thm: qmc error} shows that the bias of the classical EM estimator is of order $\mathcal{O}(N_{\mathrm{EM}}^{-1})$, where $N_{\mathrm{EM}}$ denotes the number of EM time steps. Hence, reducing the bias to a prescribed level $\delta$ requires $N_{\mathrm{EM}}=\mathcal{O}(\delta^{-1})$ time steps, and therefore an input dimension proportional to $dN_{\mathrm{EM}}$. By contrast, the truncation error of the MSTG estimator, given by the second term in \eqref{eq: final error bound}, is of order $\mathcal{O}((N_{\mathrm{MSTG}}/(eT))^{-N_{\mathrm{MSTG}}})$, where $N_{\mathrm{MSTG}}$ is the maximal truncation level. This super-exponential decay implies that the truncation level required for the same tolerance grows only mildly with $\delta^{-1}$.

\begin{figure}[htbp]
    \centering
    \includegraphics[width=0.8\textwidth]{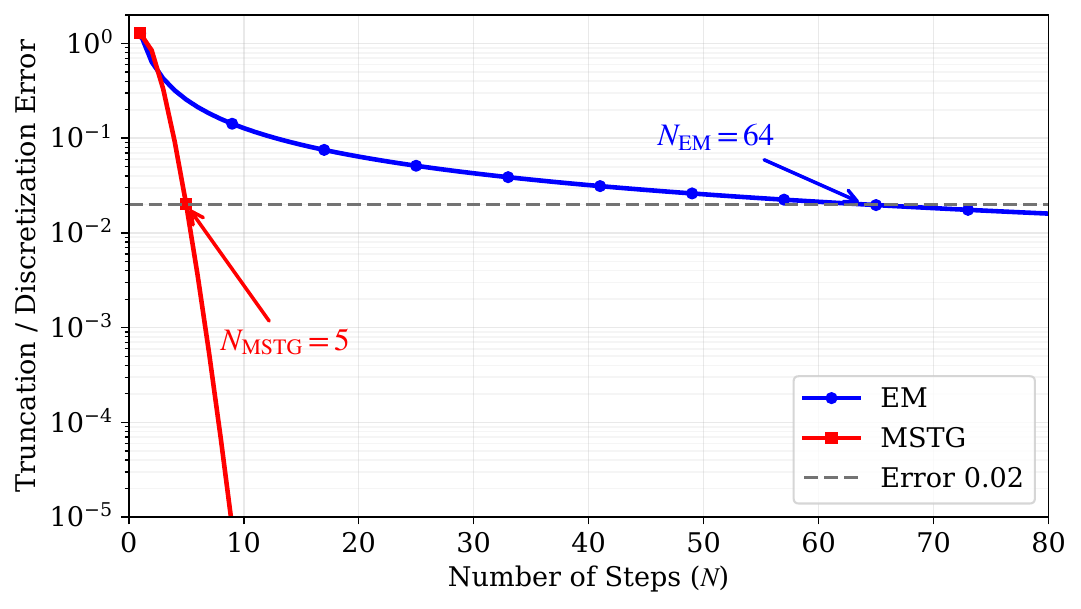} 
    \caption{Comparison of the tail behavior of discretization/truncation errors at $\beta=1$. The figure contrasts the polynomial decay of the EM scheme (blue circles) with the super-exponential decay of the exact simulation method (red squares) as a function of the dimension/steps $N$ on a semi-log scale.}
    \label{fig:tail_comparison}
\end{figure}

Figure \ref{fig:tail_comparison} illustrates this difference. The EM bias decreases according to a slow algebraic rate, whereas the MSTG truncation error decreases at a super-exponential rate. For example, for the tolerance level displayed in the figure, the EM discretization requires many more time steps than the MSTG truncation level. Since the input dimension of the EM-QMC estimator is proportional to the number of time steps, this difference directly affects the dimension in which the RQMC rule must operate.

\paragraph{Dimension reduction for RQMC}
The practical performance of RQMC depends not only on smoothness but also on how the integrand depends on the input coordinates. For the EM scheme, decreasing the bias forces the RQMC point set to be used in dimension $dN_{\mathrm{EM}}$. In the MSTG estimator, the contribution at level $n$ is integrated in dimension $(d+1)(n+1)$, and the super-exponential tail bound permits a small maximal level $N_{\mathrm{MSTG}}$. Thus the original high-dimensional fixed-grid integration problem is replaced by a finite weighted sum of lower-dimensional integrations. This is the dimension-reduction mechanism exploited below.

\section{Numerical Experiments}
\label{sec:numerical_experiments}

This section reports numerical experiments for the MSTG estimator combined with RQMC sampling. We compare the root mean square errors (RMSE) of four estimators:
\begin{itemize}
    \item \textbf{EM-MC}: the Euler-Maruyama estimator with standard Monte Carlo sampling;
    \item \textbf{EM-QMC}: the Euler-Maruyama estimator with Owen-scrambled Sobol' points;
    \item \textbf{MSTG-MC}: the multilevel stochastic time-grid estimator with standard Monte Carlo sampling;
    \item \textbf{MSTG-QMC}: the multilevel stochastic time-grid estimator with Owen-scrambled Sobol' points.
\end{itemize}

\subsection{One-dimensional SDE}
We first consider the benchmark example from \cite{henry-labordere2017Unbiased}. The target is the European-type payoff $\E[g(X_T)]$ with $g(x)=(e^x-1)^+$, where $X$ solves
\begin{equation}\label{eq: numerical sde}
    \dd X_t = \left( 0.1\sqrt{M_b \land e^{X_t}} - \frac{1}{8}\right)\dd t + \frac{1}{2} \dd W_t, \quad X_0 = 0,
\end{equation}
with $M_b=100$, $T=1$, and Poisson intensity $\beta=1$. Since no closed-form reference value is available, we use the high-accuracy MLMC value reported in \cite{henry-labordere2017Unbiased}, namely $\E[g(X_T)]\approx 0.205638$, as the benchmark. For all RQMC estimators we use Owen-scrambled Sobol' points, and RMSEs are estimated from 16 independent randomizations.

We examine two regimes. First, we set $N_{\mathrm{EM}}=10$ and $N_{\mathrm{MSTG}}=4$. This choice gives the same actual integration dimension for the two RQMC estimators, since the EM dimension is $1\times 10=10$, while the largest MSTG level has dimension $(1+1)(4+1)=10$. The results are shown in Figure \ref{fig:conv_low_dim}.

\begin{figure}[htbp]
    \centering
    \includegraphics[width=0.7\textwidth]{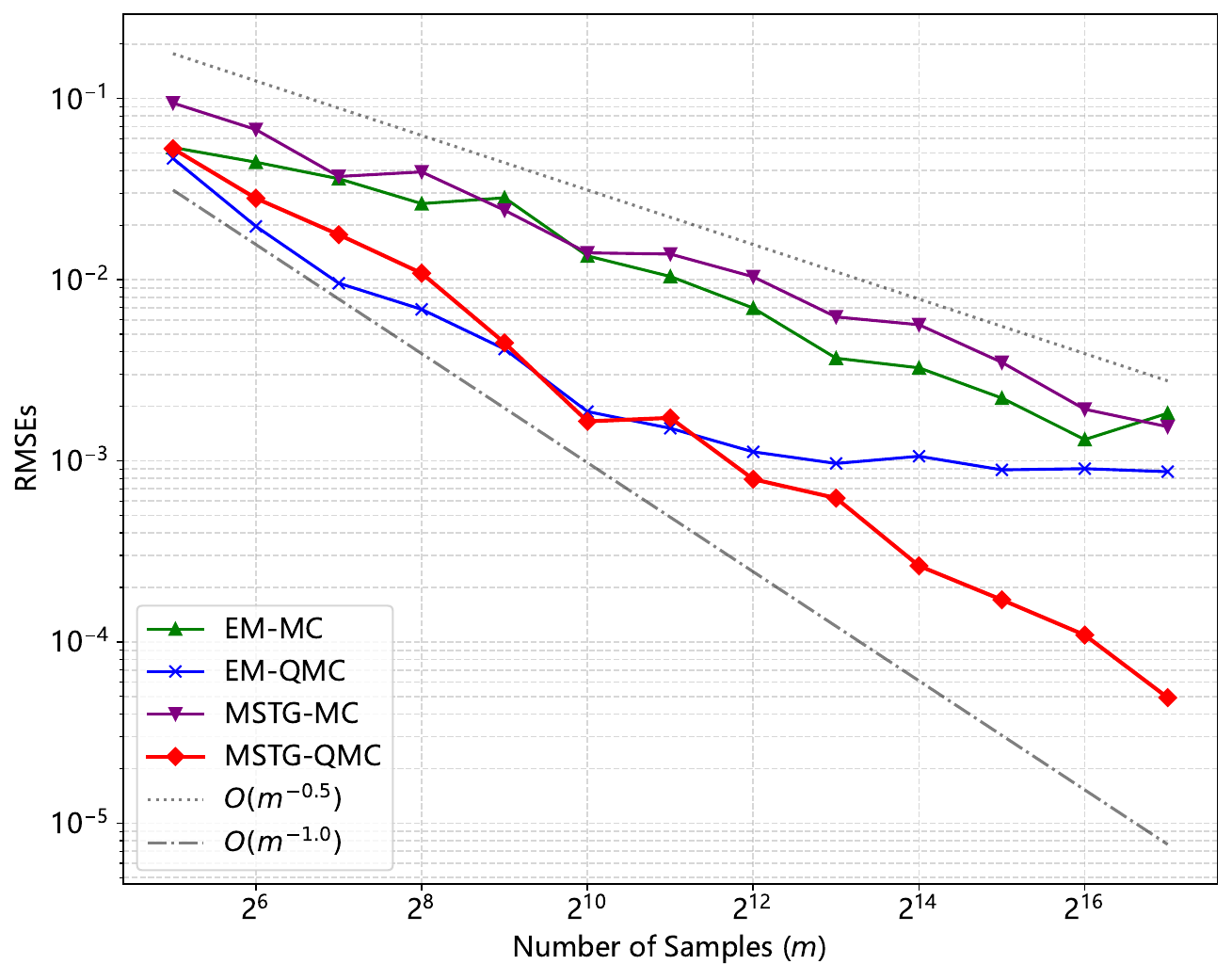} 
    \caption{One-dimensional example: RMSE versus sample size for $N_{\mathrm{EM}}=10$ and $N_{\mathrm{MSTG}}=4$, both with actual integration dimension $10$.}
    \label{fig:conv_low_dim}
\end{figure}

Figure \ref{fig:conv_low_dim} shows that, for both EM and MSTG, replacing MC by scrambled Sobol' sampling substantially improves the convergence rate. The MC curves are consistent with the standard $m^{-1/2}$ rate, whereas the RQMC curves are close to first order in $m$. The difference between EM-QMC and MSTG-QMC becomes more pronounced for larger sample sizes. This is consistent with the fact that, once the sampling error is reduced below the EM bias, the EM curves flatten, while the MSTG estimator continues to decrease because its truncation bias at this level is much smaller.

Second, we compare the methods at a comparable bias level. We increase the number of EM time steps to $N_{\mathrm{EM}}=64$, while the MSTG estimator uses only $N_{\mathrm{MSTG}}=5$. The latter choice is sufficient because of the super-exponential truncation bound in Theorem \ref{them: mstg converge}. The results are shown in Figure \ref{fig:conv_high_dim}.

\begin{figure}[htbp]
    \centering
    \includegraphics[width=0.7\textwidth]{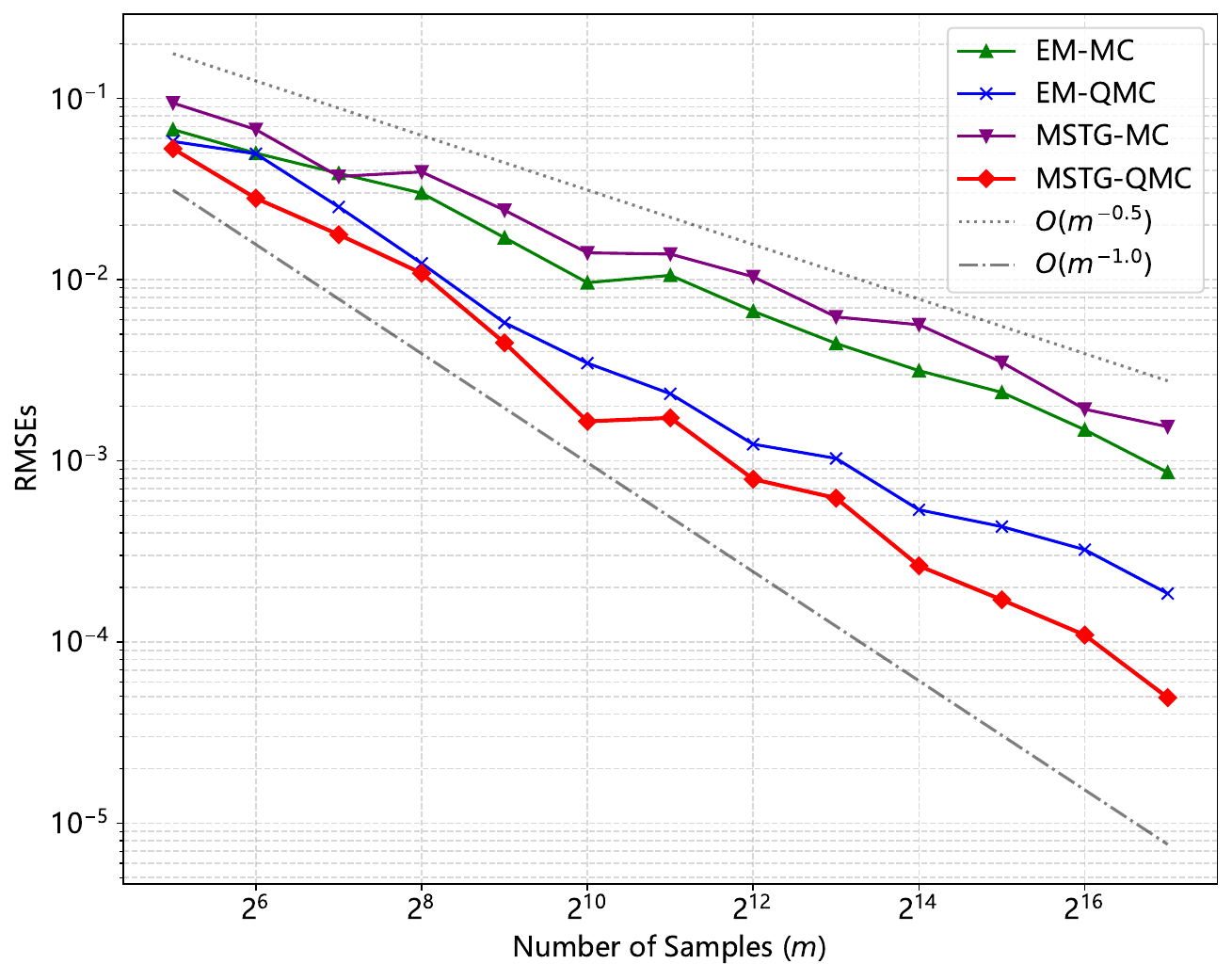} 
    \caption{One-dimensional example: RMSE versus sample size for $N_{\mathrm{MSTG}}=5$ and $N_{\mathrm{EM}}=64$.}
    \label{fig:conv_high_dim}
\end{figure}

The second comparison isolates the dimension-reduction effect. To reach a similar bias level, the EM estimator uses a 64-dimensional Gaussian input in this one-dimensional SDE, whereas MSTG-QMC only integrates over levels up to dimension $2(N_{\mathrm{MSTG}}+1)=12$. The lower-dimensional structure of the MSTG estimator leads to smaller errors and a more stable RQMC convergence curve. This behavior is precisely what the theory predicts: the sampling rate remains close to first order, while the truncation error is already negligible at a small maximal level.

\subsection{Multidimensional SDEs}
\label{subsec:numerical_multidim}

We next consider a family of $d$-dimensional nonlinear correlated basket-type SDEs, following the example in \cite{henry-labordere2017Unbiased}.  Let $W=(W^1,\ldots,W^d)^\top$ be a $d$-dimensional Brownian motion, and let $\sigma_0$ be the lower triangular Cholesky factor of the equicorrelation matrix
\[
C_d=(c_{ij})_{i,j=1}^d,\qquad
c_{ii}=1,\quad c_{ij}=\frac12\quad (i\ne j).
\]
The state process $X=(X^1,\ldots,X^d)^\top$ is defined by
\begin{equation}\label{eq: numerical sde multidim}
\dd X_t = \mu(X_t)\dd t + \sigma_0 \dd W_t,
\qquad X_0=0\in\mathbb{R}^d,
\end{equation}
where $\mu(x)=(\mu_i(x))_{i=1}^d$ is given by
\begin{equation}\label{eq: numerical drift multidim}
\mu_i(x)
=
0.1\left(\sqrt{M_b \wedge \left(\frac{3}{4}\exp(x_i)
+\frac{1}{4}\overline{\exp}_d(x)\right)}-1\right)
-\frac{1}{8},
\qquad
\overline{\exp}_d(x):=\frac1d\sum_{j=1}^d e^{x_j}.
\end{equation}
The quantity of interest is the basket payoff
\begin{equation}\label{eq: numerical payoff multidim}
V_0^{(d)}
:=
\E\left[
\left(
\frac1d\sum_{i=1}^d e^{M_b\wedge X_T^i}-K
\right)_+
\right].
\end{equation}
Throughout this multidimensional experiment we take $M_b=100$, $K=1$, $T=1$, and $\beta=1$.

\paragraph{The case $d=4$.}
We first set $d=4$, which gives the multidimensional benchmark in \cite{henry-labordere2017Unbiased}.  The reference value is obtained from a high-accuracy MLMC computation; we use $V_0^{(4)}=0.7030524641$.

The results for the four-dimensional problem are reported in Figures \ref{fig:conv_low_dim_4d} and \ref{fig:conv_high_dim_4d}. The same qualitative conclusions remain valid in this higher-dimensional setting. In the first comparison, we choose $N_{\mathrm{EM}}=10$ and $N_{\mathrm{MSTG}}=7$, so that the actual integration dimensions are identical: $4\times 10=(4+1)(7+1)=40$. With this dimension-matched setting, RQMC sampling improves both EM and MSTG over their MC counterparts, and MSTG-QMC continues to decrease after the EM-QMC curve is affected by discretization bias. When the EM step number is increased to $N_{\mathrm{EM}}=64$ while MSTG uses $N_{\mathrm{MSTG}}=5$, the contrast becomes stronger: the EM-QMC estimator is applied in dimension $4\times64=256$, whereas the largest MSTG level uses dimension $(4+1)(5+1)=30$. The numerical results show that this reduction in actual input dimension is important for maintaining the effectiveness of the scrambled Sobol' rule.

\begin{figure}[htbp]
    \centering
    \includegraphics[width=0.7\textwidth]{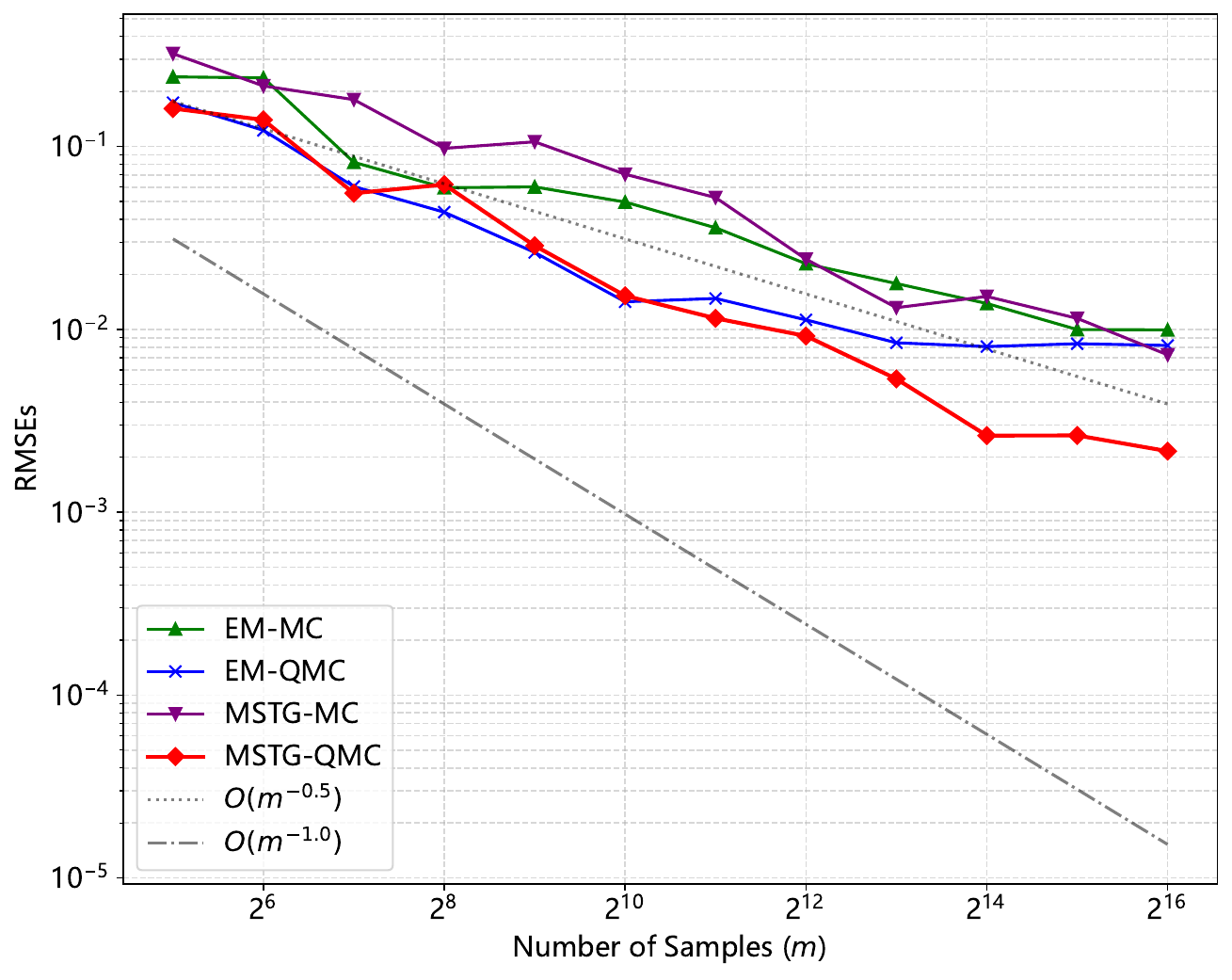}
    \caption{Case $d=4$: RMSE versus sample size for $N_{\mathrm{EM}}=10$ and $N_{\mathrm{MSTG}}=7$, both with actual integration dimension $40$.}
    \label{fig:conv_low_dim_4d}
\end{figure}

\begin{figure}[htbp]
    \centering
    \includegraphics[width=0.7\textwidth]{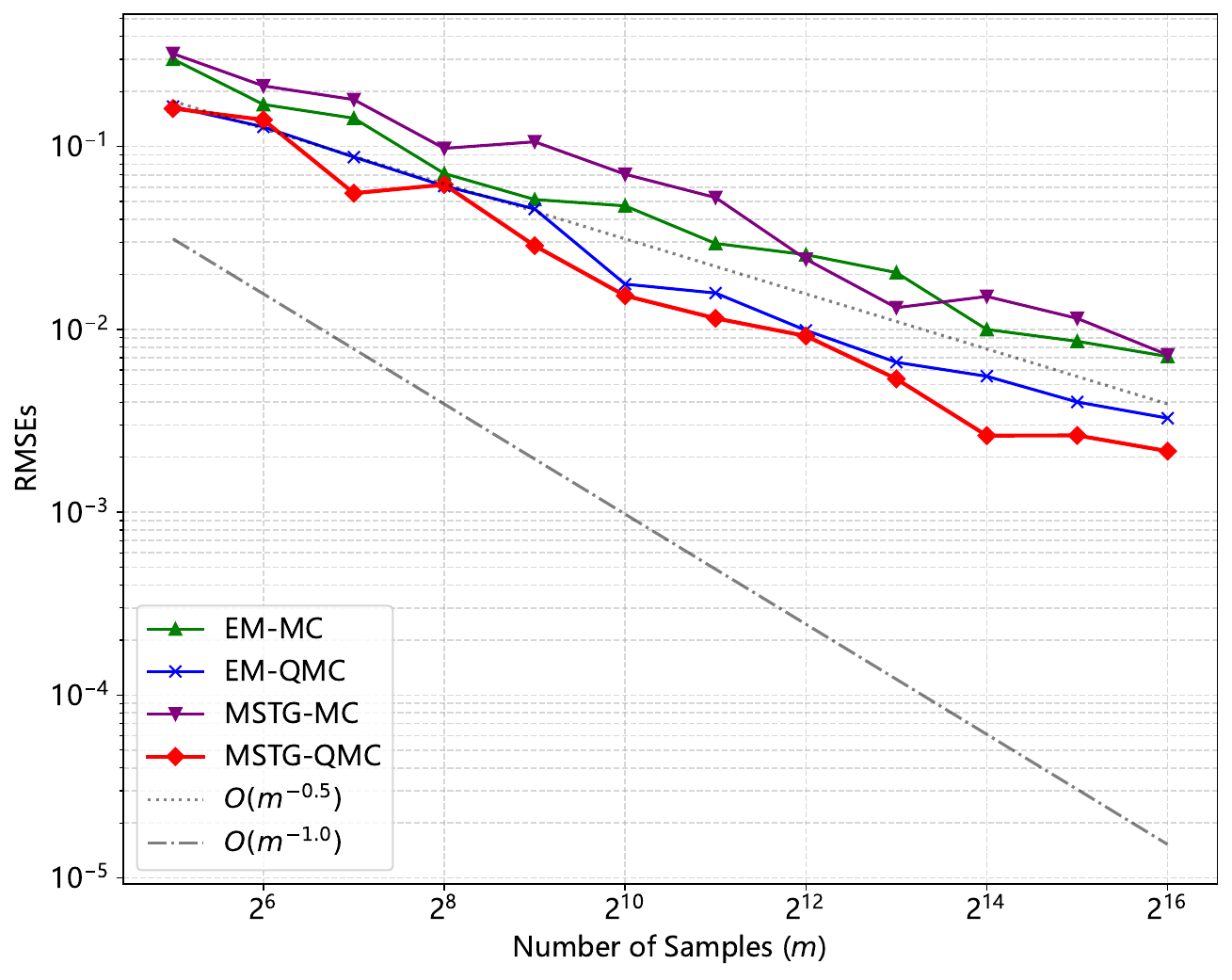}
    \caption{Case $d=4$: RMSE versus sample size for $N_{\mathrm{MSTG}}=5$ and $N_{\mathrm{EM}}=64$.}
    \label{fig:conv_high_dim_4d}
\end{figure}

\paragraph{The case $d=10$.}
We then use the same formulation \eqref{eq: numerical sde multidim}--\eqref{eq: numerical payoff multidim} with $d=10$.  The reference value, computed by a high-sample MLMC calculation, is $V_0^{(10)}=0.6788085524$.  For the MSTG estimator in this experiment, level sample sizes are allocated proportionally to the Poisson probabilities $p_n=e^{-1}/n!$.  In the RQMC version, a single Owen-scrambled Sobol' point set is generated in the total integration dimension of the largest retained MSTG level and then split across levels; each level uses the corresponding coordinate prefix.  This implementation preserves the Poisson level weighting and improves the empirical stability of the RQMC estimator.

The first comparison fixes the total integration dimension.  For EM, the total integration dimension is $dN_{\mathrm{EM}}$; for MSTG, the total integration dimension of the largest retained level is $(d+1)(N_{\mathrm{MSTG}}+1)$.  Thus choosing $N_{\mathrm{EM}}=11$ and $N_{\mathrm{MSTG}}=9$ gives the common total integration dimension $110$.  Table \ref{tab:dim10_same_dimension} shows that, under the same dimension budget, MSTG-QMC reduces the RMSE by more than a factor of four compared with EM-QMC, and requires less computational time.

\begin{table}[htbp]
\centering
\caption{Case $d=10$: same total integration dimension.}
\label{tab:dim10_same_dimension}
\begin{tabular}{lrrrrr}
\hline
Method & $m$ & $N$ & Int. dim. & RMSE & Time (s)\\
\hline
EM-MC & 65536 & 11 & 110 & $8.314{\times}10^{-3}$ & 4.66\\
EM-QMC & 65536 & 11 & 110 & $6.053{\times}10^{-3}$ & 8.91\\
MSTG-MC & 65536 & 9 & 110 & $5.849{\times}10^{-3}$ & 2.79\\
MSTG-QMC & 65536 & 9 & 110 & $\mathbf{1.426}{\times}\mathbf{10^{-3}}$ & \textbf{2.91}\\
\hline
\end{tabular}
\end{table}

The second comparison uses a refined EM discretization, $N_{\mathrm{EM}}=64$, while keeping the MSTG truncation level at $N_{\mathrm{MSTG}}=5$.  This reflects the different bias mechanisms: EM requires many deterministic time steps to reduce its algebraic discretization error, whereas the MSTG truncation error is already small at a low level.  With the same number of samples, Table \ref{tab:dim10_equal_samples} shows that MSTG-QMC attains a slightly smaller RMSE than EM-QMC while using less than one twentieth of the computational time.

\begin{table}[htbp]
\centering
\caption{Case $d=10$: equal sample size with comparable bias settings.}
\label{tab:dim10_equal_samples}
\begin{tabular}{lrrrrr}
\hline
Method & $m$ & $N$ & Int. dim. & RMSE & Time (s)\\
\hline
EM-MC & 65536 & 64 & 640 & $5.193{\times}10^{-3}$ & 26.56\\
EM-QMC & 65536 & 64 & 640 & $1.837{\times}10^{-3}$ & 50.67\\
MSTG-MC & 65536 & 5 & 66 & $7.950{\times}10^{-3}$ & 2.70\\
MSTG-QMC & 65536 & 5 & 66 & $\mathbf{1.685\times10^{-3}}$ & \textbf{2.57}\\
\hline
\end{tabular}
\end{table}

Finally, Table \ref{tab:dim10_same_time} compares the methods at comparable computational times.  The EM rows are taken from Table \ref{tab:dim10_equal_samples}.  Since MSTG is much cheaper per sample at the same bias level, the saved computational budget can be converted into a larger sample size.  In particular, MSTG-QMC with $2^{20}$ samples is still faster than EM-QMC with $2^{16}$ samples and reduces the RMSE by a factor of approximately $6.7$.

\begin{table}[htbp]
\centering
\caption{Case $d=10$: comparable computational times.}
\label{tab:dim10_same_time}
\begin{tabular}{lrrrrr}
\hline
Method & $m$ & $N$ & Int. dim. & RMSE & Time (s)\\
\hline
EM-MC & 131072 & 64 & 640 & $3.605{\times}10^{-3}$ & 49.56\\
MSTG-MC & 1310720 & 5 & 66 & $1.460{\times}10^{-3}$ & 49.83\\
EM-QMC & 65536 & 64 & 640 & $1.837{\times}10^{-3}$ & 50.67\\
MSTG-QMC & 1048576 & 5 & 66 & $\mathbf{2.758}{\times}\mathbf{10^{-4}}$ & \textbf{42.56}\\
\hline
\end{tabular}
\end{table}

Overall, the experiments support the theoretical message of the paper. RQMC improves the sampling error for both fixed-grid and stochastic-grid estimators, but the benefit is most pronounced when the estimator keeps the actual integration dimension small. The MSTG construction achieves this by replacing the slowly decaying EM bias with a super-exponentially small truncation error, so that high accuracy can be obtained without forcing RQMC into a very high-dimensional fixed-grid representation.

\section{Summary}
\label{sec:chap_sde_summary}

This paper studied RQMC methods for the numerical simulation of SDEs. For the Euler-Maruyama scheme, we proved that RQMC can improve the sampling error from the MC order $\mathcal{O}(m^{-1/2})$ to the nearly first-order rate $\mathcal{O}(m^{-1+\eps})$, where $\eps>0$ can be arbitrarily small, under a polynomial growth condition on the induced integrand.

The main message is that dimension reduction is essential for turning this high-order sampling behavior into practical efficiency. The proposed MSTG estimator reduces the actual integration dimension required by the SDE simulation: instead of forcing RQMC to work with high-dimensional fixed-grid Brownian inputs, it represents the problem through low-dimensional stochastic-grid levels and controls the remaining bias by a rapidly decaying truncation error. As a result, high accuracy can be achieved with far fewer input coordinates, so that QMC is applied to genuinely lower-dimensional integration problems. The numerical experiments in both one and multiple dimensions confirm that this reduction of the actual integration dimension is the key reason for the efficiency gain of the MSTG-QMC method.

\appendix
\section{Proof of Main Results}\label{sec: sde lemma proof}

\subsection{Proof of Theorem \ref{thm: partial deri growth}}\label{sec proof thm: partial deri growth}

\begin{proof}
By Assumption \ref{assum: qmc euler coef}, it holds that $ \widetilde X_k \in \mathcal{S}^N(\R^N),\ 0\le k\le N$. It remains to prove that for any $\bm u \subseteq 1{:}N$, the mixed partial derivative satisfies $ \partial^{\bm u}\widetilde X_k \in \mathcal{C}_p(\R^N)$. We proceed by induction on the order $|\boldsymbol{u}|$.

\textbf{Step 1: The base case $|\boldsymbol{u}|=0$ (growth of $\widetilde X_k$ itself).}

Recall the EM iteration:
\begin{equation}
    \widetilde X_k = \widetilde X_{k-1} + b(t_{k-1}, \widetilde X_{k-1})\Delta t + \sigma(t_{k-1}, \widetilde X_{k-1})\sqrt{\Delta t} z_k.
\end{equation}
Applying the triangle inequality and the linear growth conditions on the coefficients $b$ and $\sigma$, we have:
\begin{align}
    |\widetilde X_k| &\le |\widetilde X_{k-1}| + |b(t_{k-1}, \widetilde X_{k-1})|\Delta t + |\sigma(t_{k-1}, \widetilde X_{k-1})|\sqrt{\Delta t} |z_k| \notag \\
    &\le |\widetilde X_{k-1}| + C(1+|\widetilde X_{k-1}|)\Delta t + C(1+|\widetilde X_{k-1}|)\sqrt{\Delta t} |z_k|.
\end{align}
By adding 1 to both sides, we obtain the following recursive relation:
\begin{equation}
    (1+|\widetilde X_k|) \le (1+|\widetilde X_{k-1}|) \left( 1 + C\Delta t + C\sqrt{\Delta t}|z_k| \right).
\end{equation}
Iterating this inequality back to $\widetilde X_0$ yields:
\begin{equation}
\begin{aligned}
    (1+|\widetilde X_k|) &\le (1+|x_0|) \prod_{j=1}^k \left( 1 + C\Delta t + C\sqrt{\Delta t}|z_j| \right)\\
    &\le (|x_0|+ 1)\left( 1+ C\Delta t + C\sqrt{\Delta t}\frac{\sum_{j=1}^k |z_j|}{k}\right)^k\\
    &\le (|x_0| + 1) \left( 1+ C\Delta t + C\sqrt{\Delta t}\frac{|\bm z|}{\sqrt{k}}\right)^k.
    \end{aligned}
\end{equation}
Since the right-hand side is a polynomial in $|\bm z|$, it follows that $\widetilde X_k\in \mathcal{C}_p(\R^N)$.

\textbf{Step 2: The case $|\boldsymbol{u}|=1$.}

Suppose $\boldsymbol{u} = \{j\}$ for some $1 \le j \le k$. Let $J_{k, j} := \frac{\partial \widetilde X_k}{\partial \widetilde X_j}$ denote the state Jacobian. From the chain rule and the recursive structure of the EM scheme, we have:
\begin{equation}
    \frac{\partial \widetilde X_k}{\partial \widetilde X_{k-1}} = 1 + \partial_x b(t_{k-1}, \widetilde X_{k-1})\Delta t + \partial_x \sigma(t_{k-1}, \widetilde X_{k-1})\sqrt{\Delta t} z_k.
\end{equation}
For $k > j$, the derivative between state variables can be explicitly represented as:
\begin{equation} \label{eq:tangent_process}
    J_{k, j}:= \frac{\partial \widetilde X_k}{\partial \widetilde X_j}  = \prod_{i=j+1}^k \left( 1 + \partial_x b_{i-1} \Delta t + \partial_x \sigma_{i-1} \sqrt{\Delta t} z_i \right),
\end{equation}
where we use the shorthand $b_{i-1} = b(t_{i-1}, \widetilde X_{i-1})$ and similarly for $\sigma_{i-1}$, with the convention $J_{k, k} = 1$. We now examine the first-order partial derivative $\frac{\partial \widetilde X_k}{\partial z_j}$. By the chain rule:
\begin{equation}
    \frac{\partial \widetilde X_k}{\partial z_j} = \frac{\partial \widetilde X_k}{\partial \widetilde X_{j}} \frac{\partial \widetilde X_{j}}{\partial z_j} = J_{k,j}\sqrt{\Delta t}\sigma(t_{j-1},\widetilde X_{j-1}).
\end{equation}
The explicit expression for the first-order derivative is thus:
\begin{equation}\label{eq: J_kj}
    \frac{\partial \widetilde X_k}{\partial z_j} = \sigma(t_{j-1}, \widetilde X_{j-1})\sqrt{\Delta t} \prod_{m=j+1}^k \left( 1 + \partial_x b(t_{m-1},\widetilde X_{m-1})\Delta t + \partial_x\sigma(t_{m-1},\widetilde X_{m-1})\sqrt{\Delta t} z_m \right).
\end{equation}
Under Assumption \ref{assum: qmc euler coef}, we have $\partial_x b, \partial_x\sigma \in \mathcal{C}_p(\R)$ and $\sigma$ satisfies the linear growth condition. Combined with the result from Step 1 that $\widetilde X_m\in \mathcal{C}_p(\R^N)$, it follows that each term in \eqref{eq: J_kj} belongs to the function class $\mathcal{C}_p(\R^N)$. Thus, $ \frac{\partial \widetilde X_k}{\partial z_j} \in \mathcal{C}_p(\R^N)$.

\textbf{Step 3: Inductive Step for general order.}

Assume that for all $\boldsymbol{v} \subseteq 1{:}N$ such that $|\boldsymbol{v}| \le n$, and for all $1 \le k \le N$, we have $\partial^{\boldsymbol{v}} \widetilde X_k \in \mathcal{C}_p(\R^N)$. Now consider $\boldsymbol{u} = \boldsymbol{v} \cup \{i\}$ with $i > \max_{j \in \bm v} j$. By the commutativity of partial derivatives:
\begin{equation}
    \partial^{\boldsymbol{u}} \widetilde X_k = \partial^{\boldsymbol{v}} \partial_{z_i} \widetilde X_k = \partial^{\boldsymbol{v}} \left( \frac{\partial \widetilde X_k}{\partial \widetilde X_i} \frac{\partial \widetilde X_i}{\partial z_i} \right).
\end{equation}
Substituting the explicit expressions yields:
\begin{equation}\label{eq: partial x decomp}
        \begin{aligned}
            \partial^{\boldsymbol{u}} \widetilde X_k &= \partial^{\boldsymbol{v}} \left( J_{k,i} \cdot \sqrt{\Delta t} \sigma(t_{i-1}, \widetilde X_{i-1}) \right) \\
            &= \sqrt{\Delta t} \sum_{\boldsymbol{w} \subseteq \boldsymbol{v}} (\partial^{\boldsymbol{w}} J_{k,i}) (\partial^{\boldsymbol{v} \setminus \boldsymbol{w}} \sigma(t_{i-1}, \widetilde X_{i-1}))
        \end{aligned}
    \end{equation}
Since the function class $\mathcal{C}_p(\R^N)$ is closed under finite addition and multiplication, to prove that $\partial^{\boldsymbol{u}} \widetilde X_k \in \mathcal{C}_p(\R^N)$, it suffices to show that for any subset $\boldsymbol{w} \subseteq \boldsymbol{v}$, the two factors $\partial^{\boldsymbol{w}} J_{k,i}$ and $\partial^{\boldsymbol{v} \setminus \boldsymbol{w}} \sigma(t_{i-1}, \widetilde X_{i-1})$ in the summation both belong to $\mathcal{C}_p(\R^N)$.

    First, for the factor $\partial^{\boldsymbol{w}} J_{k,i}$, we have
    \begin{equation}
        \partial^{\boldsymbol{w}} J_{k,i} = \partial^{\boldsymbol{w}} \prod_{j=i+1}^k (1 + \partial_x b\Delta t + \partial_x \sigma\sqrt{\Delta t}z_j).
    \end{equation}
    Using the product rule for multi-indices, the right side of the above equation equals
    \begin{equation}
        \sum_{\boldsymbol{w}_{i+1} + \dots + \boldsymbol{w}_k = \boldsymbol{w}} \prod_{j=i+1}^k \partial^{\boldsymbol{w}_j} (1 + \partial_x b\Delta t + \partial_x \sigma\sqrt{\Delta t}z_j).
    \end{equation}
    Note that for each term $\partial^{\boldsymbol{w}_j} (1 + \partial_x b\Delta t + \partial_x \sigma\sqrt{\Delta t}z_j)$, according to Fa\`a di Bruno's formula, we obtain
    \begin{equation}
        \partial^{\boldsymbol{w}_j} (\partial_x b\Delta t + \partial_x \sigma\sqrt{\Delta t}z_j) = \sum_{\lambda \in \pi(\boldsymbol{w}_j)} \left(\Delta t \partial_x^{1+|\lambda|} b+ \sqrt{\Delta t} z_j \partial_x^{1+|\lambda|}\sigma\right)\prod_{\eta \in \lambda} \partial^\eta \widetilde X_{j-1},
    \end{equation}
    where $ \pi(\boldsymbol w_j)$ is the set of all partitions of $ \boldsymbol{w}_j$, and $\lambda$ is one such partition, satisfying $ \sum_{\eta\in \lambda} \eta = \boldsymbol{w}_j$. Since $\eta \subseteq \boldsymbol{w}_j \subseteq \boldsymbol{w} \subseteq \boldsymbol{v}$, by the induction hypothesis, we have $\partial^\eta \widetilde X_j \in \mathcal{C}_p(\R^N)$. Moreover, since all derivatives of $b$ and $\sigma$ belong to $ \mathcal{C}_p(\R^N)$, and $\widetilde X_j \in \mathcal{C}_p(\R^N)$, we have $\partial^{\boldsymbol{w}} J_{k,i} \in \mathcal{C}_p(\R^N)$.

    Second, for the factor $\partial^{\boldsymbol{v} \setminus \boldsymbol{w}} \sigma(t_{i-1}, \widetilde X_{i-1})$, using Fa\`a di Bruno's formula again, we have
    \begin{equation}
        \partial^{\boldsymbol{v} \setminus \boldsymbol{w}} \sigma(t_{i-1}, \widetilde X_{i-1}) = \sum_{\lambda \in \pi(\boldsymbol{v} \setminus \boldsymbol{w})} \partial_x^{|\lambda|}\sigma \prod_{\eta \in \lambda} \partial^\eta \widetilde X_{i-1}.
    \end{equation}
    Similarly, by the induction hypothesis, we have $\partial^\eta \widetilde X_{i-1}\in \mathcal{C}_p(\R^N)$, and $\partial_x^{|\lambda|}\sigma \in \mathcal{C}_p(\R^N)$. Therefore, this factor also belongs to the function class $ \mathcal{C}_p(\R^N)$.

    In summary, for any $\boldsymbol{w} \subseteq \boldsymbol{v}$, the factors in the summation on the right-hand side of equation \eqref{eq: partial x decomp} all belong to $ \mathcal{C}_p(\R^N)$, hence $\partial^{\boldsymbol{u}} \widetilde X_k \in \mathcal{C}_p(\R^N)$.
    By mathematical induction, the theorem is proved.
\end{proof}

\subsection{Proof of Lemma \ref{lemma: mstg truncation error}}\label{sec proof lemma: mstg truncation error}

\begin{proof}
    First, for an arbitrary $n$, we estimate the upper bound of $|\mathbb{E}[\widehat{\psi} \mid N_T = n]|$. From the boundedness of the drift coefficient $b$, it follows that:
    \begin{equation}
    \begin{aligned}
        \E\left[ |\widehat X_{T_i} - \widehat X_{T_{i-1}}| \mid T_1, \dots, T_i \right] &\le \E\left[ | b(T_{i-1}, \widehat X_{T_{i-1}}) \Delta T_i + \sigma_0 \Delta W_{T_i} | \mid T_1, \dots, T_i \right] \\
        &\le C |\Delta T_i|^{1/2}.
    \end{aligned}
    \end{equation}
    Recall that the drift $b$ is uniformly $1/2$-H\"older continuous in $t$ and Lipschitz continuous in $x$, and the terminal function $g$ satisfies the global Lipschitz condition. For $n \ge 1$, we have:
    \begin{equation}\label{eq: upper bound of psi_n}
        \begin{aligned}
            |\E[\widehat{\psi}\mid N_T = n]| &=\E\left[ e^{\beta T}\left[ g(\widehat{X}_T) - g(\widehat{X}_{T_{n}})\right] \prod_{i=1}^{n} \mathcal{W}_i\Bigg|N_T = n\right]\\
            &\le e^{\beta T}\beta^{-n} \E\left[ |\Delta T_{n+1}|^{1/2} \prod_{i=1}^n \frac{|\Delta T_i|^{1/2}|\Delta T_{i+1}|^{1/2}}{|\Delta T_{i+1}|}\Bigg| N_T = n\right]\\
            &\le Ce^{\beta T}\beta^{-n}\E\left[ |\Delta T_1|^{1/2}\big| N_T = n\right] \\
            &\le C T^{1/2}e^{\beta T} \beta^{-n}.
        \end{aligned}
    \end{equation}
    Substituting this bound back into the summation for the truncation error, we obtain:
    \begin{equation}
    \begin{aligned}
        \sum_{n = N+1}^{\infty}p_n \left| \E[\widehat \psi\mid N_T = n]\right| &\le \sum_{n=N+1}^{\infty} \frac{(\beta T)^n}{n!} e^{-\beta T} CT^{1/2}e^{\beta T} \beta^{-n}\le \sum_{n = N+1}^\infty \frac{T^n}{n!} CT^{1/2}.
        \end{aligned}
    \end{equation}
    It remains to estimate the upper bound of the summation on the right-hand side. Let $Y$ be a Poisson random variable with intensity (parameter) $T$. We observe that:
    \begin{equation}
        \begin{aligned}
            \sum_{n = N+1}^\infty \frac{T^n}{n!} &= e^{T}\sum_{n = N+1}^\infty \frac{T^n}{n!} e^{-T} = e^{T} \p\left( Y>N\right)\\
            &\le e^{T}\frac{\E[e^{tY}]}{e^{tN}}  = e^{T}\frac{e^{T(e^t - 1)}}{e^{tN}}\\
            &\le e^{T}e^{T(N/T -1) - N\log(N/T)}\\
            &\le (N/(eT))^{-N},
        \end{aligned}
    \end{equation}
    where we utilized the Chernoff bound (a specific case of Chebyshev's inequality) in the first inequality, and set $t = \log(N/T)$ in the second inequality. This tail estimate directly yields the desired result.
\end{proof}

\subsection{Proof of Lemma \ref{lemma: deri delta t growth}}\label{sec proof lemma: deri delta t growth}

\begin{proof}
We first consider \textbf{Case 1}: $i \notin \boldsymbol{u}^{\tau}$. By the chain rule, there exists a constant $C$ such that:
\begin{equation}\label{eq: partial delta t}
    \begin{aligned}
        \left|\partial^{\boldsymbol{u}^{\tau}}_{\boldsymbol{z}^{\tau}} \Delta T_i\right| &= T\left|\partial^{\boldsymbol{u}^{\tau}}_{\boldsymbol{z}^{\tau}} \frac{F_{\tau}^{-1}\circ \Phi(z_i^{\tau})}{\sum_{j=1}^{n+1} F_{\tau}^{-1}\circ \Phi(z_j^{\tau})}\right|\\
        & = C\left( \sum_{j=1}^{n+1} F_{\tau}^{-1}\circ \Phi(z_j^{\tau})\right)^{-(|\boldsymbol{u}^{\tau}| +1)}F_{\tau}^{-1}\circ \Phi(z_i^{\tau})\prod_{j\in \boldsymbol{u}^{\tau}} \frac{\varphi(z_j^{\tau})}{1-\Phi(z_j^{\tau})}\\
        &\le C\Delta T_i \prod_{j\in\boldsymbol{u}^{\tau}}\left( \sum_{j=1}^{n+1} F_{\tau}^{-1}\circ \Phi(z_j^{\tau})\right)^{-1} \frac{\varphi(z_j^{\tau})}{1-\Phi(z_j^{\tau})}.
    \end{aligned}
\end{equation}
It suffices to bound each factor in the product on the right-hand side. Recalling that $F_{\tau}^{-1}(x) = -\log(1-x)$ and that for any $x\in (0,1)$, $x \le - \log(1-x) \le \frac{x}{1-x}$, we have:
\begin{equation}\label{eq: partial delta t prod mid term}
    \left( \sum_{j=1}^{n+1} F_{\tau}^{-1}\circ \Phi(z_j^{\tau})\right)^{-1} \frac{\varphi(z_j^{\tau})}{1-\Phi(z_j^{\tau})} \le \frac{1}{F_{\tau}^{-1}\circ\Phi(z_j^{\tau})} \frac{\varphi(z_j^{\tau})}{1-\Phi(z_j^{\tau})} \le \frac{1}{\Phi(z_j^{\tau})}\frac{\varphi(z_j^{\tau})}{1-\Phi(z_j^{\tau})} .
\end{equation}
We analyze the upper bound in \eqref{eq: partial delta t prod mid term} by considering the limits $z_j^{\tau} \to \pm \infty$.

When $z_j^{\tau} \to -\infty$, assume without loss of generality that $z_j^{\tau} < -2$. In this regime, $\Phi(z_j^{\tau}) \to 0$, so $\frac{1}{1-\Phi(z_j^{\tau})} \to 1$. We then only need to bound the ratio $\frac{\varphi(z_j^{\tau})}{\Phi(z_j^{\tau})}$. By the Mills ratio, we have:
\begin{equation}
    \frac{\varphi(z_j^{\tau})}{\Phi(z_j^{\tau})} \le |z_j^{\tau}| + \frac{1}{|z_j^{\tau}|} \le |z_j^{\tau}| + \frac{1}{2} \le 2|z_j^{\tau}|.  
\end{equation}
When $z_j^{\tau} \to +\infty$, assume without loss of generality that $z_j^{\tau} > 2$. Here $\Phi(z_j^{\tau}) \to 1$, so $\frac{1}{\Phi(z_j^{\tau})} \to 1$. We then only need to bound $\frac{\varphi(z_j^{\tau})}{1-\Phi(z_j^{\tau})}$. Again, by Mills ratio, we have:
\begin{equation}
    \frac{\varphi(z_j^{\tau})}{1-\Phi(z_j^{\tau})} \le z_j^{\tau} + \frac{1}{z_j^{\tau}} \le z_j^{\tau} + \frac{1}{2} \le 2z_j^{\tau}.  
\end{equation}
Combining these scenarios with \eqref{eq: partial delta t} and \eqref{eq: partial delta t prod mid term}, we conclude that there exists a constant $C$ such that:
\begin{equation}
    \left| \partial^{\boldsymbol u^{\tau}}_{\boldsymbol z^{\tau}}\Delta T_i\right| \le C\Delta T_i \prod_{j \in \boldsymbol{u}^\tau}|z_j^{\tau}|.
\end{equation}

For \textbf{Case 2}: $i \in \boldsymbol{u}^{\tau}$, we first establish the following upper bound:
\begin{equation}
    \begin{aligned}
        \left|\partial^{\boldsymbol{u}^{\tau}}_{\boldsymbol{z}^{\tau}} \Delta T_i\right| &= T\left|\partial^{\boldsymbol{u}^{\tau}\setminus\{i\}}_{\boldsymbol{z}^{\tau}}\partial_{z_i^{\tau}} \frac{F_{\tau}^{-1}\circ \Phi(z_i^{\tau})}{\sum_{j=1}^{n+1} F_{\tau}^{-1}\circ \Phi(z_j^{\tau})}\right|\\
        & = T\left| \partial^{\boldsymbol{u}^{\tau}\setminus\{i\}}_{\boldsymbol{z}^{\tau}} \frac{\sum_{j\ne i} F_{\tau}^{-1}\circ \Phi(z_j^{\tau})}{\left(\sum_{j=1}^{n+1} F_{\tau}^{-1}\circ \Phi(z_j^{\tau})\right)^2}\frac{\varphi(z_i^{\tau})}{1 - \Phi(z_i^{\tau})}\right|\\
        & = T\left|\frac{\varphi(z_i^{\tau})}{1 - \Phi(z_i^{\tau})}\partial^{\boldsymbol{u}^{\tau}\setminus\{i\}}_{\boldsymbol{z}^{\tau}} \left( \frac{1}{\sum_{j=1}^{n+1}F_{\tau}^{-1}\circ \Phi(z_j^{\tau})} - \frac{F_{\tau}^{-1}\circ \Phi(z_i^{\tau})}{\left( \sum_{j=1}^{n+1}F_{\tau}^{-1}\circ \Phi(z_j^{\tau})\right)^2}\right)\right|\\
        &\le T\frac{\varphi(z_i^{\tau})}{1 - \Phi(z_i^{\tau})} \frac{1}{F_{\tau}^{-1}\circ \Phi(z_i^{\tau})} \left|\partial^{\boldsymbol{u}^{\tau}\setminus\{i\}}_{\boldsymbol{z}^{\tau}} \Delta T_i \right| \\
        &\qquad+ T\frac{\varphi(z_i^{\tau})}{1 - \Phi(z_i^{\tau})} \left| \partial^{\boldsymbol{u}^{\tau}\setminus\{i\}}_{\boldsymbol{z}^{\tau}} \frac{F_{\tau}^{-1}\circ \Phi(z_i^{\tau})}{\left( \sum_{j=1}^{n+1}F_{\tau}^{-1}\circ \Phi(z_j^{\tau})\right)^2}\right|.
    \end{aligned}
\end{equation}
Following the analysis used in Case 1, the first term on the right-hand side is bounded by $C\Delta T_i \prod_{j\in\boldsymbol{u}^{\tau}}|z_j^{\tau}|$. For the second term, applying the chain rule yields:
\begin{equation}\notag
\begin{aligned}
    &T\frac{\varphi(z_i^{\tau})}{1 - \Phi(z_i^{\tau})} \left| \partial^{\boldsymbol{u}^{\tau}\setminus\{i\}}_{\boldsymbol{z}^{\tau}} \frac{F_{\tau}^{-1}\circ \Phi(z_i^{\tau})}{\left( \sum_{j=1}^{n+1}F_{\tau}^{-1}\circ \Phi(z_j^{\tau})\right)^2}\right| \\
    &\le C \frac{\varphi(z_i^{\tau})}{1 - \Phi(z_i^{\tau})} \left( \sum_{j=1}^{n+1}F_{\tau}^{-1}\circ \Phi(z_j^{\tau})\right)^{-1}\Delta T_i \prod_{j\in \boldsymbol{u}^{\tau}\setminus \{i\}} \left( \sum_{j=1}^{n+1}F_{\tau}^{-1}\circ \Phi(z_j^{\tau})\right)^{-1}\frac{\varphi(z_j^{\tau})}{1-\Phi(z_j^{\tau})}\\
    &\le C \Delta T_i \prod_{j\in \bm u^{\tau}} |z_j^{\tau}|,
    \end{aligned}
\end{equation}
where the final inequality is obtained via the same methodology as Case 1. This completes the proof of the lemma.
\end{proof}

\subsection{Proof of Lemma \ref{lemma: deri x growth}}\label{sec proof lemma: deri x growth}

\begin{proof}
We establish the lemma in several steps.

\noindent\textbf{Step 1: $\widehat{X}_{T_k} \in \mathcal{C}_p(\R^{2n+2})$.} \\
Under Assumption \ref{assum: smooth drift}, the numerical discretization scheme can be expressed as:
\begin{equation} \label{eq:euler_scheme}
    \widehat{X}_{T_k} = \widehat{X}_{T_{k-1}} + b( \widehat{X}_{T_{k-1}}) \Delta T_k + \sigma_0 \sqrt{\Delta T_k} z_k^W \text{}.
\end{equation}
The drift coefficient $b$ satisfies the linear growth condition, i.e., there exists a constant $C$ such that $|b(x)| \le C(1+|x|)$. Combined with \eqref{eq:euler_scheme}, we obtain the following bound for the state variable:
\begin{equation} \label{eq:state_bound_step1}
    |\widehat{X}_{T_k}| \le |\widehat{X}_{T_{k-1}}| + C(1+|\widehat{X}_{T_{k-1}}|)\Delta T_k + |\sigma_0| \sqrt{\Delta T_k} |z_k^W| \text{}.
\end{equation}
By adding $1$ to both sides of \eqref{eq:state_bound_step1} and factoring out $(1+|\widehat{X}_{T_{k-1}}|)$, we have:
\begin{equation} \label{eq:state_bound_step2}
\begin{aligned}
    (1+|\widehat{X}_{T_k}|) &\le (1+|\widehat{X}_{T_{k-1}}|) + C(1+|\widehat{X}_{T_{k-1}}|)\Delta T_k + |\sigma_0| \sqrt{\Delta T_k} |z_k^W| \\
    &\le (1+|\widehat{X}_{T_{k-1}}|) \left( 1 + C\Delta T_k + |\sigma_0| \sqrt{\Delta T_k} |z_k^W| \right) \text{}.
\end{aligned}
\end{equation}
Iteratively expanding the inequality \eqref{eq:state_bound_step2} back to the initial value $\widehat{X}_0$, and applying the generalized H\"older inequality, we obtain:
\begin{equation} \label{eq:state_bound_final}
    \begin{aligned}
    (1+|\widehat{X}_{T_k}|) &\le (1+|\widehat{X}_0|) \prod_{j=1}^{k} \left( 1 + C\Delta T_j + |\sigma_0| \sqrt{\Delta T_j} |z_j^W| \right)\\
    &\le (1+|\widehat{X}_0|)  \left( 1 + C T/k + |\sigma_0|\sum_{j=1}^k \sqrt{\Delta T_j} |z_j^W|/k \right)^k\\
    &\le (1+|\widehat{X}_0|)  \left( 1 + C T/k + |\sigma_0| \frac{\sqrt{T}}{k} |\bm z^W| \right)^k.
    \end{aligned}
\end{equation}
The expression in \eqref{eq:state_bound_final} ensures that $\widehat{X}_{T_k} \in \mathcal{C}_p(\R^{2n+2})$.

\noindent\textbf{Step 2: $\partial_{\bm z^\tau}^{\bm u^{\tau}}\widehat{X}_{T_k} \in \mathcal{C}_p(\R^{2n+2})$.} \\
We proceed by induction on the index $k$ to prove that for any $\bm u^{\tau} \subseteq 1{:}(n+1)$ and $0 \le k \le n+1$, the partial derivative $\partial_{\bm z^\tau}^{\bm u^{\tau}}\widehat{X}_{T_k} \in \mathcal{C}_p(\R^{2n+2})$. For $k = 0$, $\partial_{\bm z^\tau}^{\bm u^{\tau}}\widehat{X}_{T_0} = 0$, thus the base case holds.

Suppose the hypothesis holds for $k-1$; i.e., $\partial_{\bm z^\tau}^{\bm u^{\tau}}\widehat{X}_{T_{k-1}} \in \mathcal{C}_p(\R^{2n+2})$ for any $\bm u^{\tau} \subseteq 1{:}(n+1)$. Considering $\partial_{\bm z^\tau}^{\bm u^{\tau}}\widehat{X}_{T_k}$, the chain rule yields:
\begin{equation} \label{eq:first_derivative}
    \partial_{\bm z^\tau}^{\bm u^{\tau}}\widehat{X}_{T_k} =  \partial_{\bm z^\tau}^{\bm u^{\tau}}\widehat{X}_{T_{k-1}} + \sum_{\bm u_1^{\tau} + \bm u_2^{\tau} = \bm u^{\tau}} \partial_{\bm z^\tau}^{\bm u^{\tau}_1}b(\widehat{X}_{T_{k-1}})\partial_{\bm z^\tau}^{\bm u^{\tau}_2}\Delta T_k + \sigma_0\partial_{\bm z^\tau}^{\bm u^{\tau}}\sqrt{\Delta T_k}z_k^W.
\end{equation}
We analyze each term on the right-hand side of \eqref{eq:first_derivative} individually. For the first term, the inductive hypothesis implies $\partial_{\bm z^\tau}^{\bm u^{\tau}}\widehat{X}_{T_{k-1}} \in \mathcal{C}_p(\R^{2n+2})$. For $\partial_{\bm z^\tau}^{\bm u^{\tau}_1} b(\widehat{X}_{T_{k-1}})$ in the second term, we apply the Fa\`{a} di Bruno formula:
\begin{equation}\label{eq: drift derivatives fa}
    \partial_{\bm z^\tau}^{\bm u^{\tau}_1}b(\widehat{X}_{T_{k-1}}) = \sum_{w\in \pi(\bm u_1^{\tau})} b^{(|w|)}(\widehat{X}_{T_{k-1}}) \prod_{\lambda \in w} \partial^{\lambda}_{\bm z^{\tau}} \widehat{X}_{T_{k-1}}.
\end{equation}
Since all derivatives of $b$ are bounded by assumption, and the inductive hypothesis ensures $\partial^{\lambda}_{\bm z^{\tau}} \widehat{X}_{T_{k-1}} \in \mathcal{C}_p(\R^{2n+2})$, it follows that $\partial_{\bm z^\tau}^{\bm u^{\tau}_1} b(\widehat{X}_{T_{k-1}}) \in \mathcal{C}_p(\R^{2n+2})$. Together with the bound for $\partial_{\bm z^\tau}^{\bm u^{\tau}_2}\Delta T_k$ from Lemma \ref{lemma: deri delta t growth}, the entire second term belongs to $\mathcal{C}_p(\R^{2n+2})$.

For the derivative of the square root term in the third term, applying the Fa\`{a} di Bruno formula and Lemma \ref{lemma: deri delta t growth} again gives:
\begin{equation} \label{eq:sqrt_derivative}
    \begin{aligned}
    \left|\partial_{\bm z^\tau}^{\bm u^{\tau}}\sqrt{\Delta T_k}\right| &= \left|\sum_{w\in \pi(\bm u^{\tau})} (-1)^{|w|-1}\frac{(2|w|-3)!!}{2^{|w|}} (\Delta T_k)^{\frac{1}{2}- |w|} \prod_{\lambda \in w} \partial^{\lambda}_{\bm z^{\tau}} \Delta T_{k}\right| \\
    &\le C \sum_{w\in \pi(\bm u^{\tau})} (\Delta T_k)^{\frac{1}{2}- |w|} (\Delta T_k)^{|w|} \prod_{\lambda \in w} \prod_{i \in \lambda} |z_i^{\tau}|\\
    & = C \sum_{w\in \pi(\bm u^{\tau})} (\Delta T_k)^{\frac{1}{2}}  \prod_{\lambda \in w}\prod_{i \in \lambda} |z_i^{\tau}|,
    \end{aligned}
\end{equation}
where we denote $ (-1)!! = 1$. Combining \eqref{eq:first_derivative}, \eqref{eq: drift derivatives fa}, and \eqref{eq:sqrt_derivative}, we conclude $\partial_{\bm z^\tau}^{\bm u^{\tau}}\widehat{X}_{T_k} \in \mathcal{C}_p(\R^{2n+2})$. By mathematical induction, the mixed partial derivative satisfies $\partial_{\bm z^\tau}^{\bm u^{\tau}}\widehat{X}_{T_k} \in \mathcal{C}_p(\R^{2n+2})$ for all $\bm u^{\tau}$ and $k$.

\noindent\textbf{Step 3: $\partial^{\bm u^{\tau}}_{\bm z^{\tau}}\partial^{\bm u^W}_{\bm z^W} \widehat{X}_{T_k} \in \mathcal{C}_p(\R^{2n+2})$.} \\
We now proceed by induction on the order of derivatives with respect to $\bm{z}^W$, denoted by $m = |\bm{u}^W|$, to prove the conclusion for all $\bm u^{\tau}, \bm u^W \subseteq 1{:}(n+1)$. From the previous steps, the base case $m=0$ is established.

Assuming the conclusion holds for all $|\bm u^W| \le m$, we consider the case for $m+1$. For any $1 \le j \le n+1$, we compute the derivative $\partial^{\bm u^{\tau}}_{\bm z^{\tau}} \partial^{\bm u^W \cup \{j\}}_{\bm z^W} \widehat{X}_{T_k}$:
\begin{equation}\label{eq: deri x}
    \begin{aligned}
        \partial^{\bm u^{\tau}}_{\bm z^{\tau}}\partial^{\bm u^W\cup\{j\}}_{\bm z^W} \widehat{X}_{T_k} &= \partial^{\bm u^{\tau}}_{\bm z^{\tau}}\partial^{\bm u^W}_{\bm z^W} \frac{\partial\widehat{X}_{T_k}}{\partial z_j^W} = \partial^{\bm u^{\tau}}_{\bm z^{\tau}}\partial^{\bm u^W}_{\bm z^W} \frac{\partial\widehat{X}_{T_k}}{\partial\widehat{X}_{T_j}}\frac{\partial\widehat{X}_{T_j}}{\partial z_j^W}\\
        & = \partial^{\bm u^{\tau}}_{\bm z^{\tau}}\partial^{\bm u^W}_{\bm z^W}\left( J_{k,j}\sigma_0 \sqrt{\Delta T_j}\right)\\
        & = \sigma_0\sum_{\bm u_1^{\tau} + \bm u_2^{\tau} = \bm u^{\tau}} \partial^{\bm u_1^{\tau}}_{\bm z^{\tau}}\partial^{\bm u^W}_{\bm z^W} J_{k,j} \partial^{\bm u_2^{\tau}}_{\bm z^{\tau}}\sqrt{\Delta T_j},
    \end{aligned}
\end{equation}
where the state Jacobian $J_{k,j}$ is defined as:
\begin{equation}
    J_{k,j}: =\frac{\partial \widehat X_{T_k}}{\partial\widehat{X}_{T_j}} = \prod_{i = j+1}^k (1 + \frac{\dd }{\dd x} b(\widehat{X}_{T_{i-1}})\Delta T_i).
\end{equation}
By \eqref{eq:sqrt_derivative}, we have $\partial^{\bm u_2^{\tau}}_{\bm z^{\tau}} \sqrt{\Delta T_j} \in \mathcal{C}_p(\R^{n+1})$. Thus, it suffices to prove $\partial^{\bm u_1^{\tau}}_{\bm z^{\tau}} \partial^{\bm u^W}_{\bm z^W} J_{k,j} \in \mathcal{C}_p(\R^{2n+2})$. Using the chain rule:
\begin{equation}\label{eq: deri J}
\begin{aligned}
    \partial^{\bm u_1^{\tau}}_{\bm z^{\tau}}\partial^{\bm u^W}_{\bm z^W} J_{k,j} &= \partial^{\bm u_1^{\tau}}_{\bm z^{\tau}}\partial^{\bm u^W}_{\bm z^W}\prod_{i = j+1}^k\left(1+ \frac{\dd }{\dd x}b(\widehat X_{T_{i-1}})\Delta T_i \right)\\
    & = \sum_{\bm v^{\tau}_{j+1}+\dots+ \bm v_{k}^{\tau} = \bm u^{\tau}_1} \sum_{\bm v^{W}_{j+1}+\dots+ \bm v_{k}^{W} = \bm u^{W}} \prod_{i = j+1}^k \partial_{\bm z^{\tau}}^{\bm v^{\tau}_i} \partial_{\bm z^W}^{\bm v_i^W}\left(1+ \frac{\dd }{\dd x}b(\widehat X_{T_{i-1}})\Delta T_i \right).
\end{aligned}
\end{equation}
Under the inductive hypothesis, and utilizing the boundedness of the derivatives of $b$ and Lemma \ref{lemma: deri delta t growth}, it follows that each factor in \eqref{eq: deri J} satisfies $\partial_{\bm z^{\tau}}^{\bm v^{\tau}_i} \partial_{\bm z^W}^{\bm v_i^W} \frac{\dd}{\dd x} b(\widehat X_{T_{i-1}}) \Delta T_i \in \mathcal{C}_p(\R^{2n+2})$. Consequently, combining \eqref{eq: deri x} and \eqref{eq: deri J}, we have $\partial^{\bm u^{\tau}}_{\bm z^{\tau}} \partial^{\bm u^W \cup \{j\}}_{\bm z^W} \widehat{X}_{T_k} \in \mathcal{C}_p(\R^{2n+2})$.

By the principle of induction, the mixed partial derivatives $\partial^{\bm u^{\tau}}_{\bm z^{\tau}} \partial^{\bm u^W}_{\bm z^W} \widehat{X}_{T_k}$ belong to $\mathcal{C}_p(\R^{2n+2})$ for all indices $\bm u^{\tau}, \bm u^W$ and $k$. This completes the proof.
\end{proof}

\subsection{Proof of Lemma \ref{lemma: deri diff growth}}\label{sec proof lemma: deri diff growth}

\begin{proof}
    We begin by applying the Leibniz rule to the expression in \eqref{eq: diff target}. Note that $(\Delta T_i)^{-1/2}$ depends only on $\bm z^\tau$, whereas the operator $\partial^{\bm u^W}_{\bm z^W}$ acts only on the function difference:
    \begin{equation} \label{eq: diff target leibniz}
    \begin{aligned}
        \partial^{\bm u^{\tau}}_{\bm z^{\tau}}\partial^{\bm u^W}_{\bm z^W}\left[ \frac{f(\widehat{X}_{T_i}) - f(\widehat{X}_{T_{i-1}})}{\sqrt{\Delta T_i}} \right] &= \sum_{\bm v \subseteq \bm u^{\tau}} 
        \Bigl[ \partial^{\bm u^{\tau} \setminus \bm v}_{\bm z^{\tau}} (\Delta T_i)^{-\frac{1}{2}} \Bigr] \\
        &\qquad \times \Bigl[ \partial^{\bm v}_{\bm z^{\tau}} \partial^{\bm u^W}_{\bm z^W} \bigl( f(\widehat{X}_{T_i}) - f(\widehat{X}_{T_{i-1}}) \bigr) \Bigr].
    \end{aligned}
    \end{equation}

    For the first factor on the right-hand side of \eqref{eq: diff target leibniz}, an expansion via the Fa\`{a} di Bruno formula yields:
    \begin{equation} \label{eq:time_part}
    \begin{aligned}
        \left|\partial^{\bm u^{\tau} \setminus \bm v}_{\bm z^{\tau}} (\Delta T_i)^{-\frac{1}{2}}\right| 
        &= \left|\sum_{w \in \pi(\bm u^{\tau} \setminus \bm v)} 
        C_{|w|} \, (\Delta T_i)^{-\frac{1}{2}-|w|} 
        \prod_{\lambda \in w} \partial^{\lambda}_{\bm z^{\tau}} (\Delta T_i)\right|\\
        & \le C \sum_{w \in \pi(\bm u^{\tau} \setminus \bm v)} (\Delta T_i)^{-\frac{1}{2} }  \prod_{\lambda \in w} \prod_{i \in \lambda} |z_i^{\tau}|
        \end{aligned}
    \end{equation}
    where $C_{|w|} = (-1)^{|w|} \frac{(2|w|-1)!!}{2^{|w|}}$. The final inequality follows from the growth bounds for $\Delta T_i$ established in Lemma \ref{lemma: deri delta t growth}.

    By the mixed version of the Fa\`{a} di Bruno formula, the term $\partial^{\bm v}_{\bm z^{\tau}} \partial^{\bm u^W}_{\bm z^W} f(\widehat{X}_{T_j})$ for $j=i$ or $i-1$ can be expressed as:
    \begin{equation} \label{eq: faa di mix f}
        \partial^{\bm v}_{\bm z^{\tau}} \partial^{\bm u^W}_{\bm z^W} f(\widehat{X}_{T_j})
        = \sum_{w \in \pi(\bm v \sqcup \bm u^W)} 
        f^{(|w|)}(\widehat{X}_{T_j}) 
        \prod_{D \in w} 
        \Bigl( \partial^{D \cap \bm v}_{\bm z^{\tau}} 
               \partial^{D \cap \bm u^W}_{\bm z^W} \widehat{X}_{T_j} \Bigr).
    \end{equation}
    where $\sqcup$ denotes the disjoint union, treating $\bm v$ and $\bm u^W$ as distinct sets of indices. To establish the lemma, it remains to prove that the following operator belongs to the function class $\mathcal{C}_p(\R^{2n+2})$:
    \begin{equation} \label{eq:diff_operator_I}
        \mathcal{I} := \left( \Delta T_i\right)^{-1/2}\partial^{\bm v}_{\bm z^{\tau}}\partial^{\bm u^W}_{\bm z^W}\left( f(\widehat{X}_{T_{i-1}} + \Delta X_i) -  f(\widehat{X}_{T_{i-1}})\right).
    \end{equation}
    Substituting \eqref{eq: faa di mix f} into \eqref{eq:diff_operator_I}, we obtain:
    \begin{equation} \label{eq:diff_I_expansion}
    \begin{aligned}
        \mathcal{I} = \sum_{w \in \pi(\bm v \sqcup \bm u^W)} 
             \left( \Delta T_i\right)^{-1/2}\bigg(& f^{(|w|)}(\widehat{X}_{T_{i-1}} + \Delta X_i) \prod_{D \in w} 
             \partial^{D \cap \bm v}_{\bm z^{\tau}} 
                   \partial^{D \cap \bm u^W}_{\bm z^W}   (\widehat{X}_{T_{i-1}} + \Delta X_i) \\
        &- f^{(|w|)}(\widehat{X}_{T_{i-1}}) \prod_{D \in w} 
            \Bigl( \partial^{D \cap \bm v}_{\bm z^{\tau}} 
                   \partial^{D \cap \bm u^W}_{\bm z^W} \widehat{X}_{T_{i-1}} \Bigr) \bigg),\\
        \end{aligned}
    \end{equation}
    where $ f^{(|w|)}$ is the $ |w|$-th derivative of $ f$. We show that each difference term in the summation belongs to $\mathcal{C}_p(\R^{2n+2})$. By adding and subtracting the cross-term $f^{(|w|)}(\widehat{X}_{T_{i-1}}) \prod_{D \in w} \partial^{D \cap \bm v}_{\bm z^{\tau}} \partial^{D \cap \bm u^W}_{\bm z^W} \widehat{X}_{T_i}$ within the summation, we have:
\begin{equation} \label{eq:diff_I_decoupled}
\begin{aligned}
     &\left( \Delta T_i\right)^{-1/2}\left( f^{(|w|)}(\widehat{X}_{T_{i}}) - f^{(|w|)}(\widehat{X}_{T_{i-1}}) \right) \prod_{D \in w} 
         \partial^{D \cap \bm v}_{\bm z^{\tau}} 
               \partial^{D \cap \bm u^W}_{\bm z^W}   \widehat{X}_{T_{i}} \\
    &+ f^{(|w|)}(\widehat{X}_{T_{i-1}}) \left( \Delta T_i\right)^{-1/2}\left( \prod_{D \in w} 
         \partial^{D \cap \bm v}_{\bm z^{\tau}} 
               \partial^{D \cap \bm u^W}_{\bm z^W}   (\widehat{X}_{T_{i}}) - \prod_{D \in w} 
         \partial^{D \cap \bm v}_{\bm z^{\tau}} 
               \partial^{D \cap \bm u^W}_{\bm z^W}   \widehat{X}_{T_{i-1}}\right) .
\end{aligned}
\end{equation}

    For the first term in \eqref{eq:diff_I_decoupled}, Lemma \ref{lemma: deri x growth} ensures that $\partial^{D \cap \bm v}_{\bm z^{\tau}} \partial^{D \cap \bm u^W}_{\bm z^W} \widehat{X}_{T_i} \in \mathcal{C}_p(\R^{2n+2})$. Thus, it suffices to bound the function difference. Let the state increment be defined as:
\begin{equation}\label{eq:increment_X}
    \Delta X_i = b(\widehat{X}_{T_{i-1}})\Delta T_i + \sigma_0\sqrt{\Delta T_i}\ z_i^W,
\end{equation}
    such that $\widehat{X}_{T_i} = \widehat{X}_{T_{i-1}} + \Delta X_i$. Since the derivatives of $f$ are bounded, $f^{(|w|)}$ is Lipschitz continuous. Consequently:
    \begin{equation}
    \begin{aligned}
        &\left|\left( \Delta T_i\right)^{-1/2}\left( f^{(|w|)}(\widehat{X}_{T_{i}}) - f^{(|w|)}(\widehat{X}_{T_{i-1}}) \right)\right| \\
        &= \left( \Delta T_i\right)^{-1/2}\left| f^{(|w|)}(\widehat{X}_{T_{i-1}}+\Delta X_i) - f^{(|w|)}(\widehat{X}_{T_{i-1}}) \right|\\
        &\le C\left( \Delta T_i\right)^{-1/2}\left| \Delta X_i\right|\\
        & \le  C\left( \Delta T_i\right)^{-1/2}\left| b(\widehat{X}_{T_{i-1}})\Delta T_i + \sigma_0\sqrt{\Delta T_i}\ z_i^W\right|\\
        &\le C(1 + |\widehat{X}_{T_{i-1}}|\sqrt{\Delta T_i} + \sigma_0z_i^W),
    \end{aligned}
    \end{equation}
    Applying Lemma \ref{lemma: deri x growth}, this term clearly exhibits polynomial growth.

    For the second term in \eqref{eq:diff_I_decoupled}, since $f^{(|w|)}$ is bounded, it suffices to verify that
 \begin{equation}\label{eq:second_term_expansion}
    \begin{aligned}
    &\left( \Delta T_i\right)^{-1/2}\left| \prod_{D \in w} \partial^{D \cap \bm v}_{\bm z^{\tau}} \partial^{D \cap \bm u^W}_{\bm z^W}  (\widehat{X}_{T_{i}}) - \prod_{D \in w} \partial^{D \cap \bm v}_{\bm z^{\tau}} \partial^{D \cap \bm u^W}_{\bm z^W}   \widehat{X}_{T_{i-1}}\right| \\
    & = \left( \Delta T_i\right)^{-1/2}\left| \prod_{D \in w} \partial^{D \cap \bm v}_{\bm z^{\tau}} \partial^{D \cap \bm u^W}_{\bm z^W}  (\widehat{X}_{T_{i-1}}+\Delta X_i) - \prod_{D \in w} \partial^{D \cap \bm v}_{\bm z^{\tau}} \partial^{D \cap \bm u^W}_{\bm z^W}   \widehat{X}_{T_{i-1}}\right|\\
    & = \left( \Delta T_i\right)^{-1/2}\left| \sum_{\emptyset \neq S \subseteq w} \left( \prod_{D \in S} \partial^{D \cap \bm v}_{\bm z^{\tau}} \partial^{D \cap \bm u^W}_{\bm z^W} \Delta X_i \right) \left( \prod_{D \in w \setminus S} \partial^{D \cap \bm v}_{\bm z^{\tau}} \partial^{D \cap \bm u^W}_{\bm z^W} \widehat{X}_{T_{i-1}} \right) \right|\\
    &\le \sum_{\emptyset \neq S \subseteq w}\left( \Delta T_i\right)^{-1/2}\left| \prod_{D \in S} \partial^{D \cap \bm v}_{\bm z^{\tau}} \partial^{D \cap \bm u^W}_{\bm z^W} \Delta X_i \right|\left| \prod_{D \in w \setminus S} \partial^{D \cap \bm v}_{\bm z^{\tau}} \partial^{D \cap \bm u^W}_{\bm z^W} \widehat{X}_{T_{i-1}} \right|.
    \end{aligned}
\end{equation}
    By Lemma \ref{lemma: deri x growth}, the derivatives of $\widehat{X}_{T_{i-1}}$ belong to $\mathcal{C}_p(\R^{2n+2})$. Thus, it remains to bound $\left( \Delta T_i\right)^{-1/2} \prod_{D \in S} |\partial^{D \cap \bm v}_{\bm z^{\tau}} \partial^{D \cap \bm u^W}_{\bm z^W} \Delta X_i|$. For any multi-indices $\bm v^{\tau}, \bm v^W$ such that $\bm v^{\tau} \cup \bm v^W \neq \emptyset$, we have:
\begin{equation}
\begin{aligned}
    \left|\partial^{\bm v^{\tau}}_{\bm z^{\tau}}\partial^{\bm v^{W}}_{\bm z^{W}} \Delta X_i\right|  &= \left|\partial^{\bm v^{\tau}}_{\bm z^{\tau}}\partial^{\bm v^{W}}_{\bm z^{W}}\left(b(\widehat{X}_{T_{i-1}})\Delta T_i + \sigma_0\sqrt{\Delta T_i}\ z_i^W \right)\right|\\
    & \le\sum_{\bm v_1^{\tau} + \bm v_2^{\tau} = \bm v^{\tau}}\left|\partial^{\bm v_1^{\tau}}_{\bm z^{\tau}}\partial^{\bm v^W}_{\bm z^W} b(\widehat{X}_{T_{i-1}})\right|\left| \partial^{\bm v_2^{\tau}}_{\bm z^{\tau}}\Delta T_i\right| +  \left|\sigma_0\partial^{\bm v^W}_{\bm z^W} z_i^W \partial^{\bm v^{\tau}}_{\bm z^{\tau}}\sqrt{\Delta T_i}\right|\\
    &\le C(\Delta T_i) \sum_{\bm v_1^{\tau} + \bm v_2^{\tau} = \bm v^{\tau}}\left|\partial^{\bm v_1^{\tau}}_{\bm z^{\tau}}\partial^{\bm v^W}_{\bm z^W} b(\widehat{X}_{T_{i-1}})\right| \prod_{j\in \bm v_2^{\tau}} |z_j^{\tau}|\\
    &\qquad+ C \sum_{w\in \pi(\bm v^{\tau})} (\Delta T_k)^{\frac{1}{2}}  \prod_{\lambda \in w}\prod_{i \in \lambda} |z_i^{\tau}|\left|\sigma_0\partial^{\bm v^W}_{\bm z^W} z_i^W  \right|,
\end{aligned}
\end{equation}
    where the final inequality follows from Lemma \ref{lemma: deri delta t growth} and \eqref{eq:sqrt_derivative}. Given that $\partial^{\bm v_1^{\tau}}_{\bm z^{\tau}}\partial^{\bm v^W}_{\bm z^W} b(\widehat{X}_{T_{i-1}}) \in \mathcal{C}_p(\R^{2n+2})$ (by Lemma \ref{lemma: deri x growth} and Assumption \ref{assum: smooth drift}), substituting this bound into \eqref{eq:second_term_expansion} confirms that the second term of \eqref{eq:diff_I_decoupled} also belongs to $\mathcal{C}_p(\R^{2n+2})$.
    This completes the proof.
\end{proof}


\begin{thebibliography}{10}

\bibitem{andersson2017Unbiased}
{\sc P.~Andersson and A.~{Kohatsu-Higa}}, {\em Unbiased simulation of stochastic differential equations using parametrix expansions}, Bernoulli, 23 (2017).

\bibitem{beskos2006Retrospective}
{\sc A.~Beskos, O.~Papaspiliopoulos, and G.~O. Roberts}, {\em Retrospective exact simulation of diffusion sample paths with applications}, Bernoulli, 12 (2006).

\bibitem{beskos2005Exact}
{\sc A.~Beskos and G.~O. Roberts}, {\em Exact simulation of diffusions}, The Annals of Applied Probability, 15 (2005), \url{https://arxiv.org/abs/math/0602523}.

\bibitem{caflisch1997Valuation}
{\sc R.~E. Caflisch, W.~Morokoff, and A.~Owen}, {\em Valuation of mortgage-backed securities using {{Brownian}} bridges to reduce effective dimension}, Journal of Computational Finance, 1 (1997), pp.~27--46.

\bibitem{david2003orderstatistics}
{\sc H.~A. David and H.~N. Nagaraja}, {\em Order Statistics}, Wiley, Hoboken, NJ, 3~ed., 2003.

\bibitem{dick2010}
{\sc J.~Dick and F.~Pillichshammer}, {\em Digital Nets and Sequences. Discrepancy Theory and Quasi–Monte Carlo Integration}, Cambridge University Press, Cambridge, 2010.

\bibitem{doumbia2017Unbiased}
{\sc M.~Doumbia, N.~Oudjane, and X.~Warin}, {\em Unbiased {{Monte Carlo}} estimate of stochastic differential equations expectations}, ESAIM: Probability and Statistics, 21 (2017), pp.~56--87.

\bibitem{gallager1996discrete}
{\sc R.~G. Gallager}, {\em Discrete Stochastic Processes}, Kluwer Academic Publishers, 1996.

\bibitem{giles2009Multilevela}
{\sc M.~B. Giles and B.~J. Waterhouse}, {\em Multilevel quasi-{{Monte Carlo}} path simulation}, in Advanced {{Financial Modelling}}, H.~Albrecher, W.~J. Runggaldier, and W.~Schachermayer, eds., Walter de Gruyter, Oct. 2009, pp.~165--182.

\bibitem{glasserman2004}
{\sc P.~Glasserman}, {\em {M}onte {C}arlo Methods in Financial Engineering}, Springer, New York, 2004.

\bibitem{he2014Good}
{\sc Z.~He and X.~Wang}, {\em Good path generation methods in quasi-{{Monte Carlo}} for pricing financial derivatives}, SIAM Journal on Scientific Computing, 36 (2014), pp.~B171--B197.

\bibitem{he2015convergence}
{\sc Z.~He and X.~Wang}, {\em On the convergence rate of randomized quasi--{{Monte Carlo}} for discontinuous functions}, SIAM Journal on Numerical Analysis, 53 (2015), pp.~2488--2503.

\bibitem{he2015}
{\sc Z.~He and X.~Wang}, {\em On the convergence rate of randomized quasi--{M}onte {C}arlo for discontinuous functions}, SIAM Journal on Numerical Analysis, 53 (2015), pp.~2488--2503.

\bibitem{henry-labordere2017Unbiased}
{\sc P.~{Henry-Labord{\`e}re}, X.~Tan, and N.~Touzi}, {\em Unbiased simulation of stochastic differential equations}, The Annals of Applied Probability, 27 (2017).

\bibitem{Hlawka1972}
{\sc E.~Hlawka and R.~Mück}, {\em Über eine transformation von gleichverteilten folgen {II}}, Computing, 9 (1972), pp.~127--138.

\bibitem{kloeden1992numerical}
{\sc P.~E. Kloeden and E.~Platen}, {\em Numerical {{Solution}} of {{Stochastic Differential Equations}}}, Springer Berlin Heidelberg, Berlin, Heidelberg, 1992.

\bibitem{kuo2016Application}
{\sc F.~Y. Kuo and D.~Nuyens}, {\em Application of quasi-{{Monte Carlo Methods}} to elliptic {{PDEs}} with random diffusion coefficients: A survey of analysis and implementation}, Foundations of Computational Mathematics, 16 (2016), pp.~1631--1696.

\bibitem{kuo2010}
{\sc F.~Y. Kuo, I.~H. Sloan, G.~W. Wasilkowski, and B.~J. Waterhouse}, {\em Randomly shifted lattice rules with the optimal rate of convergence for unbounded integrands}, Journal of Complexity, 26 (2010), pp.~135--160.

\bibitem{kuo2006a}
{\sc F.~Y. Kuo, G.~W. Wasilkowski, and B.~J. Waterhouse}, {\em Randomly shifted lattice rules for unbounded integrands}, Journal of Complexity, 22 (2006), pp.~630--651.

\bibitem{mcleish2011general}
{\sc D.~McLeish}, {\em A general method for debiasing a {{Monte Carlo}} estimator}, Monte Carlo Methods and Applications, 17 (2011).

\bibitem{Niederreiter1992}
{\sc H.~Niederreiter}, {\em Random Number Generation and Quasi-Monte Carlo Methods}, SIAM, Philadelphia, PA, 1992.

\bibitem{ouyang2024achieving}
{\sc D.~Ouyang, X.~Wang, and Z.~He}, {\em Achieving high convergence rates by quasi-monte carlo and importance sampling for unbounded integrands}, SIAM Journal on Numerical Analysis, 62 (2024), pp.~2393--2414.

\bibitem{owen1995}
{\sc A.~B. Owen}, {\em Randomly permuted (t,m,s)-nets and (t, s)-sequences}, in Monte Carlo and Quasi-Monte Carlo Methods in Scientific Computing, H.~Niederreiter and P.~J.-S. Shiue, eds., New York, NY, 1995, Springer New York, pp.~299--317.

\bibitem{owen1997b}
{\sc A.~B. Owen}, {\em Scrambled net variance for integrals of smooth functions}, The Annals of Statistics, 25 (1997), pp.~1541--1562.

\bibitem{owen2006a}
{\sc A.~B. Owen}, {\em Halton sequences avoid the origin}, SIAM Review, 48 (2006), pp.~487--503.

\bibitem{owen2008}
{\sc A.~B. Owen}, {\em Local antithetic sampling with scrambled nets}, The Annals of Statistics, 36 (2008), pp.~2319--2343.

\bibitem{owen2013}
{\sc A.~B. Owen}, {\em Monte Carlo Theory, Methods and Examples}, Stanford, 2013.

\bibitem{pyke1965spacings}
{\sc R.~Pyke}, {\em Spacings}, Journal of the Royal Statistical Society. Series B (Methodological), 27 (1965), pp.~395--449.

\bibitem{rhee2015Unbiased}
{\sc C.-H. Rhee and P.~W. Glynn}, {\em Unbiased estimation with square root convergence for {{SDE}} models}, Operations Research, 63 (2015), pp.~1026--1043.

\bibitem{sloan1998When}
{\sc I.~H. Sloan and H.~Wo{\'z}niakowski}, {\em When are quasi-{Monte Carlo} algorithms efficient for high dimensional integrals?}, Journal of Complexity, 14 (1998), pp.~1--33.

\bibitem{song2021ScoreBased}
{\sc Y.~Song, J.~{Sohl-Dickstein}, D.~P. Kingma, A.~Kumar, S.~Ermon, and B.~Poole}, {\em Score-based generative modeling through stochastic differential equations}, Feb. 2021, \url{https://doi.org/10.48550/arXiv.2011.13456}, \url{https://arxiv.org/abs/2011.13456}.

\bibitem{vihola2018Unbiased}
{\sc M.~Vihola}, {\em Unbiased estimators and multilevel monte carlo}, Operations Research, 66 (2018), pp.~448--462.

\bibitem{wang2020a}
{\sc H.~Wang, T.~Zariphopoulou, and X.~Y. Zhou}, {\em Reinforcement learning in continuous time and space: A stochastic control approach}, Journal of Machine Learning Research, 21 (2020), pp.~1--34.

\bibitem{wang2006Effects}
{\sc X.~Wang}, {\em On the effects of dimension reduction techniques on some high-dimensional problems in finance}, Operations Research, 54 (2006), pp.~1063--1078.

\bibitem{wang2009Dimension}
{\sc X.~Wang}, {\em Dimension reduction techniques in quasi-{Monte Carlo} methods for option pricing}, INFORMS Journal on Computing,  (2009).

\bibitem{wang2016Handling}
{\sc X.~Wang}, {\em Handling discontinuities in financial engineering: Good path simulation and smoothing}, Operations Research, 64 (2016), pp.~297--314.

\bibitem{wang2003effectivea}
{\sc X.~Wang and K.-T. Fang}, {\em The effective dimension and quasi-{{Monte Carlo}} integration}, Journal of Complexity, 19 (2003), pp.~101--124.

\bibitem{wang2010QuasiMonte}
{\sc X.~Wang and I.~H. Sloan}, {\em Quasi-{{Monte Carlo}} methods in financial engineering: An equivalence principle and dimension reduction}, Operations Research,  (2010).

\bibitem{wang2012Pricing}
{\sc X.~Wang and K.~S. Tan}, {\em Pricing and hedging with discontinuous functions: Quasi--{{Monte Carlo}} methods and dimension reduction}, Management Science,  (2012).

\bibitem{YZbook}
{\sc J.~Yong and X.~Y. Zhou}, {\em Stochastic {C}ontrols: {H}amiltonian {S}ystems and {HJB} {E}quations}, New York, NY: Spinger, 1999.

\end{thebibliography}
\end{document}